\theoremstyle{plain}
\newtheorem{theorem}{Theorem}[section]
\newtheorem{proposition}{Proposition}[section]
\newtheorem{corollary}{Corollary}[section]
\newtheorem{lemma}{Lemma}[section]
\newtheorem{definition}{Definition}[section]
\theoremstyle{remark}
\newtheorem{remark}{Remark}[section]
\newtheorem{examples}{Examples}[section]
\newtheorem{assumption}{Assumption}[section]
\DeclareMathOperator*{\essinf}{ess\,inf}
\DeclareMathOperator{\supp}{supp}
\DeclareMathOperator{\diam}{diam}
\DeclareMathOperator{\diag}{diag}
\DeclareMathOperator{\diverg}{div}
\DeclareMathOperator{\im}{Im}
\DeclareMathOperator{\id}{id}
\DeclareMathOperator{\ext}{Ext}
\DeclareMathOperator{\D}{\mathcal{D}}
\DeclareMathOperator{\E}{\mathcal{E}}
\begin{document}

\title{Approximation of partial differential equations on compact resistance spaces}

\author{Michael Hinz$^1$, Melissa Meinert$^2$}
\thanks{$^1$, $^2$ Research supported in part by the DFG IRTG 2235: 'Searching for the regular in the irregular: Analysis of singular and random systems'.}
\address{$^1$Fakult\"at f\"ur Mathematik, Universit\"at Bielefeld, Postfach 100131, 33501 Bielefeld, Germany}
\email{mhinz@math.uni-bielefeld.de}
\address{$^2$Fakult\"at f\"ur Mathematik, Universit\"at Bielefeld, Postfach 100131, 33501 Bielefeld, Germany}
\email{mmeinert@math.uni-bielefeld.de}

\begin{abstract}
We consider linear partial differential equations on resistance spaces that are uniformly elliptic and parabolic in the sense of quadratic forms and involve abstract gradient and divergence terms. Our main interest is to provide graph and metric graph approximations for their unique solutions. For families of equations with different coefficients on a single compact resistance space we prove that solutions have accumulation points with respect to the uniform convergence in space, provided that the coefficients remain bounded. If in a sequence of equations the coefficients converge suitably, the solutions converge uniformly along a subsequence. For the special case of local resistance forms on finitely ramified sets we also consider sequences of resistance spaces approximating the finitely ramified set from within. Under suitable assumptions on the coefficients (extensions of) linearizations of the solutions of equations on the approximating spaces accumulate or even converge uniformly along a subsequence to the solution of the target equation on the finitely ramified set. The results cover discrete and metric graph approximations, and both are discussed. 
\tableofcontents
\end{abstract}

\keywords{Closed forms, semigroups, (metric) graphs, fractals, vector fields, varying spaces, Mosco convergence}
\subjclass[2010]{28A80, 35H99, 35J20, 35K10, 35R02, 47A07}

\maketitle

\section{Introduction}

For several classes of fractal spaces, such as for instance p.c.f. self-similar sets, \cite{BarlowPerkins, Ki89, Ki93, Ki93b, Ki01, Ku89, Ku93}, classical Sierpinski carpets, \cite{BB89, BBKT10}, certain Julia sets, \cite{RogersTeplyaev10}, Laaks\o~ spaces, \cite{Steinhurst}, diamond lattice type fractals, \cite{AkkermansDunneTeplyaev, Alonso, HamblyKumagai}, and certain random fractals, \cite{Hambly92, Hambly97}, the existence of resistance forms in the sense of \cite{Ki03, Ki12} has been proved. This allows to establish a Dirichlet form based analysis, \cite{BH91, FOT94, MaRoeckner92}, with respect to a given volume measure, and in particular, studies of partial differential equations on fractals, \cite{Ba, Ki01, Str06}. These results and many later developments based on them are motivated by a considerable body of modern research in physics suggesting that in specific situations fractal models may be much more adequate than classical ones. The difficulty in this type of analysis comes from the fact that on fractals many tools from traditional calculus (and even many tools used in the modern theory of metric measure spaces, see e.g., \cite{Gigli15, Gigli17}) are not available. 

For fractal counterparts of equations of linear second order equations, \cite{Evans, GT10}, that do not involve first order terms - such as Poisson or heat equations for Laplacians - many results are known, \cite{Ba, Ka12, Ki01, Ki08, Str06}, and there are also studies of related semilinear equations without first order terms, \cite{FaHu99, FaHu01}. More recently fractal counterparts for equations involving first order terms have been suggested, \cite{HRT13, HT13, HTams, HinzKochMeinert}, and a few more specific results have been obtained, see for instance \cite{HinzMeinert19+, LiuQian17}. The discussion of first order terms is of rather abstract nature, because on most fractals there is no obvious candidate for a gradient operator; instead, it has to be constructed from a given bilinear form in a subsequent step, \cite{CS03, CS09, HRT13, IRT12}. (An intuitive argument why this construction cannot be trivial is the fact that for self-similar fractals, endowed with natural Hausdorff type volume measures, volume and energy are typically singular, \cite{BBST99, Hino05,Hino08,Hino10}.) For a study of, say, counterparts of second order equations, \cite[Section 8]{GT10}, involving abstract gradient and divergence terms, it therefore seems desirable to establish results which indicate that the equations have the correct physical meaning. 

In this article we consider analogs of linear elliptic and parabolic equations with first order terms on locally compact separable resistance spaces, \cite{Ki03, Ki12}. We wish to point out that we use the word 'elliptic' in a very broad (quadratic form) sense - the principal parts of our operators should rather be seen as fractal generalizations of hypoelliptic operators.
Under suitable assumptions the equations admit unique weak respectively semigroup solutions, Corollaries \ref{C:solutionelliptic} and \ref{C:solutionparabolic}. We prove that if the resistance space is compact and we are given bounded sequences of coefficients, the corresponding solutions have uniform accumulation points, Corollary \ref{C:KigamiAA}. If the sequences of coefficients converge, then the corresponding solutions converge in the $L^2$-sense and uniformly along subsequences, Theorem \ref{T:approxsame}. For certain local resistance forms on finitely ramified sets, \cite{IRT12, T08}, we introduce an approximation scheme along varying spaces, general enough to accommodate both discrete and metric graph approximations. If the coefficients are bounded in a suitable manner, extensions of linearizations of solutions to the equations on the approximating spaces have uniform accumulation points on the target space, Corollary \ref{C:KSaccu}. If the coefficients are carefully chosen, the solutions converge in an $L^2$-sense and the mentioned extensions converge uniformly along subsequences, Theorem \ref{T:KS_spectral}. Combining these results, we obtain an approximation for more general coefficients, Theorem \ref{T:general}. 

For resistance forms on discrete and metric graphs the abstract gradient operators admit more familiar expressions, Examples \ref{Ex:discrete1} and \ref{Ex:metric1}, and the bilinear forms associated with linear equations can be understood in terms of the well-known analysis on graphs and metric graphs, Examples \ref{Ex:discrete2} and \ref{Ex:metric2}, see for instance \cite{Grigoryan, KLWbook} and \cite{Mugnolo14, Post12}. The approximation scheme itself is of first order in the sense that it relies on the use of piecewise linear respectively piecewise harmonic functions, and it resembles familiar finite element methods. One motivation to use this approach is that pointwise restrictions of piecewise harmonic functions on, say, the Sierpinski gasket, are of finite energy on approximating metric graphs, \cite{HinzMeinert19+}, but for general energy finite functions on the Sierpinski gasket this is not true - the corresponding trace spaces on the metric graphs are fractional Sobolev spaces of order less than one, see for instance \cite{Str16} and the reference cited there (and \cite{HinoKumagai06} for related results). Of course first order approximations have a certain scope and certain limitations. But keeping these in mind, we can certainly view our results as a strong first indication that the abstractly formulated equations on the target space have the desired physical meaning, because their solutions appear as natural limits of solutions to similar equations on more familiar geometries, where they are better understood. The established approximation scheme also provides a computational tool which could be used for numerical simulations. Our results hold under rather minimal assumptions on the volume measure on the target space. For instance, in the situation of p.c.f. self-similar structures it is not necessary to specialize to self-similar Hausdorff measures, \cite{Ba, Ki01}, or to energy dominant Kusuoka type measures, \cite{Ku89, Ku93, Hino08, Hino10}.

In \cite[Section 6]{StrU00} a finite element method for a Poisson type equation on p.c.f. self-similar fractals was discussed, and the use of an equivalent scalar product and a related orthogonal projection made it possible to regard the approximation itself as the solution of a closely related equation. For equations involving divergence and gradient terms one cannot hope for a similarly simple mechanism. On the other hand, the construction of resistance forms itself is based on discrete approximations, \cite{Ki89,Ki93, Ki93b, Ki01}, and in symmetric respectively self-adjoint situations this can be used to obtain approximation results on the level of resistance forms, \cite{Croydon}, or Dirichlet forms, \cite{PostSimmer17, PostSimmer18}. In the latter case the dynamics of a partial differential equation of elliptic or parabolic type for self-adjoint operators comes into play, and it can be captured using spectral convergence results, \cite{Mosco94, RS80}, possibly along varying Hilbert spaces, \cite{KuwaeShioya03, Post12}. The equations we have in mind are governed by operators that are not necessarily symmetric, but under some conditions on the coefficients they are still sectorial, \cite{Kato80, MaRoeckner92}. This leads to the question of how to implement similar types on convergences for sectorial operators, and one arrives to a situation similar to those in \cite{MugnoloNittkaPost13} or \cite{Suzuki18}. The main difficulty is how to correctly implement the convergence of drift and divergence terms. With \cite{HinzMeinert19+, PostSimmer17, PostSimmer18} in mind a first reflex might be to try to verify a type of generalized norm resolvent convergence as in \cite{MugnoloNittkaPost13}, and to do so the first order terms would have to fit the estimate in their Definition 2.3, where particular (2.7e) is critical. For convergent sequences of drift and divergence coefficients, \cite{HRT13}, on a single resistance space one can establish this estimate with trivial identification operators (as adressed in their Example 2.5), but in the case of varying spaces the interaction of identification operators with the first order calculus seems too difficult to handle. The convergence results in \cite[Section 4]{Suzuki18} use the variational convergence studied in \cite{Toelle06, Toelle10}, which generalizes the Mosco type convergence, \cite{Mosco94}, for generalized forms, \cite{Hino98}, to the setup of varying Hilbert spaces, \cite{KuwaeShioya03}, and encodes a generalization of strong resolvent convergence. Also in the present article this variational convergence is used as a key tool: We verify the adequate Mosco type convergence along varying spaces of the bilinear forms associated with the equations, and by \cite{Hino98} and \cite{Toelle06, Toelle10} we can then conclude the $L^2$-type convergence of the solutions, see Theorem \ref{T:convergence of forms}. A significant difference between \cite[Section 4]{Suzuki18} and our results is the way the first order terms are handled. There the approach from \cite{AmbrosioStraTrevisan17} is used, which relies heavily on having a carr\'e du champ operator, \cite{BH91}. But this is an assumption which we wish to avoid, because - as mentioned above - interesting standard examples do not satisfy it. The target spaces for the approximation result along varying spaces that we implement are assumed to be finitely ramified sets, \cite{IRT12, T08}, endowed with local regular 
resistance forms, \cite{Ki03, Ki12}, satisfying certain assumptions. This class of fractals contains many interesting examples, \cite{Hambly92, Hambly97, HamblyNyberg03, Meyers, RogersTeplyaev10}, and in particular, p.c.f. self-similar fractals with  regular harmonic structures, \cite{Ki01}, but it does not contain Sierpinski carpets, \cite{BB89, BBKT10}. The cell structure of a finitely ramified set allows a transparent use of identification operators based on piecewise harmonic functions. The key property of resistance spaces that energy finite functions are continuous compensates the possible energy singularity of a given volume measure to a certain extent, in particular, we can use an inequality originally shown in \cite{HR16} when handling the first order terms in the presence of an energy singular measure. Uniform energy bounds and the compactness of the space then allow to use Arzela-Ascoli type arguments to obtain subsequential limits in the sense of uniform convergence. Together with the $L^2$-type limit statements produced by the variational convergence these limit points are then identified to be the solutions on the target space. 

The use of variational convergence schemes to study dynamical phenomena on certain geometries is a well-established idea, see for instance \cite{Kasue02, Kasue06, KuwaeShioya03}. It was already a guiding theme in \cite{Mosco94}, and related results in different setups have been studied in a number of recent articles, see for instance \cite{AmbrosioHonda17, ExnerPost09, Hinz09, HinzTeplyaev15, Kolesnikov05, Kolesnikov06, MugnoloNittkaPost13, Post06, Post12, PostSimmer17, PostSimmer18, Suzuki18}. For fractal spaces variational schemes can provide a certain counterpart to homogenization: In the latter the effect of a complicated microstructure can be encoded in an equation for an effective material if the problem is viewed at a certain mesoscopic scale. In analysis on fractals it may not be possible to find such a scale, and it is desirable to have a more direct understanding of how the microstructure determines analysis. This
typically leads to non-classical rescalings when passing from discrete to continuous or from smooth to fractal. Although the present study is written specifically for resistance spaces, some aspects of the approximation scheme in Section \ref{S:varying} might also provide some guidance for schemes along varying spaces for non-symmetric local or non-local operators on non-resistance spaces.

In Section \ref{S:resistanceforms} we recall basics from the theory of resistance forms and explain items of the related first order calculus. We discuss bilinear forms including drift and divergence terms in Section \ref{S:linearequations}, and follow standard methods, \cite{Evans, FiKu04, GT10}, to state existence, uniqueness and energy estimates for weak solutions to elliptic equations and (semigroup) solutions to parabolic equations. In Section \ref{S:single} we prove convergence results for equations on a single compact resistance space. We first discuss suitable conditions on the coefficients, then accumulation points and then strong resolvent convergence. Section \ref{S:varying} contains the approximation scheme along varying spaces for finitely ramified sets. We first state the basic assumptions and record some immediate consequences, then survey conditions on the coefficients and finally state the accumulation and convergence results. Section \ref{S:examples} discusses discrete approximations (Subsection \ref{SS:discretegraphs}), including classes of examples, metric graph approximations (Subsection \ref{SS:metricgraphs}), and short remarks on possible generalizations. Section \ref{S:restriction} contains an auxiliary result on the restriction of vector fields for finitely ramified sets.

We follow the habit to write $\mathcal{E}(u)$ for $\mathcal{E}(u,u)$ if $\mathcal{E}$ is a bilinear quantity depending on two arguments and both arguments are the same.

\section*{Acknowledgements}

We thank Olaf Post, Jan Simmer, Alexander Teplyaev and Jonas T\"olle for helpful and inspiring conversations.

\section{Resistance forms and first order calculus}\label{S:resistanceforms}

We recall the definition of resistance form, due to Kigami, see \cite[Definition 2.3.1]{Ki01} or \cite[Definition 2.8]{Ki03}. By $\ell(X)$ we denote the space of real valued functions on a set $X$.
\begin{definition}\label{D:resistanceform}
A resistance form $(\mathcal{E},\mathcal{F})$ on a set $X$ is a pair such that
\begin{enumerate}
\item[(i)] $\mathcal{F}\subset \ell(X)$ is a linear subspace of $\ell(X)$ containing the constants and $\mathcal{E}$ is a non-negative definite symmetric bilinear form on $\mathcal{F}$ with $\mathcal{E}(u)=0$ if and only if $u$ is constant.
\item[(ii)] Let $\sim$ be the equivalence relation on $\mathcal{F}$ defined by $u \sim v$ if and only if
$u-v$ is constant on $X$. Then $(\mathcal{F}/\sim, \mathcal{E})$ is a Hilbert space.
\item[(iii)] If $V\subset X$ is finite and $v\in\ell(V)$ then there is a function $u\in\mathcal{F}$ so that $u\,\bigr|_{V}=v$.
\item[(iv)] For $x,y\in X$
\begin{equation*}
	R(x,y):=\sup\Bigl\{ \frac{(u(x)-u(y))^{2}}{\mathcal{E}(u)}:u\in\mathcal{F}, \mathcal{E}(u)>0\Bigr\}<\infty.
\end{equation*}
\item[(v)] If $u\in\mathcal{F}$ then $\bar{u}:=\max(0,\min(1,u(x)))\in\mathcal{F}$ and $\mathcal{E}(\bar{u})\leq\mathcal{E}(u)$.
\end{enumerate}
\end{definition}
To $R$ one refers as the \emph{resistance metric} associated with $(\mathcal{E},\mathcal{F})$, \cite[Definition 2.11]{Ki03}, and to the pair $(X,R)$, which forms a metric space, \cite[Proposition 2.10]{Ki03}, we refer as \emph{resistance space}. All functions $u\in\mathcal{F}$ are continuous on $X$ with respect to the resistance metric, more precisely, we have 
\begin{equation}\label{E:resistanceest}
|u(x)-u(y)|^2\leq R(x,y)\mathcal{E}(u),\quad u\in\mathcal{F},\quad x,y\in X.
\end{equation}
For any finite subset $V\subset X$ the restriction of $(\mathcal{E},\mathcal{F})$ to $V$ is the resistance form $(\mathcal{E}_{V}, \ell(V)) $ defined by
\begin{equation}\label{E:EV}	
	\mathcal{E}_{V}(v)=\inf\Bigl\{\mathcal{E}(u):u\in\mathcal{F}, u\,\bigr|_{V}=v\Bigr\},
	\end{equation}
where a unique infimum is achieved. If $V_{1}\subset V_{2}$ and both are finite, then $(\mathcal{E}_{V_{2}})_{V_{1}}=\mathcal{E}_{V_{1}}$.

We assume $X$ is a nonempty set and $(\mathcal{E},\mathcal{F})$ is a resistance form on $X$ so that $(X,R)$ is  \emph{separable}. Then  there exists a sequence $(V_m)_m$ of finite subsets $V_m\subset X$ with $V_m\subset V_{m+1}$, $m\geq 1$, and $\bigcup_{m\geq 0} V_m$ dense in $(X,R)$. For any such sequence $(V_m)_m$ we have 
\begin{equation}\label{E:limitform}
\mathcal{E}(u)=\lim_{m}\mathcal{E}_{V_{m}}(u),\quad  u\in\mathcal{F},
\end{equation}
as proved in \cite[Proposition 2.10 and Theorem 2.14]{Ki03}. Note that for any $u\in\mathcal{F}$ the sequence $(\mathcal{E}_{V_{m}}(u))_m$ is non-decreasing. Each $\mathcal{E}_{V_m}$ is of the form 
\begin{equation}\label{E:approxbydiscreteforms}
\mathcal{E}_{V_m}(u)=\frac{1}{2}\sum_{p\in V_m}\sum_{q\in V_m}c(m;p,q)(u(p)-u(q))^2,\quad u\in\mathcal{F},
\end{equation}
with constants $c(m;p,q)\geq 0$ symmetric in $p$ and $q$.
 
We further assume that $(X,R)$ is \emph{locally compact} and that $(\mathcal{E},\mathcal{F})$ is \emph{regular}, i.e., that the space $\mathcal{F}\cap C_c(X)$ is uniformly dense in the space $C_c(X)$ of continuous compactly supported functions on $(X,R)$, see \cite[Definition 6.2]{Ki12}. Definition \ref{D:resistanceform} (v) implies that $\mathcal{F}\cap C_c(X)$ is an algebra under pointwise multiplication and
\begin{equation}\label{E:pointwisemult}
\mathcal{E}(fg)^{1/2}\leq \left\|f\right\|_{\sup}\mathcal{E}(g)^{1/2}+\left\|g\right\|_{\sup}\mathcal{E}(f)^{1/2},\quad f,g\in\mathcal{F}\cap C_c(X),
\end{equation}
see \cite[Lemma 6.5]{Ki12}.

To introduce the first order calculus associated with $(\mathcal{E},\mathcal{F})$, let $\ell_a(X\times X)$ denote the space of all real valued antisymmetric functions on $X\times X$ and write
\begin{equation}\label{E:action}
(g\cdot v)(x,y):=\overline{g}(x,y)v(x,y),\quad x,y\in X, 
\end{equation}
for any $v\in \ell_a(X\times X)$ and $g\in C_c(X)$, where 
\[\overline{g}(x,y):=\frac12(g(x)+g(y)).\] 
Obviously $g\cdot v \in \ell_a(X\times X)$, and (\ref{E:action}) defines an action of $C_c(X)$ on $\ell_a(X\times X)$, turning it into a module. By $d_u:\mathcal{F}\cap C_c(X)\to \ell_a(X\times X)$ we denote the universal derivation, 
\begin{equation}\label{E:universalder}
d_uf(x,y):=f(x)-f(y), \quad x,y\in X,
\end{equation}
and by 
\begin{equation}\label{E:Omega}
\Omega_a^1(X):=\left\lbrace \sum_i g_i \cdot d_u f_i: g_i\in C_c(X), f_i \in \mathcal{F}\cap C_c(X)\right\rbrace,
\end{equation}
deviating slightly from the notation used in \cite{HinzMeinert19+}, the submodule of $\ell_a(X\times X)$ of finite linear combinations of functions of form $g\cdot d_uf$. A quick calculation shows that for $f,g\in \mathcal{F}\cap C_c(X)$ we have $d_u(fg)=f\cdot d_ug +g\cdot d_uf$.

On $\Omega_a^1(X)$ we can introduce a symmetric nonnegative definite bilinear form $\left\langle\cdot,\cdot\right\rangle_{\mathcal{H}}$ by extending 
\begin{equation}\label{E:scalarprod}
\left\langle g_1\cdot d_uf_1, g_2\cdot d_uf_2\right\rangle_{\mathcal{H}}:=\lim_{m\to\infty} \frac{1}{2}\sum_{p\in V_m}\sum_{q\in V_m} c(m;p,q) \overline{g_1}(p,q)\overline{g_2}(p,q)d_uf_1(p,q)d_uf_2(p,q)
\end{equation}
linearly in both arguments, respectively, and we write $\left\|\cdot\right\|_{\mathcal{H}}=\sqrt{\left\langle\cdot,\cdot\right\rangle_{\mathcal{H}}}$ for the associated Hilbert seminorm. In Proposition \ref{P:indep} below we will verify that the definition of $\left\langle\cdot,\cdot\right\rangle_{\mathcal{H}}$ does not depend on the choice of the sequence $(V_m)_m$.

 We factor $\Omega_a^1(X)$ by the elements of zero seminorm and obtain the space $\Omega^1_a(X)/\ker \left\|\cdot\right\|_{\mathcal{H}}$. Given an element $\sum_i g_i \cdot d_uf_i$ of $\Omega_a^1(X)$ we write $\big[\sum_i g_i\cdot d_uf_i\big]_\mathcal{H}$ to denote its equivalence class. Completing $\Omega^1_a(X)/\ker \left\|\cdot\right\|_{\mathcal{H}}$ with respect to $\left\|\cdot\right\|_{\mathcal{H}}$ we obtain a Hilbert space $\mathcal{H}$, we refer to it as the \emph{space of generalized $L^2$-vector fields associated with $(\mathcal{E},\mathcal{F})$}. This is a version of a construction introduced in \cite{CS03, CS09} and studied in \cite{BK16, HR16, HRT13, HT13, HTams, HT-fgs5, IRT12, LiuQian17}, see also the related sources \cite{Eb99, Gigli15, Gigli17, W00}. The basic idea is much older, see for instance \cite[Exercise 5.9]{BH91}, it dates back to ideas of Mokobodzki and LeJan.

The action (\ref{E:action}) induces an action of $C_c(X)$ on $\mathcal{H}$: Given $v\in \mathcal{H}$ and $g\in C_c(X)$, let $(v_n)_n\subset \Omega_a^1(X)$ be such that $\lim_n [v_n]_\mathcal{H}=v$ in $\mathcal{H}$ and define $g\cdot v \in \mathcal{H}$ by 
$g\cdot v:=\lim_n [g\cdot v_n]_\mathcal{H}$. Since (\ref{E:action}) and (\ref{E:scalarprod}) imply 
\begin{equation}\label{E:boundedaction}
\left\|g\cdot v\right\|_{\mathcal{H}}\leq \left\|g\right\|_{\sup}\left\|v\right\|_{\mathcal{H}},
\end{equation}
it follows that the definition of $g\cdot v$ is correct. Given $f\in\mathcal{F}\cap C_c(X)$, we denote the $\mathcal{H}$-equivalence class of the universal derivation $d_uf$ as in (\ref{E:universalder}) by $\partial f$. By the preceding discussion we observe $[g\cdot d_uf]_{\mathcal{H}}=g\cdot\partial f$ for all $f\in \mathcal{F}\cap C_c(X)$ and $g\in C_c(X)$. It also follows that the map $f\mapsto \partial f$ defines a derivation operator 
\[\partial: \mathcal{F}\cap C_c(X)\to\mathcal{H}\] 
which satisfies the identity $\left\|\partial f\right\|_{\mathcal{H}}^2=\mathcal{E}(f)$ for any $f\in \mathcal{F}\cap C_c(X)$ and the Leibniz rule $\partial(fg)=f\cdot \partial g+g\cdot \partial f$ for any $f,g\in \mathcal{F}\cap C_c(X)$.

To show the independence of $\left\langle\cdot,\cdot\right\rangle_{\mathcal{H}}$ of the choice of the sequence $(V_m)_m$ in 
(\ref{E:scalarprod}) and to formulate later statements, we consider energy measures. For $f\in \mathcal{F}\cap C_c(X)$ there is a unique finite Radon measure $\nu_f$ on $X$ satisfying
\begin{equation}\label{E:energymeasure}
\int_Xg\:d\nu_f = \mathcal{E}(fg,f)-\frac12\mathcal{E}(f^2,g),\quad g\in\mathcal{F}\cap C_c(X),
\end{equation}
the \emph{energy measure of $f$}, see for instance \cite{Ka12, Ku93, Ki08, T08} and or \cite{FOT94, Hino03, Hino05, Hino08, Hino10, Hinz16}. It is not difficult to see that for any $f\in \mathcal{F}\cap C_c(X)$ and $g\in C_c(X)$ we have
\begin{equation}\label{E:energymeasureapprox}
\int_Xg\:d\nu_f=\frac12\lim_{m\to\infty}\sum_{p\in V_m}\sum_{q\in V_m}c(m;p,q)g(p)(d_u f(p,q))^2.
\end{equation}
\emph{Mutual energy measures} $\nu_{f_1,f_2}$ for $f_1,f_2\in \mathcal{F}\cap C_c(X)$ are defined using (\ref{E:energymeasure}) and polarization.

According to the Beurling-Deny decomposition of $(\mathcal{E},\mathcal{F})$, see \cite[Th\'eor\`eme 1]{Allain75} (or \cite[Section 3.2]{FOT94} for a different context), there exist a uniquely determined symmetric bilinear form $\mathcal{E}^c$ on $\mathcal{F}\cap C_c(X)$ satisfying $\mathcal{E}^c(f,g)=0$ whenever $f\in \mathcal{F}\cap C_c(X)$ is constant on an open neighborhood of the support of $g\in \mathcal{F}\cap C_c(X)$ and
a uniquely determined symmetric nonnegative Radon measure $J$ on $X\times X\setminus \diag$ such 
\begin{equation}\label{E:BD}
\mathcal{E}(f)=\mathcal{E}^c(f)+\int_X \int_X (d_uf(x,y))^2J(dxdy),\quad f\in \mathcal{F}\cap C_c(X).
\end{equation}
The form $\mathcal{E}^c$ is called the local part of $\mathcal{E}$, and by $\nu_f^c$ we denote the local part of the energy measure of a function $f\in\mathcal{F}\cap C_c(X)$, i.e. the finite Radon measure (uniquely) defined as in (\ref{E:energymeasure}) with $\mathcal{E}^c$ in place of $\mathcal{E}$. From (\ref{E:energymeasure}) and (\ref{E:BD}) it is immediate that 
\begin{equation}\label{E:BDmeas}
\int_X g d\nu_f=\int g d\nu_f^c +\int_X\int_X g(x) (d_uf(x,y))^2J(dxdy),\quad f,g\in \mathcal{F}\cap C_c(X).
\end{equation}

\begin{proposition}\label{P:indep}
Suppose that closed balls in $(X,R)$ are compact. Then for any $f_1,f_2\in\mathcal{F}\cap C_c(X)$ and $g_1,g_2\in C_c(X)$ we have 
\[\left\langle g_1\cdot \partial f_1, g_2\cdot \partial f_2\right\rangle_{\mathcal{H}}=\int_X g_1g_2\:d\nu_{f_1,f_2}^{(c)}+\int_X\int_X\overline{g_1}(x,y)\overline{g_2}(x,y)d_uf_1(x,y)d_uf_2(x,y)\:J(dxdy).\]
In particular, the definition of the bilinear form $\left\langle\cdot,\cdot\right\rangle_{\mathcal{H}}$ is independent of the choice of the sets $V_m$.
\end{proposition}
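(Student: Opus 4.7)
The plan is to reduce the general bilinear identity to the diagonal quadratic case $f_1=f_2=f$, $g_1=g_2=g$ via polarization, to assume first that $g\in\mathcal{F}\cap C_c(X)$ (the extension to $g\in C_c(X)$ being deferred to the end via \eqref{E:boundedaction} and dominated convergence), and then to compare the defining sum in \eqref{E:scalarprod} with the discrete approximations \eqref{E:energymeasureapprox} of the energy measure, using the Leibniz expansion of $\mathcal{E}_{V_m}(fg)$ to handle a residual quartic sum. The combinatorial engine is the pair of pointwise identities
\[\overline{g}(p,q)^2=\overline{g^2}(p,q)-\tfrac14\bigl(d_ug(p,q)\bigr)^2\qquad\text{and}\qquad \overline{f}(p,q)\,\overline{g}(p,q)=\overline{fg}(p,q)-\tfrac14\,d_uf(p,q)\,d_ug(p,q),\]
both immediate from expansion. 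The first identity splits the defining sum as $\tfrac12\sum c\,\overline{g^2}(d_uf)^2-\tfrac14 Q_m$, with $Q_m:=\tfrac12\sum c\,(d_uf)^2(d_ug)^2$. Since $c(m;p,q)$ and $(d_uf(p,q))^2$ are symmetric in $(p,q)$, the first summand equals $\tfrac12\sum c\,g(p)^2(d_uf)^2$, which by \eqref{E:energymeasureapprox} converges to $\int_X g^2\,d\nu_f$.

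The main obstacle is to identify the limit of the \emph{quartic} sum $Q_m$, which is not of the form handled directly by \eqref{E:energymeasureapprox}. My plan is to extract it as a residual in the Leibniz expansion of $\mathcal{E}_{V_m}(fg)$: combining the discrete Leibniz rule $d_u(fg)=\overline{f}\,d_ug+\overline{g}\,d_uf$ with the two identities above, a direct computation gives
\[(d_u(fg))^2=\overline{f^2}(d_ug)^2+\overline{g^2}(d_uf)^2+2\,\overline{fg}\,d_uf\,d_ug-(d_uf)^2(d_ug)^2.\]
Multiplying by $\tfrac12 c$, summing over $p,q\in V_m$, and letting $m\to\infty$: the left-hand side tends to $\mathcal{E}(fg)$ by \eqref{E:limitform} (note $fg\in\mathcal{F}\cap C_c(X)$ by \eqref{E:pointwisemult}); each of the first three sums on the right converges by the polarized form of \eqref{E:energymeasureapprox}, applied with $h=f^2$, $g^2$, and $fg$ respectively and combined with the same symmetry argument, to $\int f^2\,d\nu_g$, $\int g^2\,d\nu_f$, and $2\int fg\,d\nu_{f,g}$. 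Thus $Q_m$ converges. Inserting the Beurling--Deny splitting \eqref{E:BD} for $\mathcal{E}(fg)$ and \eqref{E:BDmeas} for each of the three energy-measure integrals, the local parts cancel by the Leibniz rule $\nu^c_{fg}=f^2\nu_g^c+g^2\nu_f^c+2fg\,\nu^c_{f,g}$ for the strongly local form $\mathcal{E}^c$, and the jump parts collapse, once again via the displayed expansion of $(d_u(fg))^2$ but now inside the $J$-integral, to $\int_X\!\int_X(d_uf)^2(d_ug)^2\,J(dxdy)$. Hence $\lim_m Q_m$ equals this integral.

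Substituting back into the splitting of the defining sum, applying \eqref{E:BDmeas} to $\int g^2\,d\nu_f$, and using $\overline{g^2}-\tfrac14(d_ug)^2=\overline{g}^2$, one obtains the claimed identity in the diagonal case. Polarizing in $f$ and $g$ then yields the bilinear formula on $\mathcal{F}\cap C_c(X)$, and a uniform-approximation argument extends it to $g_i\in C_c(X)$: on the left use \eqref{E:boundedaction}, on the right use the finiteness of $\nu_{f_1,f_2}^c$ together with the domination $|\overline{g_1}\,\overline{g_2}\,d_uf_1\,d_uf_2|\le\|g_1\|_{\sup}\|g_2\|_{\sup}|d_uf_1\,d_uf_2|\in L^1(J)$ (Cauchy--Schwarz and \eqref{E:BD}). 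The independence of $\langle\cdot,\cdot\rangle_{\mathcal{H}}$ from $(V_m)_m$ is then immediate, since the right-hand side is intrinsic.
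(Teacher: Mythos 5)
Your proof is correct, but it takes a genuinely different route from the one in the paper. The paper splits the defining double sum over $V_m\times V_m$ according to whether $R(x,y)>\varepsilon$ or $R(x,y)\leq\varepsilon$: the far-from-diagonal part converges vaguely to the jump integral (their \eqref{E:jumppart}), the near-diagonal part is identified with $\int_X g^2\,d\nu_f^c$ after discarding the quartic correction $\tfrac12\sum c\,(d_uf)^2(d_ug)^2$ restricted near the diagonal, whose vanishing is imported from \cite[Lemma 3.1]{HinzMeinert19+}. You avoid the $\varepsilon$-splitting entirely: the parallelogram-type identities and the discrete Leibniz rule reduce the defining sum to four sequences, each convergent by \eqref{E:limitform} or (the polarized form of) \eqref{E:energymeasureapprox}, and the global quartic sum $Q_m$ is pinned down exactly as $\int f^2 d\nu_g+\int g^2 d\nu_f+2\int fg\,d\nu_{f,g}-\mathcal{E}(fg)$, after which Beurling--Deny does the local/jump bookkeeping. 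The trade-off is in the imported facts: the paper leans on the vague convergence of the off-diagonal discrete measures to $J$ plus the near-diagonal quartic estimate, whereas you lean on the derivation property $\nu^c_{fg}=f^2\nu^c_g+g^2\nu^c_f+2fg\,\nu^c_{f,g}$ of the strongly local part. That identity is standard (it is the energy-measure Leibniz rule of \cite[Lemma 3.2.5]{FOT94}, whose proof uses only \eqref{E:energymeasure} and strong locality, so it transfers to $\mathcal{E}^c$ here and is not circular), but you should cite it explicitly since the paper never states it. Two smaller points: the final ``polarization'' should be done in two stages (first in $f$ for fixed $g$, then in $g$ via $\overline{g_1}\,\overline{g_2}=\tfrac14\bigl(\overline{g_1+g_2}^{\,2}-\overline{g_1-g_2}^{\,2}\bigr)$), since $g_1\cdot\partial f_1+g_2\cdot\partial f_2$ is not itself of the form $g\cdot\partial f$; and in the closing density step one should note that the discrete Cauchy--Schwarz bound $\bigl|\tfrac12\sum c\,\overline{g_1}\,\overline{g_2}\,d_uf_1d_uf_2\bigr|\leq\|g_1\|_{\sup}\|g_2\|_{\sup}\mathcal{E}(f_1)^{1/2}\mathcal{E}(f_2)^{1/2}$ holds uniformly in $m$, which is what guarantees both the existence of the limit for $g_i\in C_c(X)$ and the passage to the limit on the left-hand side.
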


\begin{proof}
Standard arguments show that for all $v\in C_c(X\times X\setminus \diag)$ we have 
\begin{equation}\label{E:jumppart}
\frac12\lim_{\varepsilon\to 0}\lim_{m\to\infty}\sum_{x\in V_m}\:\sum_{y\in V_m,R(x,y)>\varepsilon}c(m;x,y)v(x,y)=\int_X\int_X v(x,y)J(dxdy),
\end{equation}
see for instance \cite[Section 3.2]{FOT94}. The particular case $v=d_uf$, together with (\ref{E:BD}), then implies that
\begin{equation}\label{E:localform}
\mathcal{E}^c(f)=\frac12\lim_{\varepsilon \to 0}\lim_{m\to \infty} \sum_{x\in V_m}\:\sum_{y\in V_m, R(x,y)\leq \varepsilon}c(m;x,y)(d_uf(x,y))^2
\end{equation}
for any $f\in\mathcal{F}\cap C_c(X)$. We claim that given such $f$ and $g\in C_c(X)$, 
\begin{equation}\label{E:claim}
\int_Xg^2d\nu_f^c=\frac12\lim_{\varepsilon \to 0}\lim_{m\to \infty} \sum_{x\in V_m}\:\sum_{y\in V_m, R(x,y)\leq \varepsilon}c(m;x,y)\overline{g}(x,y)^2(d_uf(x,y))^2.
\end{equation}
This follows from (\ref{E:energymeasure}) and (\ref{E:localform}) and the fact that 
\[\lim_{\varepsilon\to 0}  \lim_{m\to\infty} \sum_{x\in V_m}\:\sum_{y\in V_m, R(x,y)\leq \varepsilon}c(m;x,y) (d_ug(x,y))^2(d_uf(x,y))^2=0,\]
which can be seen following the arguments in the proof of \cite[Lemma 3.1]{HinzMeinert19+}. Combining (\ref{E:jumppart}), applied to $v=g\cdot d_uf$, and (\ref{E:claim}), we obtain the desired result by polarization.
\end{proof}

As a consequence of Proposition \ref{P:indep} and dominated convergence we can define $g\cdot v$ for all $v\in\mathcal{H}$
and $g\in C_b(X)$ and (\ref{E:boundedaction}) remains true for such $v$ and $g$. Note also that if $v_1, v_2\in\mathcal{H}$ and $g\in C_b(X)$ then 
\[\left\langle g\cdot v_1, v_2\right\rangle_{\mathcal{H}}=\left\langle v_1, g\cdot v_2\right\rangle_{\mathcal{H}}.\]

In the special cases of finite graphs, \cite{Grigoryan, KLWbook}, and compact metric graphs, \cite{FKW07, KS99, KS00, Ku04, Ku05, Post12, Mugnolo14}, the space $\mathcal{H}$ and the operator $\partial$ appear in a more familiar form.

\begin{examples}\label{Ex:discrete1}
If $(V,\omega)$ is a finite simple weighted (unoriented) graph, \cite{Grigoryan}, then 
\[\mathcal{E}(u)=\frac12\sum_{p\in V}\sum_{q\in V}\omega(p,q)(u(p)-u(q))^2,\quad u\in\ell(V),\]
is a resistance form on the finite set $V$, and it makes it a compact resistance space.  In this case $\mathcal{H}$ is isometrically isomorphic to the space $\ell^2_a(V\times V\setminus \diag, \omega)$ of real-valued antisymmetric functions on $V\times V\setminus \diag$, endowed with the usual $\ell^2$-scalar product, and for any $f\in \ell(V)$ the gradient $\partial f\in \mathcal{H}$ of $f$ is the image of $d_uf \in \ell^2_a(V\times V\setminus \diag, \omega)$ under this isometric isomorphism, see for instance \cite[Section 3]{Hinz15}.
\end{examples}

\begin{examples}\label{Ex:metric1}
Let $(V,E)$ be a finite simple (unoriented) graph and $(l_e)_{e\in E}$ a finite sequence of positive numbers. Consider 
the metric graph $\Gamma$ obtained by identifying each edge $e\in E$ with an oriented copy of the interval $(0,l_e)$ and considering different copies to be joined at the vertices the respective edges have in common. Then the set $X_\Gamma=V\cup\bigcup_{e\in E} e$, endowed with a natural topology, becomes a compact metric space. For each $u\in C(X_\Gamma)$ let
\[\mathcal{E}_\Gamma(u)=\sum_{e\in E} \mathcal{E}_e (u_e),\quad \text{where }\quad \mathcal{E}_e(u_e)=\int_0^{l_e}(u_e(s))^2ds, \quad e\in E,\]
and $u_e$ is the restriction of $u$ to $e\in E$. If $\dot{W}^{1,2}(X_\Gamma)$ denotes the space of all $u\in C(X_\Gamma)$ such that $\mathcal{E}(u)<+\infty$ then $(\mathcal{E}_\Gamma, \dot{W}^{1,2}(X_\Gamma))$ is a resistance form making $X_\Gamma$ a compact resistance space. The space $\mathcal{H}$ is isometrically isomorphic to $\bigoplus_{e\in E} L^2(0,l_e)$, and for any $f\in \dot{W}^{1,2}(X_\Gamma)$ the gradient $\partial f\in\mathcal{H}$ is the image under this isometric isomorphism of $(f_e')_{e \in E}$, where $f_e'\in L^2(0,l_e)$ denotes the usual first derivative of $f_e$, seen as a function on $(0,l_e)$. For more precise descriptions and further details see \cite{BK16, IRT12}. In Subsection \ref{SS:metricgraphs} we consider a scaled variant of this construction as in \cite{HinzMeinert19+}. 
\end{examples}

\begin{remark}
For convenience the above construction of the space $\mathcal{H}$ and the operator $\partial$ is formulated for resistance spaces. However, we wish to point out that the original construction does not need the specific properties of a resistance space, it can be formulated for Dirichlet forms in very high generality, \cite{CS03}.
\end{remark}

\section{Linear equations of elliptic and parabolic type}\label{S:linearequations}

The considerations in this section are straightforward from standard theory for partial differential equations, \cite[Chapter 8]{GT10}, and Dirichlet forms, \cite{FOT94}, see for instance \cite{FiKu04}. 

Let $(\mathcal{E},\mathcal{F})$ be a resistance form on a nonempty set $X$ so that $(X,R)$ is separable and locally compact and assume that $(\mathcal{E},\mathcal{F})$ is regular. In addition, assume that closed balls are compact. Let $\mu$ be a Borel measure on $(X,R)$ such that for any $x\in X$ and $R>0$ we have $0<\mu(B(x,R))<+\infty$. Then by \cite[Theorem 9.4]{Ki12} the form $(\mathcal{E},\mathcal{F}\cap C_c(X))$ is closable on $L^2(X,\mu)$ and its closure, which we denote by $(\mathcal{E},\mathcal{D}(\mathcal{E}))$, is a regular Dirichlet form. In general we have $\mathcal{D}(\mathcal{E})\subset \mathcal{F}\cap L^2(X,\mu)$, and in the special case that $(X,R)$ is compact, $\mathcal{D}(\mathcal{E})=\mathcal{F}$, \cite[Section 9]{Ki12}. Given $\alpha>0$ we write 
\begin{equation}\label{E:addalpha}
\mathcal{E}_\alpha(f,g):=\mathcal{E}(f,g)+\alpha \left\langle f,g\right\rangle_{L^2(X,\mu)},\quad f,g\in\mathcal{D}(\mathcal{E}),\end{equation}
and we use an analogous notation for other bilinear forms. Recall that we also write $\mathcal{E}(f)$ to denote $\mathcal{E}(f,f)$ and similarly for other bilinear quantities.

By the closedness of $(\mathcal{E},\mathcal{D}(\mathcal{E}))$ the derivation $\partial$, defined as in the preceding section, extends to a closed unbounded linear operator $\partial:L^2(X,\mu)\to\mathcal{H}$ with domain $\mathcal{D}(\mathcal{E})$, we write 
$\im \partial$ for the image of $\mathcal{D}(\mathcal{E})$ under $\partial$. The adjoint operator $(\partial^\ast, \mathcal{D}(\partial^\ast))$ of $(\partial,\mathcal{D}(\mathcal{E}))$ can be interpreted as minus the divergence operator, and for the generator $(\mathcal{L},\mathcal{D}(\mathcal{L}))$ of $(\mathcal{E},\mathcal{D}(\mathcal{E}))$ we have $\partial f\in \mathcal{D}(\partial^\ast)$ whenever $f\in\mathcal{D}(\mathcal{L})$, and in this case, $\mathcal{L}f=-\partial^\ast\partial f$.

\subsection{Closed forms} We call a symmetric bounded linear operator $a:\mathcal{H} \to\mathcal{H}$ a \emph{uniformly elliptic (in the sense of quadratic forms)} if there are universal constants $0<\lambda<\Lambda$ such that 
\begin{equation}\label{E:elliptic}
\lambda \left\|v\right\|_{\mathcal{H}}^2\leq \left\langle a\,v, v\right\rangle_{\mathcal{H}}\leq \Lambda \left\|v\right\|_{\mathcal{H}}^2,\quad v\in \mathcal{H}.
\end{equation}
As mentioned in the introduction, the phrase 'uniformly elliptic' is interpreted in a wide sense, and (\ref{E:elliptic}) rather corresponds to a sort of energy equivalence, see for instance \cite[Definition 2.17]{BBK06}. We follow \cite{FiKu04} and say that an element $b\in \mathcal{H}$ is in the \emph{Hardy class} if there are constants $\delta(b)\in (0,\infty)$ and $\gamma(b)\in [0,\infty)$ such that 
\begin{equation}\label{E:Hardy}
\left\|g\cdot b\right\|_{\mathcal{H}}^2\leq \delta(b)\mathcal{E}(g)+\gamma(b)\left\|g\right\|_{L^2(X,\mu)}^2,\quad g\in\mathcal{F}\cap C_c(X).
\end{equation}

Given uniformly elliptic $a$ as in (\ref{E:elliptic}), $b$, $\hat{b}\in \mathcal{H}$ in the Hardy class and $c\in L^\infty(X,\mu)$ we consider the bilinear form on  $\mathcal{F}\cap C_c(X)$ defined by
\begin{equation}\label{E:Q}
\mathcal{Q}(f,g)= \left\langle a\cdot\partial f,\partial g\right\rangle_\mathcal{H}-\left\langle g\cdot b,\partial f\right\rangle_{\mathcal{H}}-\big\langle f\cdot \hat{b}, \partial g\big\rangle_{\mathcal{H}}-\left\langle cf,g\right\rangle_{L^2(X,\mu)}, \quad f,g\in\mathcal{F}\cap C_c(X).
\end{equation}

We say that a densely defined bilinear form $(Q,\mathcal{D}(Q))$ on $L^2(X,\mu)$ is \emph{semibounded} if there exists some $C\geq 0$ such that $Q(f)\geq -C\left\|f\right\|_{L^2(X,\mu)}^2$, $f\in\mathcal{D}(Q)$. If in addition $(\mathcal{D}(Q),\widetilde{Q}_{C+1})$ is a Hilbert space, where $\widetilde{Q}$ denotes the symmetric part of $Q$, defined by
\begin{equation}\label{E:symmetricpart}
\widetilde{Q}(f,g)=\frac{1}{2} \left(Q(f,g) + Q(f,g) \right), \quad f,g\in \mathcal{D}(Q),
\end{equation}
then we call $(Q,\mathcal{D}(Q))$ a \emph{closed form}. In other words, we call $(Q,\mathcal{D}(Q))$ a closed form if $(\widetilde{Q},\mathcal{D}(Q))$ is a closed quadratic form in the sense of \cite[Section VIII.6]{RS80}. We say that a closed form 
$(Q,\mathcal{D}(Q))$ is \emph{sectorial} if there is a constant $K>0$ such that 
\[|Q_{C+1}(f,g)|\leq K\:Q_{C+1}(f)^{1/2}Q_{C+1}(g)^{1/2},\quad f,g\in\mathcal{D}(Q),\]
where $C$ is as above. In other words, we call a closed form $(Q,\mathcal{D}(Q))$ sectorial if $(Q_C,\mathcal{D}(Q))$ is a coercive closed form in the sense of \cite[Definition 2.4]{MaRoeckner92}.

The following proposition follows from standard estimates and (\ref{E:Hardy}), we omit its proof.

\begin{proposition}\label{P:Qisnice}\mbox{}
\begin{enumerate}
\item[(i)] Assume that $a:\mathcal{H}\to\mathcal{H}$ is symmetric and satisfies (\ref{E:elliptic}),  
 $c\in L^\infty(X,\mu)$ and $b,\hat{b}\in\mathcal{H}$ are in the Hardy class and such that 
\begin{equation}\label{E:smallfields}
\lambda_0:=\frac12\left(\lambda-\sqrt{\delta(b)}-\sqrt{\delta(\hat{b})}\right)>0.
\end{equation} 
Then $(\mathcal{Q},\mathcal{F}\cap C_c(X))$ is closable on $L^2(X,\mu)$, and its closure $(\mathcal{Q},\mathcal{D}(\mathcal{E}))$ is a sectorial closed form.
\item[(ii)] If in addition $c$ is such that
\begin{equation}\label{E:cisnice}
c_0:=\essinf_{x\in X} \left(-c(x)\right)-\frac{\gamma(b)+\gamma(\hat{b})}{2\lambda_0}>0
\end{equation}
then $(\mathcal{Q},\mathcal{D}(\mathcal{E}))$ satisfies the bounds
\begin{equation}\label{E:posdef}
\lambda_0\:\mathcal{E}(f)+c_0\left\|f\right\|_{L^2(X,\mu)}^2\leq \mathcal{Q}(f)\leq \Lambda_\infty\:\mathcal{E}(f)+c_\infty\left\|f\right\|_{L^2(X,\mu)}^2, \quad f\in \mathcal{D}(\mathcal{E}),
\end{equation}
where 
\begin{equation}\label{E:upperconst}
\Lambda_\infty:=\Lambda+\sqrt{\delta(b)}+\sqrt{\delta(\hat{b})}+1\quad \text{and}\quad c_\infty:=\frac{\gamma(b)+\gamma(\hat{b})}{2}+\left\|c\right\|_{L^\infty(X,\mu)}.
\end{equation}
\end{enumerate}
\end{proposition}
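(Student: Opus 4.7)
The plan is to first establish the two-sided bound (\ref{E:posdef}) as an a priori estimate on $\mathcal{F}\cap C_c(X)$, and then to read off closability and sectoriality as consequences of this bound together with the fact that $(\mathcal{E},\mathcal{F}\cap C_c(X))$ is already known to be closable on $L^2(X,\mu)$.

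For the lower bound I would start from the ellipticity (\ref{E:elliptic}) and the identity $\|\partial f\|_{\mathcal{H}}^2=\mathcal{E}(f)$, which give $\langle a\,\partial f,\partial f\rangle_{\mathcal{H}}\ge \lambda\mathcal{E}(f)$. The cross terms $\langle f\cdot b,\partial f\rangle_{\mathcal{H}}$ and $\langle f\cdot \hat b,\partial f\rangle_{\mathcal{H}}$ I would handle by Cauchy-Schwarz followed by the Hardy bound (\ref{E:Hardy}) and the subadditivity of the square root, yielding
\[
|\langle f\cdot b,\partial f\rangle_{\mathcal{H}}|\le \sqrt{\delta(b)}\,\mathcal{E}(f)+\sqrt{\gamma(b)}\,\|f\|_{L^2(X,\mu)}\,\mathcal{E}(f)^{1/2},
\]
and analogously for $\hat b$. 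The critical bookkeeping step is to apply Young's inequality $AB\le \tfrac{\varepsilon}{2}A^2+\tfrac{1}{2\varepsilon}B^2$ with precisely $\varepsilon=\lambda_0$, so that the surviving $\mathcal{E}(f)$-coefficient becomes $\lambda-\sqrt{\delta(b)}-\sqrt{\delta(\hat b)}-\lambda_0=\lambda_0$ and the surviving $\|f\|_{L^2(X,\mu)}^2$-coefficient becomes exactly $(\gamma(b)+\gamma(\hat b))/(2\lambda_0)$. Together with $-\langle cf,f\rangle_{L^2(X,\mu)}\ge \essinf_{x\in X}(-c(x))\,\|f\|_{L^2(X,\mu)}^2$ this gives the sharp bound $\mathcal{Q}(f)\ge \lambda_0\mathcal{E}(f)+c_0\|f\|_{L^2(X,\mu)}^2$ under (\ref{E:cisnice}), which is (ii), and in general the semibounded estimate $\mathcal{Q}(f)\ge \lambda_0\mathcal{E}(f)-C\|f\|_{L^2(X,\mu)}^2$ for a suitable $C\ge 0$, which is what part (i) needs. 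The matching upper bound I would obtain by the same Hardy-plus-square-root estimate followed by Young's inequality with $\varepsilon=1$, together with $\langle a\,\partial f,\partial f\rangle_{\mathcal{H}}\le \Lambda\mathcal{E}(f)$ and $|\langle cf,f\rangle_{L^2(X,\mu)}|\le \|c\|_{L^\infty(X,\mu)}\|f\|_{L^2(X,\mu)}^2$; a direct inspection then reproduces the constants $\Lambda_\infty$ and $c_\infty$ in (\ref{E:upperconst}).

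With these a priori bounds in place, closability in (i) is immediate. With $C$ as above, the symmetric part $\widetilde{\mathcal{Q}}_{C+1}$ is equivalent on $\mathcal{F}\cap C_c(X)$ to $\mathcal{E}_1$, and since $(\mathcal{E},\mathcal{F}\cap C_c(X))$ is closable on $L^2(X,\mu)$ by the regularity of $(\mathcal{E},\mathcal{F})$, $(\widetilde{\mathcal{Q}},\mathcal{F}\cap C_c(X))$ is closable with closure having domain $\mathcal{D}(\mathcal{E})$. Polarization of the upper bound together with Cauchy-Schwarz supplies the sector condition
\[
|\mathcal{Q}(f,g)|\le K\,\widetilde{\mathcal{Q}}_{C+1}(f)^{1/2}\widetilde{\mathcal{Q}}_{C+1}(g)^{1/2},\quad f,g\in\mathcal{F}\cap C_c(X),
\]
so $\mathcal{Q}$ extends by continuity from $\mathcal{F}\cap C_c(X)$ to a coercive closed form on $\mathcal{D}(\mathcal{E})$ in the sense of \cite[Definition 2.4]{MaRoeckner92}. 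Both inequalities in (\ref{E:posdef}) then pass to $\mathcal{D}(\mathcal{E})$ by this same continuity.

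I expect the only real difficulty to be the algebraic calibration in the Young-inequality step, which is what forces the precise form of the constants $\lambda_0$, $c_0$, $\Lambda_\infty$ and $c_\infty$; once that is set correctly, the remainder reduces to Cauchy-Schwarz, the Hardy bound, and the standard closable/sectorial form machinery, and no properties of $(\mathcal{E},\mathcal{F})$ beyond what is recalled in Section \ref{S:resistanceforms} are required.
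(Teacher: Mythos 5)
Your proposal is correct and supplies precisely the ``standard estimates'' argument that the paper explicitly omits (the paper states only that the proposition ``follows from standard estimates and (\ref{E:Hardy})''): the calibration of Young's inequality with $\varepsilon=\lambda_0$ for the lower bound and $\varepsilon=1$ for the upper bound reproduces the constants $\lambda_0$, $c_0$, $\Lambda_\infty$, $c_\infty$ exactly, and the transfer of closability from $(\mathcal{E},\mathcal{F}\cap C_c(X))$ via the two-sided equivalence of $\widetilde{\mathcal{Q}}_{C+1}$ with $\mathcal{E}_1$ is the intended route. The only loose phrase is ``polarization of the upper bound'': since $\mathcal{Q}$ is non-symmetric its quadratic form only determines $\widetilde{\mathcal{Q}}$, so the sector condition must come from bounding each of the four terms of $\mathcal{Q}(f,g)$ separately by Cauchy--Schwarz and (\ref{E:Hardy}) against $\mathcal{E}_1(f)^{1/2}\mathcal{E}_1(g)^{1/2}$ --- which is what your parenthetical Cauchy--Schwarz step already does, so no actual gap results.
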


\begin{remark}
These conditions are chosen for convenience, we do not claim their optimality. Standard estimates using integrability conditions for vector fields, as for instance used in \cite{Suzuki18}, do not apply unless one assumes that energy measures are absolutely continuous with respect to $\mu$, an assumption we wish to avoid. 
\end{remark}

Suppose that the hypotheses of Proposition \ref{P:Qisnice} (i) are satisfied. Let $(\mathcal{L}^{(\mathcal{Q})}, \mathcal{D}(\mathcal{L}^{(\mathcal{Q})}))$ denote the infinitesimal generator of $(\mathcal{Q},\mathcal{D}(\mathcal{E}))$, that is, the 
unique closed operator on $L^2(X,\mu)$ associated with $(\mathcal{Q},\mathcal{D}(\mathcal{E}))$ by the identity 
\[\mathcal{Q}(f,g)=-\left\langle \mathcal{L}^\mathcal{Q} f,g\right\rangle_{L^2(X,\mu)},\quad f\in\mathcal{D}(\mathcal{L}^\mathcal{Q}),\ g\in\mathcal{D}(\mathcal{E}).\]
A direct calculation shows the following.

\begin{corollary} 
Let the hypotheses of Proposition \ref{P:Qisnice} (i) and (ii) be satisfied, let notation be as there and set 
\begin{equation}\label{E:possibleK}
K=\frac{1}{\lambda}\left(\Lambda +\sqrt{\delta(b)}+\sqrt{\delta(\hat{b})}+\sqrt{\gamma(b)}+\sqrt{\gamma(\hat{b})}\right)+\frac{2\left\|c\right\|_{L^\infty(X,\mu)}}{c_0}+1.
\end{equation}
Then the generator $(\mathcal{L}^{(\mathcal{Q})}, \mathcal{D}(\mathcal{L}^{(\mathcal{Q})}))$ satisfies the sector condition 
\begin{equation}\label{E:sectorcond}
\vert \left\langle (-\mathcal{L}^\mathcal{Q}-\varepsilon)f,g \right\rangle_{L^2(X, \mu)}\vert \leq K  \left\langle (-\mathcal{L}^\mathcal{Q}-\varepsilon)f,f \right\rangle_{L^2(X, \mu)}^{1/2} \left\langle (-\mathcal{L}^\mathcal{Q}-\varepsilon)g,g \right\rangle_{L^2(X, \mu)}^{1/2},
\end{equation}
$f,g\in\mathcal{D}(\mathcal{L}^\mathcal{Q})$, for all $0\leq \varepsilon\leq c_0/2$. 
\end{corollary}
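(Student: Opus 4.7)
The plan is to reformulate the claimed operator-level sector condition as a sector inequality for the shifted bilinear form
\[
\mathcal{Q}^{(\varepsilon)}(f,g):=\mathcal{Q}(f,g)-\varepsilon\langle f,g\rangle_{L^2(X,\mu)},
\]
and then to verify it by direct term-by-term estimates combined with the coercivity from Proposition \ref{P:Qisnice}(ii). By the defining identity of the generator, for $f\in\mathcal{D}(\mathcal{L}^{\mathcal{Q}})$ and $g\in\mathcal{D}(\mathcal{E})$ one has $\langle(-\mathcal{L}^{\mathcal{Q}}-\varepsilon)f,g\rangle_{L^2(X,\mu)}=\mathcal{Q}^{(\varepsilon)}(f,g)$, so (\ref{E:sectorcond}) is equivalent to
\[
|\mathcal{Q}^{(\varepsilon)}(f,g)|\leq K\,\mathcal{Q}^{(\varepsilon)}(f)^{1/2}\mathcal{Q}^{(\varepsilon)}(g)^{1/2},\quad f,g\in\mathcal{D}(\mathcal{L}^{\mathcal{Q}}).
\]
Since $\mathcal{F}\cap C_c(X)$ is a core of $(\mathcal{Q},\mathcal{D}(\mathcal{E}))$ and $\mathcal{Q}^{(\varepsilon)}$ is continuous in the form norm, it suffices to verify the inequality for $f,g\in\mathcal{F}\cap C_c(X)$ and then pass to the closure.

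For the diagonal, the assumption $0\leq\varepsilon\leq c_0/2$ combined with the coercivity bound (\ref{E:posdef}) yields
\[
\mathcal{Q}^{(\varepsilon)}(f)\geq\lambda_0\,\mathcal{E}(f)+\tfrac{c_0}{2}\|f\|_{L^2(X,\mu)}^2,
\]
so in particular $\mathcal{E}(f)\leq\lambda_0^{-1}\mathcal{Q}^{(\varepsilon)}(f)$ and $\|f\|_{L^2(X,\mu)}^2\leq 2c_0^{-1}\mathcal{Q}^{(\varepsilon)}(f)$. For the off-diagonal I estimate each of the five summands in the definition (\ref{E:Q}) of $\mathcal{Q}^{(\varepsilon)}(f,g)$ separately: Cauchy--Schwarz in $\mathcal{H}$ together with the operator bound $\|a\|\leq\Lambda$, obtained from the upper half of (\ref{E:elliptic}), for the principal term; Cauchy--Schwarz combined with the Hardy estimate (\ref{E:Hardy}) and the elementary inequality $\sqrt{s+t}\leq\sqrt{s}+\sqrt{t}$ for the two first-order terms, which gives bounds of the form $(\sqrt{\delta(b)}\mathcal{E}(g)^{1/2}+\sqrt{\gamma(b)}\|g\|_{L^2(X,\mu)})\mathcal{E}(f)^{1/2}$ and its $\hat b$-analogue with the roles of $f$ and $g$ swapped; and H\"older for the remaining $c$- and $\varepsilon$-terms, bounded jointly by $(\|c\|_{L^\infty(X,\mu)}+c_0/2)\|f\|_{L^2(X,\mu)}\|g\|_{L^2(X,\mu)}$. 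Substituting the diagonal bounds from the previous step into these four contributions produces a sector inequality with some explicit constant $K'$.

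The conceptual content of the argument is minimal, and the only obstacle is bookkeeping, namely showing that the $K'$ naturally produced above is dominated by the expression (\ref{E:possibleK}). This is done via the defining relation $2\lambda_0=\lambda-\sqrt{\delta(b)}-\sqrt{\delta(\hat b)}$ to compare factors of $\lambda_0^{-1}$ with the $\lambda^{-1}$ appearing in the first summand of $K$, elementary arithmetic to consolidate the mixed cross terms $\mathcal{E}(\cdot)^{1/2}\|\cdot\|_{L^2(X,\mu)}$ into the single contribution involving $\sqrt{\gamma(b)}+\sqrt{\gamma(\hat b)}$, and the bound $\varepsilon\leq c_0/2$ to absorb the $\varepsilon$-contribution into the final summand $+1$ of $K$.
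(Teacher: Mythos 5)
The paper offers no argument here beyond the phrase ``a direct calculation shows the following,'' so your term-by-term estimate is the only route available for comparison, and most of it is sound: the reduction to the form-level inequality $|\mathcal{Q}^{(\varepsilon)}(f,g)|\leq K\,\mathcal{Q}^{(\varepsilon)}(f)^{1/2}\mathcal{Q}^{(\varepsilon)}(g)^{1/2}$, the diagonal lower bounds $\mathcal{E}(f)\leq\lambda_0^{-1}\mathcal{Q}^{(\varepsilon)}(f)$ and $\|f\|_{L^2(X,\mu)}^2\leq 2c_0^{-1}\mathcal{Q}^{(\varepsilon)}(f)$, and the off-diagonal estimates via Cauchy--Schwarz and (\ref{E:Hardy}) are all correct, and the $c$- and $\varepsilon$-terms do yield exactly $2\|c\|_{L^\infty(X,\mu)}/c_0+1$.

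The gap is in your final bookkeeping claim. Your estimates produce the constant
\[
K'=\frac{\Lambda+\sqrt{\delta(b)}+\sqrt{\delta(\hat b)}}{\lambda_0}+\sqrt{\frac{2}{\lambda_0 c_0}}\left(\sqrt{\gamma(b)}+\sqrt{\gamma(\hat b)}\right)+\frac{2\left\|c\right\|_{L^\infty(X,\mu)}}{c_0}+1,
\]
and you assert that the relation $2\lambda_0=\lambda-\sqrt{\delta(b)}-\sqrt{\delta(\hat b)}$ lets you dominate the $\lambda_0^{-1}$ factors by the $\lambda^{-1}$ in (\ref{E:possibleK}). This comparison goes the wrong way: that relation gives $\lambda_0\leq\lambda/2$, hence $\lambda_0^{-1}\geq 2\lambda^{-1}$, so already the principal term contributes at least $2\Lambda/\lambda$ rather than $\Lambda/\lambda$; likewise the mixed terms carry the factor $\sqrt{2/(\lambda_0 c_0)}$, which bears no relation to $1/\lambda$ and can be arbitrarily large when $c_0$ is small. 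So $K'\leq K$ fails in general and the argument does not establish the sector condition with the specific constant (\ref{E:possibleK}). What your computation does prove is a sector condition with the explicit constant $K'$ above, depending only on $\lambda,\Lambda,\delta(b),\delta(\hat b),\gamma(b),\gamma(\hat b),c_0,\|c\|_{L^\infty(X,\mu)}$ and monotone in these data, which is all that is actually used in the sequel (the value of $K$ only enters through $C_K$ in (\ref{E:energyboundparabolic}) and through uniformity over $m$); but as a proof of the corollary as literally stated, the last step does not close, and it is not clear that any rearrangement of the same estimates reaches the expression (\ref{E:possibleK}).
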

%

\subsection{Linear elliptic and parabolic problems} 
Suppose throughout this subsection that $a$, $b$, $\hat{b}$ and $c$ satisfy the hypotheses of Proposition \ref{P:Qisnice} (i) and (ii). It is straightforward to formulate equations of elliptic type. Given $f\in L^2(X,\mu)$, we say that $u\in L^2(X,\mu)$ is a \emph{weak solution} to 
\begin{equation}\label{E:ellipticeq}
\mathcal{L}^{\mathcal{Q}} u=f
\end{equation}
if $u\in \mathcal{D}(\mathcal{E})$ and $\mathcal{Q}(u,g)=-\left\langle f,g\right\rangle_{L^2(X,\mu)}$ for all $g\in\mathcal{D}(\mathcal{E})$.

\begin{remark}
Formally, the generator $(\mathcal{L}^{\mathcal{Q}},\mathcal{D}(\mathcal{L}^{\mathcal{Q}}))$ of $(\mathcal{Q},\mathcal{D}(\mathcal{E}))$
has the structure
\[\mathcal{L}^{\mathcal{Q}} u=-\partial^\ast (a\,\partial u)+b\cdot \partial u+\partial^\ast(u\cdot \hat{b})+cu,\]
so that equation (\ref{E:ellipticeq}) is seen to be an abstract version of the elliptic equation 
\[\diverg(a \nabla u) + b\cdot \nabla u - \diverg(ub)+cu=f.\]
\end{remark}


It follows from the lower estimate in (\ref{E:posdef}) that the Green operator $G^\mathcal{Q}=(-\mathcal{L}^{\mathcal{Q}})^{-1}$ of $\mathcal{L}^{\mathcal{Q}}$ exists as a bounded linear operator $G^\mathcal{Q}:L^2(X,\mu)\to L^2(X,\mu)$ and satisfies
\begin{equation}\label{E:Green}
\mathcal{Q}(G^\mathcal{Q}f,g)=\left\langle f,g\right\rangle_{L^2(X,\mu)},\quad f\in L^2(X,\mu),\ g\in\mathcal{D}(\mathcal{E}).
\end{equation}

\begin{corollary}\label{C:solutionelliptic}
For any $f\in L^2(X,\mu)$ the function $u=-G^\mathcal{Q}f \in\mathcal{D}(\mathcal{L}^\mathcal{Q})$ is the unique weak solution to (\ref{E:ellipticeq}). It satisfies
\begin{equation}\label{E:energyboundelliptic}
\mathcal{Q}_1(u)\leq \left(\frac{2}{c_0}+\frac{4}{c_0^2}\right)\left\|f\right\|_{L^2(X,\mu)}^2.
\end{equation}
\end{corollary}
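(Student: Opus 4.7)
The plan is to argue existence by unwinding the definition of the Green operator, then to extract uniqueness from the coercivity bound in Proposition \ref{P:Qisnice}(ii), and finally to derive the energy estimate by testing the equation against $u$ itself and using the same coercivity bound together with Cauchy--Schwarz.

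For existence, I set $u:=-G^{\mathcal{Q}}f$. Since $G^{\mathcal{Q}}$ is defined as $(-\mathcal{L}^{\mathcal{Q}})^{-1}$, its range is contained in $\mathcal{D}(\mathcal{L}^{\mathcal{Q}})\subset \mathcal{D}(\mathcal{E})$, so $u\in\mathcal{D}(\mathcal{L}^{\mathcal{Q}})$. Applying (\ref{E:Green}) and using bilinearity of $\mathcal{Q}$ in the first argument,
\[
\mathcal{Q}(u,g)=-\mathcal{Q}(G^{\mathcal{Q}}f,g)=-\left\langle f,g\right\rangle_{L^{2}(X,\mu)},\quad g\in\mathcal{D}(\mathcal{E}),
\]
which is exactly the weak-solution identity.

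For uniqueness, I observe that if $u_1,u_2\in\mathcal{D}(\mathcal{E})$ are both weak solutions, the difference $w:=u_1-u_2\in\mathcal{D}(\mathcal{E})$ satisfies $\mathcal{Q}(w,g)=0$ for all $g\in\mathcal{D}(\mathcal{E})$. Testing with $g=w$ and invoking the lower bound in (\ref{E:posdef}) gives $\lambda_0\,\mathcal{E}(w)+c_0\left\|w\right\|_{L^2(X,\mu)}^2\leq \mathcal{Q}(w)=0$, so $w=0$ in $L^2(X,\mu)$ thanks to $c_0>0$.

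For the energy bound, I test the weak identity against $u$ itself to obtain $\mathcal{Q}(u)=-\left\langle f,u\right\rangle_{L^2(X,\mu)}\leq \left\|f\right\|_{L^2(X,\mu)}\left\|u\right\|_{L^2(X,\mu)}$ by Cauchy--Schwarz. Combining this with the lower bound $c_0\left\|u\right\|_{L^2(X,\mu)}^2\leq \mathcal{Q}(u)$ from (\ref{E:posdef}) yields $\left\|u\right\|_{L^2(X,\mu)}\leq c_0^{-1}\left\|f\right\|_{L^2(X,\mu)}$, and hence $\mathcal{Q}(u)\leq c_0^{-1}\left\|f\right\|_{L^2(X,\mu)}^2$. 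Adding these two estimates bounds $\mathcal{Q}_1(u)=\mathcal{Q}(u)+\left\|u\right\|_{L^2(X,\mu)}^2$ by $(c_0^{-1}+c_0^{-2})\left\|f\right\|_{L^2(X,\mu)}^2$, which is dominated by the displayed constant $(2/c_0+4/c_0^2)$ and gives (\ref{E:energyboundelliptic}). There is no real obstacle here: the sole ingredient beyond the definition of a weak solution is the coercivity (\ref{E:posdef}) packaged in Proposition \ref{P:Qisnice}(ii), so the proof reduces to a three-line calculation once the Green operator is in place.
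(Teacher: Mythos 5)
Your proof is correct. Existence and uniqueness are handled exactly as the paper intends (the paper dismisses them as ``clear''), and your energy estimate is sound; the only genuine difference lies in how you bound $\left\|u\right\|_{L^2(X,\mu)}$. The paper obtains $\left\|G^\mathcal{Q}f\right\|_{L^2(X,\mu)}\leq \varepsilon^{-1}\left\|f\right\|_{L^2(X,\mu)}$ with $\varepsilon=c_0/2$ from the Hille--Yosida bound for the contraction semigroup generated by $\mathcal{L}^\mathcal{Q}+\varepsilon$, which is why the constant $2/c_0+4/c_0^2$ appears; you instead extract $\left\|u\right\|_{L^2(X,\mu)}\leq c_0^{-1}\left\|f\right\|_{L^2(X,\mu)}$ directly from the coercivity lower bound in (\ref{E:posdef}) applied to $\mathcal{Q}(u)=-\left\langle f,u\right\rangle_{L^2(X,\mu)}$. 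Your route is more elementary and self-contained (no semigroup theory needed) and in fact yields the sharper constant $c_0^{-1}+c_0^{-2}$, which is dominated by the displayed one and still depends monotonically on $c_0$ --- the only property the authors say they care about in the remark following the corollary. Both arguments are equally valid; the paper's choice merely reuses machinery it needs anyway for the parabolic case in Corollary \ref{C:solutionparabolic}.
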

\begin{remark}
The constant in (\ref{E:energyboundelliptic}) is chosen just for convenience. The only fact that matters is that it may be chosen in a way that depends monotonically on $c_0$.
\end{remark}

\begin{proof}
The first part is clear, the second follows from (\ref{E:Green}), Cauchy-Schwarz and because for any $0<\varepsilon\leq c_0/2$ with $c_0$ as in (\ref{E:cisnice}) the operator $\mathcal{L}^\mathcal{Q}+\varepsilon$ generates a strongly continuous contraction semigroup, so that 
\[\left\|G^\mathcal{Q}f\right\|_{L^2(X,\mu)}=\left\|\left(\varepsilon + (-\varepsilon-\mathcal{L}^{\mathcal{Q}})\right)^{-1}f\right\|_{L^2(X,\mu)}\leq \frac{1}{\varepsilon}\left\|f\right\|_{L^2(X,\mu)}.\]
\end{proof}

\begin{remark}\label{R:more_general_c_possible}
If $c\in L^\infty(X, \mu)$ does not satisfy \eqref{E:cisnice}, one can at least solve equations 
\begin{equation}
\mathcal{L}^{\mathcal{Q}}u - c_1 u=f, 
\end{equation}
where $c_1>0$ is such that with $c_0$ defined as in (\ref{E:cisnice}) one has $c_0 + c_1>0$. The sectorial closed form 
\begin{equation}\label{E:modQ}
\mathcal{Q}_{c_1}(f,g) = \mathcal{Q}(f,g) + c_1\langle f,g\rangle_{L^2(X,\mu)}, \quad f,g \in \mathcal{D}(\mathcal{E}),
\end{equation}
satisfies \eqref{E:posdef},  \eqref{E:upperconst}, (\ref{E:sectorcond}) and (\ref{E:possibleK}) with $c_0 + c_1$ and $\left\|c\right\|_{L^\infty(X,\mu)} + c_1$ in place of $c_0$ and $\left\|c\right\|_{L^\infty(X,\mu)}$.
\end{remark}

Related parabolic problems can be discussed in a similar manner. Given $\mathring{u}\in L^2(X,\mu)$ we say that a function $u:(0,+\infty)\to L^2(X,\mu)$ is a \emph{solution} to the
Cauchy problem
\begin{equation}\label{E:paraboliceq}
\partial_t u(t)= \mathcal{L}^\mathcal{Q}u(t),\ t>0, \quad u(0)=\mathring{u},
\end{equation}
if $u$ is an element of $C^1((0,+\infty),L^2(X,\mu))\cap C([0,+\infty),L^2(X,\mu))$, we have $u(t)\in\mathcal{D}(\mathcal{L}^ {\mathcal{Q}})$ for any $t>0$ and (\ref{E:paraboliceq}) holds. See \cite[Chapter 4, Section 1]{P83}.

\begin{remark}
Problem (\ref{E:paraboliceq}) is an abstract version of the parabolic problem 
\[\partial_tu(t)=\diverg(a \nabla u(t)) + b\cdot \nabla u(t) - \diverg(u(t)\hat{b})+cu(t), \ t>0, \quad u(0)=\mathring{u}.\]
\end{remark}

Let $(T_t^\mathcal{Q})_{t>0}$ denote the strongly continuous contraction semigroup on $L^2(X,\mu)$ generated by $\mathcal{L}^\mathcal{Q}$.

\begin{corollary}\label{C:solutionparabolic}
For any $\mathring{u}\in L^2(X,\mu)$ the Cauchy problem (\ref{E:paraboliceq}) has the unique solution $u(t)=T_t^{\mathcal{Q}}\mathring{u}$, $t>0$. For any $t>0$ it satisfies $u(t)\in\mathcal{D}(\mathcal{L}^{\mathcal{Q}})$ and
\begin{equation}\label{E:energyboundparabolic}
\mathcal{Q}_1(u(t))\leq \left(\frac{C_K}{t}+1\right)\left\|\mathring{u}\right\|_{L^2(X,\mu)}^2,
\end{equation}
where $C_K>0$ is a constant depending only on the sector constant $K$ in (\ref{E:sectorcond}).
\end{corollary}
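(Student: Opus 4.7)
The plan is to derive everything from the fact that the sectoriality of $(\mathcal{Q},\mathcal{D}(\mathcal{E}))$ makes $(T_t^\mathcal{Q})_{t>0}$ an analytic semigroup whose smoothing estimate is governed by the sector constant $K$. By Proposition \ref{P:Qisnice}(ii) and the sector estimate (\ref{E:sectorcond}), for any $\varepsilon\in(0,c_0/2]$ the operator $-(\mathcal{L}^\mathcal{Q}+\varepsilon)$ is m-sectorial on $L^2(X,\mu)$ with sector constant $K$, so the perturbed semigroup is a contractive analytic semigroup and $T_t^\mathcal{Q}=e^{-\varepsilon t}\,e^{t(\mathcal{L}^\mathcal{Q}+\varepsilon)}$ is itself analytic and satisfies $\|T_t^\mathcal{Q}\|_{L^2\to L^2}\le e^{-\varepsilon t}\le 1$ for all $t>0$.

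For existence and uniqueness, I would just cite the standard classical-solution result for analytic semigroups, e.g.\ \cite[Ch.~4, Cor.~1.5 and Thm.~3.1]{P83}: for every $\mathring{u}\in L^2(X,\mu)$ the function $u(t):=T_t^\mathcal{Q}\mathring{u}$ lies in $\mathcal{D}(\mathcal{L}^\mathcal{Q})$ for all $t>0$, is continuously differentiable into $L^2(X,\mu)$ on $(0,\infty)$, solves (\ref{E:paraboliceq}) and is the unique such solution.

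For the quantitative bound, I would use the classical analytic-semigroup estimate $\|\mathcal{L}^\mathcal{Q} T_t^\mathcal{Q}\|_{L^2\to L^2}\le C_K/t$ for $t>0$ (see \cite[Ch.~2, Thm.~5.2(d)]{P83}), where the constant depends only on the opening angle of the sector, which in turn depends only on the sector constant $K$ from (\ref{E:sectorcond}). Since $u(t)\in\mathcal{D}(\mathcal{L}^\mathcal{Q})\subset\mathcal{D}(\mathcal{E})$, we may use $u(t)$ as a test function to write $\mathcal{Q}(u(t))=-\langle\mathcal{L}^\mathcal{Q} u(t),u(t)\rangle_{L^2(X,\mu)}$, and by Cauchy--Schwarz together with contractivity,
\begin{equation*}
\mathcal{Q}(u(t))\le \|\mathcal{L}^\mathcal{Q} u(t)\|_{L^2(X,\mu)}\,\|u(t)\|_{L^2(X,\mu)}\le\frac{C_K}{t}\,\|\mathring{u}\|_{L^2(X,\mu)}^2.
\end{equation*}
Adding $\|u(t)\|_{L^2(X,\mu)}^2\le\|\mathring{u}\|_{L^2(X,\mu)}^2$ yields (\ref{E:energyboundparabolic}). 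Note that by (\ref{E:posdef}) and $c_0>0$ the quantity $\mathcal{Q}(u(t))$ is nonnegative, so no absolute values are lost.

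The only nontrivial step is extracting from the standard theory that $C_K$ genuinely depends only on $K$; this is routine once one observes that the numerical range of $-\mathcal{L}^\mathcal{Q}-\varepsilon$ is contained in a sector of half-angle $\arctan K$, so the Dunford calculus yields the $1/t$ estimate with a constant depending only on that angle. Everything else is bookkeeping from Pazy's theory.
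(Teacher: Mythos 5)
Your proposal is correct and follows essentially the same route as the paper: sectoriality of $(\mathcal{Q},\mathcal{D}(\mathcal{E}))$ gives a holomorphic contraction semigroup with the smoothing estimate $\Vert \mathcal{L}^\mathcal{Q} T_t^{\mathcal{Q}}\Vert_{L^2\to L^2}\leq C_K/t$, and then (\ref{E:energyboundparabolic}) follows from $\mathcal{Q}(u(t))=-\langle\mathcal{L}^\mathcal{Q} u(t),u(t)\rangle_{L^2(X,\mu)}$, Cauchy--Schwarz and contractivity, exactly as in the paper's proof. The only cosmetic difference is that the paper invokes the fact that $0$ lies in the resolvent set of $\mathcal{L}^\mathcal{Q}$ (via (\ref{E:posdef})) to pass from the shifted generator to $\mathcal{L}^\mathcal{Q}$ itself, whereas you absorb this into the choice of $C_K$; both are routine.
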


\begin{proof}
Again the first part of the Corollary is standard. To see (\ref{E:energyboundparabolic}) recall that the operator $(\mathcal{L}^\mathcal{Q}, \mathcal{D}(\mathcal{L}^\mathcal{Q}))$ satisfies the sector condition (\ref{E:sectorcond}). Consequently the semigroup $(T_t^\mathcal{Q})_{t>0}$ generated by $(\mathcal{L}^\mathcal{Q}+\varepsilon, \mathcal{D}(\mathcal{L}^\mathcal{Q}))$ extends to a holomorphic contraction semigroup on the sector $\{z\in\mathcal{C}: |\im z|\leq K^{-1}\mathrm{Re}\:z\}$, see for instance \cite[Chapter XI, Theorem 1.24]{Kato80}, or \cite[Theorem 2.20 and Corollary 2.21]{MaRoeckner92}. By (\ref{E:posdef}) zero is contained in the resolvent set of $\mathcal{L}^\mathcal{Q}$. This implies that for any $t>0$ we have 
\begin{equation}\label{E:prop_holomorphic_sg}
\Vert \mathcal{L}^\mathcal{Q} T_t^{\mathcal{Q}}f\Vert_{L^2(X, \mu)} \leq \frac{C_K}{t}\Vert f\Vert_{L^2(X,\mu)}, \quad f\in L^2(X,\mu),
\end{equation}
for some $C_K \in (0, \infty)$ depending only on the sector constant $K$, as an inspection of the classical proofs of (\ref{E:prop_holomorphic_sg}) shows, see for instance \cite[Theorem 4.6]{EngelNagel}, \cite[Section 2.5, Theorem 5.2]{P83} or the explanations in \cite[Section 2]{MugnoloNittkaPost13}. Now (\ref{E:energyboundparabolic}) follows using (\ref{E:prop_holomorphic_sg}), Cauchy-Schwarz and contractivity.
\end{proof}

\begin{remark}
Since weak solutions to (\ref{E:ellipticeq}) and solutions to (\ref{E:paraboliceq}) at fixed positive times are elements of $\mathcal{D}(\mathcal{E})\supset \mathcal{D}(\mathcal{L}^{\mathcal{Q}})$, they are H\"older continuous of order $1/2$ on $(X,R)$ by (\ref{E:resistanceest}). 
\end{remark}

It is a trivial observation that if $a\in C(X)$ satisfies
\begin{equation}\label{E:triviala}
\lambda< a(x) <\Lambda, \quad x\in X,
\end{equation}
then $a$, interpreted as a bounded linear map $v \mapsto a\cdot v$ from $\mathcal{H}$ into itself, satisfies (\ref{E:elliptic}). 
Our main interest is to understand the drift terms and therefore we restrict attention to coefficients $a$ of form (\ref{E:triviala}) in the following sections. Note that under condition (\ref{E:triviala}) the function $a$ may also be seen as a conformal factor, \cite{Azzam}.

\begin{remark}
A discussion of more general diffusion coefficients $a$ should involve suitable coordinates, see \cite{Hino08, HT-fgs5, T08}. In view of the fact that natural local energy forms on p.c.f. self-similar sets have pointwise index one, \cite{BK16, Ku93, Hino10}, assumption (\ref{E:triviala}) does not seem to be unreasonably restrictive for this class of fractal spaces.
\end{remark}

On finite graphs, \cite{Grigoryan, KLWbook}, and compact metric graphs, \cite{FKW07, KS99, KS00, Ku04, Ku05, Post12, Mugnolo14}, the forms in (\ref{E:Q}) admit rather familiar expressions. 

\begin{examples}\label{Ex:discrete2}
In the setup of Examples \ref{Ex:discrete1} and with a given volume function $\mu:V\to (0,+\infty)$ we obtain, accepting a slight abuse of notation,
\begin{multline}
\mathcal{Q}(f,g)=\frac12\sum_{p\in V}\sum_{q\in V}\omega(p,q) \overline{a}(p,q)(f(p)-f(q))(g(p)-g(q))-\frac12\sum_{p\in V}\sum_{q\in V}\omega(p,q) \overline{g}(p,q)b(p,q)(f(p)-f(q))\notag\\
- \frac12\sum_{p\in V}\sum_{q\in V}\omega(p,q) \overline{f}(p,q)\hat{b}(p,q)(g(p)-g(q))-\sum_{p\in V}c(p)f(p)g(p)\mu(p)
\end{multline}
for all $f,g\in \ell(V)$ and any given coefficients $a,c\in \ell(V)$ and $b,\hat{b}\in \ell^2(V\times V\setminus \diag, \omega)$.
\end{examples}

\begin{examples}\label{Ex:metric2}
Suppose we are in the same situation as in Examples \ref{Ex:discrete1} and $\mu$ is a finite Borel measure on $X_\Gamma$ that has full support and is equivalent to the Lebesgue-measure on each individual edge. Then, abusing notation slightly, 
\begin{multline}
\mathcal{Q}(f,g)=\sum_{e\in E} \int_0^{l_e} a_e(s)f_e'(s)g_e'(s)ds-\sum_{e\in E} \int_0^{l_e} g_e(s)b_e(s)f_e'(s)ds-\sum_{e\in E} \int_0^{l_e} f_e(s)\hat{b}_e(s)g_e'(s)ds\notag\\
-\sum_{e\in E} \int_0^{l_e} c_e(s)f_e(s)g_e(s) \mu(ds)
\end{multline}
for all $f,g \in \dot{W}^{1,2}(X_\Gamma)$ and all $a\in C(X_\Gamma)$, $c\in L^\infty(X_\Gamma,\mu)$ and $b,\hat{b}\in \bigoplus_{e\in E} L^2(0,l_e)$, where $u_e$ denoted the restriction to $e\in E$ in the a.e. sense of an integrable function on $X_\Gamma$.
\end{examples}

\section{Convergence of solutions on a single space}\label{S:single}

In this section we define bilinear forms $\mathcal{Q}_{(m)}$ on $L^2(X,\mu)$ by replacing $a$, $b$ and $\hat{b}$ in (\ref{E:Q}) by coefficients $a_m$ $b_m$ and $\hat{b}_m$ that may vary with $m$. To keep the exposition more transparent and since it is rather trivial to vary it, we keep $c$ fixed.
We consider the unique weak solutions to elliptic problems (\ref{E:ellipticeq}) and unique solutions at fixed positive times of parabolic problems (\ref{E:paraboliceq}) with these coefficients. For a sequence $(a_m)_m$ satisfying (\ref{E:elliptic}) uniformly in $m$, bounded sequences $(b_m)_m$ and $(\hat{b}_m)_m$ and small enough $c$, we can find accumulation points with respect to the uniform convergence on $X$ of these solutions, and these accumulation points are elements of $\mathcal{F}$, Corollary \ref{C:KigamiAA}. If  coefficients $a$, $b$, $\hat{b}$ and $c$ are given and the sequences $(a_m)_m$, $(b_m)_m$ and $(\hat{b}_m)_m$ converge to $a$, $b$ and $\hat{b}$, respectively, then we can conclude the uniform convergence of the solutions to the respective solutions of the target problem, Theorem \ref{T:approxsame}.

\subsection{Boundedness and convergence of vector fields} As in the preceding section we assume that $(X,R)$ is separable and locally compact, that $(\mathcal{E},\mathcal{F})$ is regular and that $\mu$ is a Borel measure on $(X,R)$ such that for any $x\in X$ and $R>0$ we have $0<\mu(B(x,R))<+\infty$.

Under a mild geometric assumption on $\mu$ any vector field $b\in\mathcal{H}$ satisfies the Hardy condition. We say that $\mu$ \emph{has a uniform lower bound $V$} if $V$ is an non-decreasing function $V:(0,+\infty)\to (0,+\infty)$ so that 
\begin{equation}\label{E:unilowerbound}
\mu(B(x,r))\geq V(r),\quad x\in X,\quad  r>0.
\end{equation}
The following proposition is a partial refinement of \cite[Lemma 4.2]{HR16}. 

\begin{proposition}\label{P:Luke}
Suppose that $\mu$ has the uniform lower bound $V$. Then for any $g\in \mathcal{F}\cap C_c(X)$, any $b\in\mathcal{H}$ and any $M>0$ we have
\begin{equation}\label{E:HR15}
\left\|g\cdot b\right\|_{\mathcal{H}}^2\leq \frac{1}{M}\mathcal{E}(g)+\mathcal{V}(M\left\|b\right\|_{\mathcal{H}}^2)\left\|b\right\|_{\mathcal{H}}^{2}\left\|g\right\|_{L^2(X,\mu)}^2,
\end{equation}
where $\mathcal{V}$ is the non-decreasing function
\[\mathcal{V}(s)=\frac{2}{V\left(\frac{1}{2 s}\right)}, \quad s>0.\]
In particular, any $b\in\mathcal{H}$ is in the Hardy class, and for any $M>0$ it satisfies the estimate (\ref{E:Hardy}) with 
$\delta(b)=\frac{1}{M}$ and  $\gamma(b)=\mathcal{V}(M\left\|b\right\|_{\mathcal{H}}^2)\left\|b\right\|_{\mathcal{H}}^{2}$.
Moreover, for any $\lambda>0$ condition (\ref{E:smallfields}) holds if we choose $M>2/\lambda$ for both $b$ and $\hat{b}$.
\end{proposition}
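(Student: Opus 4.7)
The plan is to prove a Moser-type $L^\infty$ bound
\[
\|g\|_{\sup}^2 \leq \frac{2}{V(r)}\,\|g\|_{L^2(X,\mu)}^2 + 2r\,\mathcal{E}(g), \qquad r>0,\quad g\in\mathcal{F}\cap C_c(X),
\]
and then combine it with the action bound (\ref{E:boundedaction}) and optimize in $r$. The Moser-type bound rests on two ingredients already recorded in Section \ref{S:resistanceforms}: the pointwise resistance estimate (\ref{E:resistanceest}) gives $|g(x)-g(y)|\leq \sqrt{R(x,y)\mathcal{E}(g)}$, so for $y\in B(x,r)$ one has $|g(x)|\leq |g(y)|+\sqrt{r\,\mathcal{E}(g)}$. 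Averaging in $y\in B(x,r)$ against $\mu$, applying Cauchy--Schwarz to bound the average of $|g|$ by $\mu(B(x,r))^{-1/2}\|g\|_{L^2(X,\mu)}$, invoking (\ref{E:unilowerbound}) to replace $\mu(B(x,r))$ by $V(r)$, taking the supremum in $x$, and squaring via $(\alpha+\beta)^2\leq 2\alpha^2+2\beta^2$ produces the displayed inequality. Note that $B(x,r)$ is nonempty (it contains $x$) and (\ref{E:unilowerbound}) ensures $\mu(B(x,r))>0$, so the averaging is well defined.

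Once the Moser bound is in place, (\ref{E:boundedaction}) yields $\|g\cdot b\|_{\mathcal{H}}^2\leq \|g\|_{\sup}^2\|b\|_{\mathcal{H}}^2$, and substituting the bound with the calibrated radius
\[
r=\frac{1}{2M\|b\|_{\mathcal{H}}^2}
\]
(the trivial case $b=0$ being handled separately) tunes the $\mathcal{E}(g)$-coefficient to exactly $1/M$. By the very definition of $\mathcal{V}$, the $L^2$-coefficient at this choice of $r$ becomes $2\|b\|_{\mathcal{H}}^2/V(1/(2M\|b\|_{\mathcal{H}}^2))=\mathcal{V}(M\|b\|_{\mathcal{H}}^2)\|b\|_{\mathcal{H}}^2$, which is precisely (\ref{E:HR15}). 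Comparing with (\ref{E:Hardy}) then identifies $\delta(b)=1/M$ and $\gamma(b)=\mathcal{V}(M\|b\|_{\mathcal{H}}^2)\|b\|_{\mathcal{H}}^2$, so $b$ lies in the Hardy class.

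The last assertion reduces to a one-line algebraic check: with these values of $\delta$ one has $\lambda_0=\tfrac12(\lambda-2/\sqrt{M})$, which is positive as soon as $\sqrt{M}>2/\lambda$. I expect no real obstacle in the argument; the only care needed is bookkeeping the two factors of $2$ (one from $(\alpha+\beta)^2\leq 2\alpha^2+2\beta^2$, one built into the definition of $\mathcal{V}$) and observing that $\mathcal{V}$ inherits monotonicity automatically, since $s\mapsto 1/(2s)$ is decreasing and $V$ is non-decreasing. Structurally this is the averaging trick from \cite[Lemma 4.2]{HR16}, tightened here so that the $\mathcal{E}$-coefficient can be made arbitrarily small at the price of an explicit, but possibly larger, $L^2$-coefficient that depends only on $M$ and $\|b\|_{\mathcal{H}}$ through the uniform lower volume bound $V$.
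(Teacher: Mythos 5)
Your argument is correct and follows essentially the same route as the paper's proof: both rest on the resistance estimate (\ref{E:resistanceest}) to control the oscillation of $g$ over resistance balls of radius $r=(2M\left\|b\right\|_{\mathcal{H}}^2)^{-1}$, an averaging against $\mu$ combined with the uniform lower bound (\ref{E:unilowerbound}), and the module bound (\ref{E:boundedaction}); the only cosmetic difference is that you average over $B(x,r)$ for each $x$, whereas the paper fixes one countable cover by such balls and passes to a disjoint Borel partition. One caveat on your final ``one-line algebraic check'': with (\ref{E:smallfields}) as literally printed (with square roots of $\delta(b)$, $\delta(\hat{b})$), your computation gives $\lambda_0=\tfrac12(\lambda-2/\sqrt{M})>0$ exactly when $M>4/\lambda^2$, which is \emph{not} implied by $M>2/\lambda$ when $\lambda<2$; the threshold $M>2/\lambda$ stated in the proposition matches the variant $\lambda_0=\lambda/2-1/M$ used later in the paper, so as written your check verifies a different sufficient condition than the one claimed, and this discrepancy (which originates in the paper's own statement of (\ref{E:smallfields})) should at least be acknowledged rather than passed over.
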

A proof of an inequality of type (\ref{E:HR15}) had already been given in \cite[Lemma 4.2]{HR16}, but the function $\mathcal{V}$ had not been specified and an unnecessary metric doubling assumption had been made. We comment on the necessary modifications.

\begin{proof}
We may assume $\left\|b\right\|_{\mathcal{H}}>0$. Let $\{B_j\}_j$ be a countable cover of $X$ by open balls $B_j$ of radius 
 $r=(2M\left\|b\right\|_{\mathcal{H}}^2)^{-1}$. As in \cite{HR16} we can use (\ref{E:resistanceest}) to see that for any $j$ and any $x\in B_j$ we have 
\[|g(x)|^2\leq 2|g(x)-(g)_{B_j}|^2+2(g)_{B_j}^2 \leq  2\mathcal{E}(g)r+2(g^2)_{B_j},\]
where we use the shortcut notation $(f)_B=\frac{1}{\mu(B)}\int_B f\:d\mu$. Setting $B_0=\emptyset$ and $C_j=B_j\setminus \bigcup_{i=0}^{j-1} B_i$, $j\geq 1$, we obtain a countable Borel partition $\{C_j\}_j$ of $X$ with $C_j\subset B_j$, $j\geq 1$.
Then for any $x\in X$ we have 
\[|g(x)|^2\leq 2\mathcal{E}(g)r+2\sum_j (g^2)_{B_j}\mathbf{1}_{C_j}(x)\leq 2\mathcal{E}(g)r+\frac{2}{V(r)}\left\|g\right\|_{L^2(X,\mu)}^2,\]
and using (\ref{E:boundedaction}) we arrive at the claimed inequality.
\end{proof}

We record two consequences of Proposition \ref{P:Luke}. The first states that if the norms of vector fields in a sequence are uniformly bounded then we may choose uniform constants in the Hardy condition (\ref{E:Hardy}).
\begin{corollary}\label{C:uniformHardy}
Suppose that $\mu$ has a uniform lower bound. If $(b_m)_m\subset\mathcal{H}$ satisfies $\sup_m \left\|b_m\right\|_{\mathcal{H}}<+\infty$ then for any $M>0$ there is a constant $\gamma_M>0$ independent of $m$ such that (\ref{E:Hardy})
holds for each $b_m$ with constants $\delta(b_m)=\frac{1}{M}$ and $\gamma(b_m)=\gamma_M$.
\end{corollary}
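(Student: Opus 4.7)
The plan is to deduce this directly from Proposition \ref{P:Luke}, with no further machinery needed.

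First, set $B:=\sup_m \|b_m\|_{\mathcal{H}}<+\infty$, which is finite by assumption. Fix $M>0$. Applying Proposition \ref{P:Luke} to each $b_m$ separately, we get that $b_m$ satisfies the Hardy condition (\ref{E:Hardy}) with constants $\delta(b_m)=1/M$ (already uniform in $m$) and $\gamma(b_m)=\mathcal{V}(M\|b_m\|_{\mathcal{H}}^2)\|b_m\|_{\mathcal{H}}^2$.

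The only $m$-dependence that remains is in $\gamma(b_m)$, and this is handled by the monotonicity of $\mathcal{V}$: since $\mathcal{V}$ is non-decreasing and $\|b_m\|_{\mathcal{H}}^2\leq B^2$ for every $m$, we estimate
\[
\gamma(b_m)=\mathcal{V}(M\|b_m\|_{\mathcal{H}}^2)\|b_m\|_{\mathcal{H}}^2 \leq \mathcal{V}(MB^2)\,B^2=:\gamma_M.
\]
Setting $\gamma_M:=\mathcal{V}(MB^2)B^2$, which is independent of $m$, gives the claim: each $b_m$ satisfies (\ref{E:Hardy}) with $\delta(b_m)=1/M$ and $\gamma(b_m)=\gamma_M$.

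There is no real obstacle here; the only thing to check is that the bound on $\gamma(b)$ from Proposition \ref{P:Luke} depends monotonically on $\|b\|_{\mathcal{H}}$, which is built into the statement via the non-decreasing function $\mathcal{V}$. Thus the corollary is an immediate packaging of Proposition \ref{P:Luke} under a uniform norm bound, and the proof can be written in a few lines.
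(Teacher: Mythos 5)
Your proof is correct and matches the paper's own argument: the paper likewise sets $\gamma_M=\mathcal{V}(M\sup_m\|b_m\|_{\mathcal{H}}^2)\sup_m\|b_m\|_{\mathcal{H}}^2$ and invokes the monotonicity of $\mathcal{V}$ from Proposition \ref{P:Luke}. Nothing is missing; enlarging $\gamma(b_m)$ only weakens the inequality (\ref{E:Hardy}), so the uniform choice is valid.
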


\begin{proof} Let $\mathcal{V}$ be defined as in Proposition \ref{P:Luke}. Since $\mathcal{V}$ is increasing we can take 
\begin{equation}\label{E:gammam}
\gamma_M:=\mathcal{V} (M \sup_m\left\|b_m\right\|_{\mathcal{H}}^2)\:\sup_m \left\|b_m\right\|_{\mathcal{H}}^2.
\end{equation}
\end{proof}

The second consequence is a continuity statement.
\begin{corollary}\label{C:bmtob}
Suppose that $\mu$ has a uniform lower bound. If $b\in\mathcal{H}$ and $(b_m)_m\subset \mathcal{H}$ is a sequence with $\lim_m b_m=b$ in $\mathcal{H}$ then for any $g\in C_c(X)$ we have 
\[\lim_m \left\|g\cdot b_m-g\cdot b\right\|_{\mathcal{H}}^2=0.\]
\end{corollary}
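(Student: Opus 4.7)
The plan is to show that this is an immediate consequence of the bilinear/module structure of the action of $C_c(X)$ (or $C_b(X)$) on $\mathcal{H}$ together with the bounded action estimate \eqref{E:boundedaction}, so the hypothesis that $\mu$ has a uniform lower bound is carried over from context but will not actually be used.

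First I would observe that the action $v\mapsto g\cdot v$ of a fixed $g\in C_c(X)$ on $\mathcal{H}$ is linear. This is clear from the definition \eqref{E:action} on $\Omega_a^1(X)$ and then passes to the Hilbert space completion $\mathcal{H}$ by the construction of $g\cdot v$ for $v\in\mathcal{H}$ given immediately after the definition of $\mathcal{H}$ (take approximating sequences in $\Omega_a^1(X)$ and pass to the limit). In particular, for each $m$ one has the identity
\[
g\cdot b_m - g\cdot b = g\cdot (b_m-b)
\]
as elements of $\mathcal{H}$.

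Next, I would apply estimate \eqref{E:boundedaction}, which was shown to hold for all $v\in\mathcal{H}$ and all $g\in C_b(X)\supset C_c(X)$ as a consequence of Proposition \ref{P:indep} and dominated convergence. Taking $v=b_m-b$ yields
\[
\left\|g\cdot b_m-g\cdot b\right\|_{\mathcal{H}}
=\left\|g\cdot(b_m-b)\right\|_{\mathcal{H}}
\leq \left\|g\right\|_{\sup}\left\|b_m-b\right\|_{\mathcal{H}}.
\]
Squaring and letting $m\to\infty$, the assumption $\lim_m b_m=b$ in $\mathcal{H}$ gives the claim.

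There is no genuine obstacle here; the only point requiring a mild check is that the action of $g\in C_c(X)$ on $\mathcal{H}$ really is linear and that \eqref{E:boundedaction} applies to arbitrary $v\in\mathcal{H}$, both of which are covered by the discussion following Proposition \ref{P:indep}. I would therefore present the proof in two or three lines, without invoking the uniform lower bound on $\mu$, and note in passing that this hypothesis is inherited from the running assumptions of the subsection rather than being essential to this particular statement.
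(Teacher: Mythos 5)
Your proof is correct, but it takes a genuinely different route from the paper's. The paper deduces the corollary from Proposition \ref{P:Luke}: applying the Hardy-type estimate \eqref{E:HR15} to the field $b_m-b$ and exploiting that the function $\mathcal{V}$ is non-decreasing (hence bounded near $0$, because the uniform lower bound $V$ is strictly positive and non-decreasing), one lets $\left\|b_m-b\right\|_{\mathcal{H}}\to 0$ while sending $M\to\infty$ to kill the term $\frac{1}{M}\mathcal{E}(g)$. That argument, as literally written, requires $\mathcal{E}(g)<+\infty$, i.e.\ $g\in\mathcal{F}\cap C_c(X)$, and reaching all of $C_c(X)$ would need an extra uniform-density step; it also genuinely uses the hypothesis on $\mu$. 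Your argument instead uses only the linearity of the module action $v\mapsto g\cdot v$ on $\mathcal{H}$ and the contraction estimate \eqref{E:boundedaction}, which the paper has already extended to all $v\in\mathcal{H}$ and $g\in C_b(X)$ after Proposition \ref{P:indep}; this gives $\left\|g\cdot(b_m-b)\right\|_{\mathcal{H}}\leq\left\|g\right\|_{\sup}\left\|b_m-b\right\|_{\mathcal{H}}$ in one line, covers every $g\in C_c(X)$ directly, and shows that the uniform lower bound on $\mu$ is not actually needed for this particular statement (it is inherited from the surrounding corollaries, such as Corollary \ref{C:uniformHardy}, where it is essential). What the paper's longer route buys is consistency of presentation: it treats the continuity of $b\mapsto g\cdot b$ with exactly the same Hardy-class machinery that is used for the uniform bounds, whereas your route is the more economical and more general one.
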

\begin{proof}
This is immediate from the definition of the function $\mathcal{V}$ in Proposition \ref{P:Luke} and the fact that the uniform lower bound $V$ of $\mu$ is strictly positive and increasing.
\end{proof}

\subsection{Accumulation points} For the rest of this section we assume that $(\E, \mathcal{F})$ is a resistance form on a nonempty set $X$ so that $(X, R)$ is compact, and that $\mu$ is a finite Borel measure on $(X,R)$ with a uniform lower bound $V$.
Note that by compactness $(\E, \mathcal{F})$ is regular.

For each $m$ let $a_m\in C(X)$ satisfy (\ref{E:triviala}) with the same constants $0<\lambda<\Lambda$. Suppose $M>0$ is large enough so that $\lambda_0:=\lambda/2-1/M>0$ and that $b_m,\hat{b}_m\in\mathcal{H}$ satisfy
\begin{equation}\label{E:bmssup_single}
\sup_m \left\|b_m\right\|_{\mathcal{H}}^2<+\infty\quad \text{and}\quad  \sup_m \big\|\hat{b}_m\big\|_{\mathcal{H}}^2<+\infty.
\end{equation}
Let $\gamma_M$ be as in (\ref{E:gammam}), let $\hat{\gamma}_M$ defined in the same way with the $\hat{b}_m$ replacing the $b_m$
and suppose that $c\in L^\infty(X,\mu)$ is such that  
\begin{equation}\label{E:conseqforfieldssup}
c_0:=\essinf_{x\in X} \left(-c(x)\right)-\frac{\gamma_M+\hat{\gamma}_M}{\lambda-2/M}>0.
\end{equation}
Then by Proposition \ref{P:Qisnice} and Corollary \ref{C:uniformHardy} the forms 
\begin{equation}\label{E:Qm}
\mathcal{Q}_{(m)}(f,g):= \left\langle a_m\cdot\partial f, \partial g\right\rangle_{\mathcal{H}}-\left\langle g\cdot b_m,\partial f\right\rangle_{\mathcal{H}}-\big\langle f\cdot \hat{b}_m, \partial g\big\rangle_{\mathcal{H}}-\left\langle cf,g\right\rangle_{L^2(X,\mu)}, \quad f,g\in \mathcal{F},
\end{equation}
are sectorial closed forms on $L^2(X,\mu)$. They satisfy (\ref{E:posdef}) with $\delta(b_m)=\delta(\hat{b}_m)=1/M$ 
and $\gamma_M$, $\hat{\gamma}_M$ replacing $\gamma(b)$, $\gamma(\hat{b})$ in (\ref{E:upperconst}). Their generators $(\mathcal{L}^{\mathcal{Q}_{(m)}},\mathcal{D}(\mathcal{L}^{\mathcal{Q}_{(m)}}))$ satisfy the sector conditions (\ref{E:sectorcond}) with the same sector constant $K$. As a consequence we observe uniform energy bounds for the solutions of (\ref{E:ellipticeq}) and (\ref{E:paraboliceq}). We write $\mathcal{Q}_{(m),\alpha}$ for the form defined as $\mathcal{E}_\alpha$ in (\ref{E:addalpha}) but with $\mathcal{Q}_{(m)}$ in place of $\mathcal{E}$.

\begin{proposition}\label{P:accusingle}
Let $a_m$, $b_m$, $\hat{b}_m$ and $c$ be as above such that (\ref{E:bmssup_single}) and (\ref{E:conseqforfieldssup}) hold. 
\begin{enumerate}
\item[(i)] If $f\in L^2(X,\mu)$ and $u_m$ is the unique weak solution to (\ref{E:ellipticeq}) with $\mathcal{L}^{\mathcal{Q}_{(m)}}$ in place of $\mathcal{L}$, then we have $\sup_m \mathcal{Q}_{(m),1}(u_m)<+\infty$. 
\item[(ii)] If $\mathring{u}\in L^2(X,\mu)$ and $u_m$ is the unique solution to (\ref{E:paraboliceq}) with $\mathcal{L}^{\mathcal{Q}_{(m)}}$ in place of $\mathcal{L}$, then for any $t>0$ we have $\sup_m \mathcal{Q}_{(m),1}(u_m(t))<+\infty$.
\end{enumerate}
\end{proposition}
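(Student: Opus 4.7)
The essential point is that under hypotheses (\ref{E:bmssup_single}) and (\ref{E:conseqforfieldssup}), \emph{all} the structural constants entering the bounds of Corollaries \ref{C:solutionelliptic} and \ref{C:solutionparabolic} are uniform in $m$, so the result will reduce to simply invoking those corollaries for each $\mathcal{Q}_{(m)}$. The plan is therefore to carefully track the uniformity and then read off the statement.

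First I will record the uniform Hardy data. By Corollary \ref{C:uniformHardy} applied to the bounded sequences $(b_m)_m$ and $(\hat b_m)_m$ in $\mathcal{H}$, for the chosen $M>0$ each $b_m$ satisfies (\ref{E:Hardy}) with $\delta(b_m)=1/M$ and a common constant $\gamma(b_m)\leq \gamma_M$, and analogously $\delta(\hat b_m)=1/M$ and $\gamma(\hat b_m)\leq \hat\gamma_M$. Plugging these into formula (\ref{E:smallfields}) of Proposition \ref{P:Qisnice}(i) gives the same lower ellipticity threshold $\lambda_0=\lambda/2-1/M>0$ for every $m$, and hypothesis (\ref{E:conseqforfieldssup}) is exactly the $m$-independent version of (\ref{E:cisnice}) with this $\lambda_0$. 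Consequently the constants $\lambda_0,c_0$ and the upper constants $\Lambda_\infty,c_\infty$ from (\ref{E:upperconst}) (which only depend on $\lambda,\Lambda,M,\gamma_M,\hat\gamma_M$ and $\|c\|_{L^\infty}$) are all independent of $m$. Substituting these into (\ref{E:possibleK}) yields a common sector constant $K$ valid for every generator $(\mathcal{L}^{\mathcal{Q}_{(m)}},\mathcal{D}(\mathcal{L}^{\mathcal{Q}_{(m)}}))$.

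For part (i), for each $m$ we apply Corollary \ref{C:solutionelliptic} to the form $\mathcal{Q}_{(m)}$: the unique weak solution $u_m\in\mathcal{D}(\mathcal{L}^{\mathcal{Q}_{(m)}})$ to $\mathcal{L}^{\mathcal{Q}_{(m)}}u_m=f$ satisfies
\[
\mathcal{Q}_{(m),1}(u_m)\leq \Big(\tfrac{2}{c_0}+\tfrac{4}{c_0^2}\Big)\|f\|_{L^2(X,\mu)}^2.
\]
Since $c_0$ is the same number for all $m$, taking the supremum over $m$ gives the claim. For part (ii), for each $m$ we apply Corollary \ref{C:solutionparabolic} to $\mathcal{Q}_{(m)}$: the unique solution $u_m$ of the Cauchy problem satisfies
\[
\mathcal{Q}_{(m),1}(u_m(t))\leq \Big(\tfrac{C_K}{t}+1\Big)\|\mathring u\|_{L^2(X,\mu)}^2, \quad t>0,
\]
and the constant $C_K$, depending only on $K$, is the same for every $m$ by the previous paragraph. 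Taking the supremum gives the desired uniform energy bound.

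There is no substantive obstacle beyond bookkeeping: the nontrivial work has been done in Corollary \ref{C:uniformHardy} and Proposition \ref{P:Qisnice}, which together guarantee that the sequence of forms $(\mathcal{Q}_{(m)})_m$ behaves like a single form with fixed ellipticity, sector and lower-bound constants. The slightly delicate point to flag is that the \emph{upper} part of the bound $\mathcal{Q}_{(m),1}(u_m)\leq \Lambda_\infty \mathcal{E}(u_m)+(c_\infty+1)\|u_m\|_{L^2}^2$ implicit in the statement depends on the uniform upper constants from (\ref{E:upperconst}), and one should point out that these too are controlled by $\sup_m\|b_m\|_{\mathcal{H}}$ and $\sup_m\|\hat b_m\|_{\mathcal{H}}$ through $\gamma_M$ and $\hat\gamma_M$ — which is precisely what Corollary \ref{C:uniformHardy} provides.
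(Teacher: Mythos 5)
Your proof is correct and follows essentially the same route as the paper: the hypotheses (\ref{E:bmssup_single}) and (\ref{E:conseqforfieldssup}), via Corollary \ref{C:uniformHardy} and Proposition \ref{P:Qisnice}, make the constants $c_0$ and $K$ uniform in $m$, and the bounds of Corollaries \ref{C:solutionelliptic} and \ref{C:solutionparabolic} then give the claim. The paper states this in one sentence; your version merely spells out the bookkeeping.
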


\begin{proof}
Since (\ref{E:conseqforfieldssup}) and (\ref{E:sectorcond}) hold with the same constants $c_0$ and $K$ for all $m$, the statements follow from Corollaries \ref{C:solutionelliptic} and \ref{C:solutionparabolic}.
\end{proof}

The compactness of $X$ implies the existence of accumulation points in $C(X)$.

\begin{corollary}\label{C:KigamiAA}
Let $a_m$, $b_m$, $\hat{b}_m$ and $c$ be as above such that (\ref{E:bmssup_single}) and (\ref{E:conseqforfieldssup}) are satisfied. 
\begin{enumerate}
\item[(i)] If $f\in L^2(X,\mu)$ and $u_m$ is the unique weak solution to (\ref{E:ellipticeq}) with $\mathcal{L}^{\mathcal{Q}_{(m)}}$ in place of $\mathcal{L}^\mathcal{Q}$, then each subsequence of $(u_m)_m$ has a subsequence converging to a limit $\widetilde{u}\in \mathcal{F}$ uniformly on $X$. 
\item[(ii)] If $\mathring{u}\in L^2(X,\mu)$ and $u_m$ is the unique solution to (\ref{E:paraboliceq}) with $\mathcal{L}^{\mathcal{Q}_{(m)}}$ in place of $\mathcal{L}^\mathcal{Q}$, then for each $t>0$ each subsequence of $(u_m(t))_m$ has a further subsequence converging to a limit $\widetilde{u}_t\in \mathcal{F}$ uniformly on $X$. 
\end{enumerate}
\end{corollary}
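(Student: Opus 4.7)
The plan is to combine the uniform energy bounds from Proposition \ref{P:accusingle} with the resistance estimate \eqref{E:resistanceest} to obtain equicontinuity, and then apply Arzel\`a--Ascoli on the compact metric space $(X,R)$.

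First, by Proposition \ref{P:accusingle} we have $\sup_m \mathcal{Q}_{(m),1}(u_m)<+\infty$ in case (i), and for any fixed $t>0$, $\sup_m \mathcal{Q}_{(m),1}(u_m(t))<+\infty$ in case (ii). Because the coefficients $a_m$, $b_m$, $\hat{b}_m$ satisfy the hypotheses of Proposition \ref{P:Qisnice} with \emph{the same} constants $\lambda_0$ and $c_0$ (thanks to Corollary \ref{C:uniformHardy} and \eqref{E:conseqforfieldssup}), the lower bound in \eqref{E:posdef} applies uniformly in $m$. Hence there is a constant $C>0$, independent of $m$ (depending on $t$ in case (ii)), with
\begin{equation*}
\mathcal{E}(u_m)+\|u_m\|_{L^2(X,\mu)}^{2}\leq C.
\end{equation*}

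Next, from \eqref{E:resistanceest} we obtain $|u_m(x)-u_m(y)|^2\leq C\,R(x,y)$ for all $x,y\in X$ and all $m$, i.e.\ the family $(u_m)_m$ is equi-H\"older of order $1/2$ on $(X,R)$. To upgrade this to a uniform sup-norm bound, fix $x\in X$; integrating the pointwise inequality $|u_m(x)|\leq |u_m(y)|+\sqrt{C\,R(x,y)}$ against $\mu$ and using Cauchy--Schwarz together with $\sup_{y\in X}R(x,y)<\infty$ (since $(X,R)$ is compact) and $\mu(X)<+\infty$, one finds that $\sup_m\|u_m\|_{\sup}<+\infty$. Since $(X,R)$ is compact, Arzel\`a--Ascoli produces, from any subsequence of $(u_m)_m$, a further subsequence $(u_{m_k})_k$ converging uniformly to some $\widetilde{u}\in C(X)$.

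It remains to argue that $\widetilde{u}\in\mathcal{F}$. Since $X$ is compact, $\mathcal{D}(\mathcal{E})=\mathcal{F}$, and $(\mathcal{F}/\!\!\sim,\mathcal{E})$ is a Hilbert space; combined with the uniform $L^2$ bound, the sequence $(u_{m_k})_k$ is bounded in the Hilbert space $(\mathcal{F},\mathcal{E}_1)$. Extract a further subsequence (not relabelled) that converges weakly in $(\mathcal{F},\mathcal{E}_1)$ to some $\widetilde{v}\in\mathcal{F}$. Weak convergence in $(\mathcal{F},\mathcal{E}_1)$ implies weak convergence in $L^2(X,\mu)$; on the other hand, the uniform convergence $u_{m_k}\to \widetilde{u}$ together with finiteness of $\mu$ yields strong $L^2$-convergence to $\widetilde{u}$. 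Thus $\widetilde{u}=\widetilde{v}\in\mathcal{F}$, as desired. The same argument applied at each fixed $t>0$ handles part (ii).

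The main (mildly delicate) step is the identification of the uniform limit as an element of $\mathcal{F}$: uniform convergence alone is not enough, and one must exploit the Hilbert space structure of $\mathcal{E}_1$ and weak compactness. Everything else is a routine combination of the energy estimates of Corollaries \ref{C:solutionelliptic}, \ref{C:solutionparabolic}, the resistance estimate \eqref{E:resistanceest}, and compactness of $(X,R)$.
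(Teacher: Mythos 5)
Your proof is correct and follows essentially the same route as the paper: uniform energy bounds from Proposition \ref{P:accusingle} and the uniform lower bound in (\ref{E:posdef}), compactness in $C(X)$, and identification of the uniform limit as an element of $\mathcal{F}$ via weak compactness in $(\mathcal{F},\mathcal{E}_1)$. The only difference is that where the paper simply cites the compact embedding $\mathcal{F}\subset C(X)$ from \cite[Lemma 9.7]{Ki12}, you reprove it by hand via the resistance estimate (\ref{E:resistanceest}), a uniform sup-norm bound and Arzel\`a--Ascoli, which is a valid (and self-contained) substitute.
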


At this point we can of course not claim that the $C(X)$-valued function $t\mapsto \widetilde{u}_t$ has any good properties or significance.

\begin{proof}
Since all $\mathcal{Q}_{(m)}$ satisfy (\ref{E:posdef}) with the same constants, Proposition \ref{P:accusingle} implies that $\sup_m\mathcal{E}_1(u_m)<+\infty$. By \cite[Lemma 9.7]{Ki12} the embedding $\mathcal{F}\subset C(X)$ is compact, hence
$(u_m)_m$ has a subsequence that converges uniformly on $X$ to a limit $\widetilde{u}$. To see that $\widetilde{u}\in\mathcal{F}$, note that also this subsequence is bounded in $\mathcal{F}$ and therefore has a further subsequence that converges to a limit $\widetilde{w}\in\mathcal{F}$ weakly in $L^2(X,\mu)$, as follows from a Banach-Saks type argument. This forces $\widetilde{w}=\widetilde{u}$. Statement (ii) is proved in the same manner.
\end{proof}

\subsection{Strong resolvent convergence}
Let $(\mathcal{E},\mathcal{F})$ and $\mu$ be as in the preceding subsection. Let $a\in \mathcal{F}$ be such that (\ref{E:triviala}) holds with constants $0<\lambda<\Lambda$ and let $(a_m)_m\subset C(X)$ be such that 
\begin{equation}\label{E:approxa}
\lim_m \left\|a_m - a \right\|_{\sup}=0.
\end{equation}
Without loss of generality we may then assume that also the functions $a_m$ satisfy (\ref{E:triviala}) with the very same constants $0<\lambda<\Lambda$. Suppose $M>0$ is large enough so that $\lambda_0:=\lambda/2-1/M>0$. Let $b,\hat{b}\in\mathcal{H}$ and let $(b_m)_m \subset \mathcal{H}$ and $(\hat{b}_m)_m\subset \mathcal{H}$ be sequences such that 
\begin{equation}\label{E:bmslim}
\lim_m \left\|b_m-b\right\|_{\mathcal{H}}=0 \quad \text{ and } \quad \lim_m \big\|\hat{b}_m -\hat{b}\big\|_{\mathcal{H}}=0.
\end{equation}
Note that this implies (\ref{E:bmssup_single}). Let $\gamma_M$ be as in (\ref{E:gammam}) and $\hat{\gamma}_M$ similarly but with the $\hat{b}_m$, and suppose that $c\in L^\infty(X,\mu)$ satisfies (\ref{E:conseqforfieldssup}). Let $\mathcal{Q}$ be as in (\ref{E:Q}) and $\mathcal{Q}_{(m)}$ as in (\ref{E:Qm}).

The next theorem states that the solutions to (\ref{E:ellipticeq}) and (\ref{E:paraboliceq}) depend continuously on the coefficients $a$, $b$ and $\hat{b}$. It is based on \cite[Theorem 3.1]{Hino98}.

\begin{theorem}\label{T:approxsame}
Let $a$, $a_m$, $b$, $b_m$, $\hat{b}$ and $\hat{b}_m$ be as above such that (\ref{E:approxa}) and (\ref{E:bmslim}) hold.  Then $\lim_m \mathcal{L}^{\mathcal{Q}_{(m)}} = \mathcal{L}^{\mathcal{Q}}$ in the strong resolvent sense, and the following hold.
\begin{enumerate}
\item[(i)] If $f\in L^2(X,\mu)$, $u$ and $u_m$ are the unique weak solutions to (\ref{E:ellipticeq}) and to (\ref{E:ellipticeq}) with $\mathcal{L}^{\mathcal{Q}_{(m)}}$ in place of $\mathcal{L}^\mathcal{Q}$, respectively, then $\lim_m u_m=u$ in $L^2(X,\mu)$. Moreover, there is a sequence $(m_k)_k$ with $m_k\uparrow +\infty$ such that $\lim_k u_{m_k} =u$ uniformly on $X$. 
\item[(ii)] If $\mathring{u}\in L^2(X,\mu)$, and $u$ and $u_m$ are the unique solutions to (\ref{E:paraboliceq}) and to (\ref{E:paraboliceq}) with $\mathcal{L}^{\mathcal{Q}_{(m)}}$ in place of $\mathcal{L}$, then for any $t>0$ we have $\lim_m u_m(t)=u(t)$ in $L^2(X,\mu)$. Moreover, for any $t>0$ there is a sequence $(m_k)_k$ with $m_k\uparrow +\infty$ such that $\lim_k u_{m_k}(t) =u(t)$ uniformly on $X$. 
\end{enumerate}
\end{theorem}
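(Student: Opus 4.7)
The plan is to establish a Mosco-type convergence of the non-symmetric forms $\mathcal{Q}_{(m)}$ to $\mathcal{Q}$ in the variational sense of \cite[Theorem 3.1]{Hino98}, deduce strong resolvent convergence of the generators, and then combine the resulting $L^2$-convergence with the uniform energy bounds of Proposition \ref{P:accusingle} and the compact embedding $\mathcal{F}\subset C(X)$ to upgrade to uniform convergence along subsequences.

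Because $X$ is compact, all forms $\mathcal{Q}_{(m)}$ and $\mathcal{Q}$ share the common domain $\mathcal{F}$, which simplifies the verification. For the recovery half of the Mosco condition, given any $u\in\mathcal{F}$ I take the constant sequence $u_m=u$: the diffusion term $\left\langle a_m\cdot\partial u,\partial u\right\rangle_{\mathcal{H}}$ converges to $\left\langle a\cdot\partial u,\partial u\right\rangle_{\mathcal{H}}$ via (\ref{E:approxa}), the integral representation of Proposition \ref{P:indep}, and dominated convergence, while the drift terms converge by Corollary \ref{C:bmtob}. For the liminf half, suppose $u_m\to u$ weakly in $L^2(X,\mu)$ with $\sup_m\widetilde{\mathcal{Q}_{(m)}}(u_m)<+\infty$; the uniform lower bound (\ref{E:posdef}) then yields $\sup_m\mathcal{E}(u_m)<+\infty$, so along a subsequence $\partial u_{m_k}\to\partial u$ weakly in $\mathcal{H}$ and $u_{m_k}\to u$ uniformly on $X$ by \cite[Lemma 9.7]{Ki12}. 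The symmetric principal part passes to the liminf by weak lower semicontinuity combined with uniform convergence $a_{m_k}\to a$, and the nonsymmetric drift contributions are passed to the limit by pairing the weak $\mathcal{H}$-convergence of $\partial u_{m_k}$ against the strong $\mathcal{H}$-convergence $v\cdot b_{m_k}\to v\cdot b$ from Corollary \ref{C:bmtob} and against $u_{m_k}\cdot\hat{b}_{m_k}\to u\cdot\hat{b}$, the latter via (\ref{E:boundedaction}) applied to the differences, using the uniform bound of $u_{m_k}$ in $C(X)$.

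Once Mosco--Hino convergence is in place, \cite[Theorem 3.1]{Hino98} gives $\mathcal{L}^{\mathcal{Q}_{(m)}}\to\mathcal{L}^{\mathcal{Q}}$ in the strong resolvent sense. Part (i) then follows at once: $u_m=-G^{\mathcal{Q}_{(m)}}f\to-G^{\mathcal{Q}}f=u$ in $L^2(X,\mu)$. For part (ii) all generators share a common sector constant $K$ by (\ref{E:sectorcond}), so the semigroups $(T_t^{\mathcal{Q}_{(m)}})$ are holomorphic in a common sector and a Trotter--Kato type argument transfers strong resolvent convergence to strong pointwise-in-$t$ semigroup convergence, giving $u_m(t)\to u(t)$ in $L^2(X,\mu)$. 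To upgrade to the uniform statements, invoke Proposition \ref{P:accusingle} to get $\sup_m\mathcal{Q}_{(m),1}(u_m)<+\infty$, hence $\sup_m\mathcal{E}_1(u_m)<+\infty$ (respectively $\sup_m\mathcal{E}_1(u_m(t))<+\infty$ for each fixed $t>0$), apply the compact embedding $\mathcal{F}\subset C(X)$ to extract a uniformly convergent subsequence, and identify its limit with $u$ (respectively $u(t)$) via the $L^2$-convergence just established.

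The main obstacle I anticipate is the liminf verification for the first order terms: one has to pass to the limit in pairings where both the function factor and the vector field factor vary with $m$ simultaneously. This is handled only through the interplay between the strong $\mathcal{H}$-convergence provided by Corollary \ref{C:bmtob} and the uniform-in-$X$ convergence of bounded-energy subsequences coming from the compact embedding $\mathcal{F}\subset C(X)$, a genuinely resistance-form specific feature that compensates for the absence of a carr\'e du champ operator.
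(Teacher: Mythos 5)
Your overall architecture coincides with the paper's: establish a Hino-type variational convergence of the forms, deduce strong resolvent convergence and hence $L^2$-convergence of the solutions, and then use the uniform energy bounds of Proposition \ref{P:accusingle} together with the compact embedding $\mathcal{F}\subset C(X)$ to extract uniformly convergent subsequences and identify their limits. That last upgrading step and the parabolic case are fine. The gap lies in the central verification: you check the \emph{classical symmetric} Mosco conditions --- a recovery sequence giving $\lim_m\mathcal{Q}_{(m)}(u,u)=\mathcal{Q}(u,u)$ and a liminf inequality $\varliminf_m\mathcal{Q}_{(m)}(u_m,u_m)\geq\mathcal{Q}(u,u)$ via weak lower semicontinuity of the principal part --- and then invoke \cite[Theorem 3.1]{Hino98}. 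Those are not the hypotheses of that theorem. For non-symmetric forms the diagonal $\mathcal{Q}_{(m)}(u,u)=\widetilde{\mathcal{Q}}_{(m)}(u,u)$ sees only $a_m$, $c$ and the \emph{sum} $b_m+\hat{b}_m$: two coefficient sequences with the same $b_m+\hat{b}_m$ but different splittings have identical diagonals yet genuinely different generators (the drift term $b\cdot\partial u$ is not the operator $\partial^\ast(u\cdot\hat{b})$). No condition formulated purely on the diagonal can therefore imply strong resolvent convergence of the $\mathcal{L}^{\mathcal{Q}_{(m)}}$. What \cite[Theorem 3.1]{Hino98} (equivalently Definition \ref{D:generalized convergence} via Theorem \ref{T:convergence of forms} and Remark \ref{R:simplify}) requires is the off-diagonal condition: for every test function $w$ and every sequence $u_{m_k}\to u$ weakly in $L^2(X,\mu)$ with $\sup_k\widetilde{\mathcal{Q}}_{(m_k),1}(u_{m_k})<+\infty$ one must produce $w_k\to w$ strongly with $\varliminf_k\mathcal{Q}_{(m_k)}(w_k,u_{m_k})\leq\mathcal{Q}(w,u)$, together with the closedness condition (i). It is exactly the pairing against test functions that separates $b$ from $\hat{b}$, and its verification is the substance of the paper's proof.

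The ingredients you assemble do suffice to verify the correct condition with $w_k\equiv w$: boundedness of $\mathcal{E}(u_{m_k})$ gives, after passing to a subsequence and a Banach--Saks identification of the limit, that $u_{m_k}\to u$ uniformly and $\partial u_{m_k}\to\partial u$ weakly in $\mathcal{H}$ (using that $\im\partial$ is a closed subspace of $\mathcal{H}$); then $\left\langle a_{m_k}\cdot\partial w,\partial u_{m_k}\right\rangle_{\mathcal{H}}$, $\left\langle u_{m_k}\cdot b_{m_k},\partial w\right\rangle_{\mathcal{H}}$ and $\big\langle w\cdot\hat{b}_{m_k},\partial u_{m_k}\big\rangle_{\mathcal{H}}$ each converge as a strongly convergent factor paired against a fixed or weakly convergent one, by (\ref{E:approxa}), (\ref{E:bmslim}), (\ref{E:boundedaction}) and Corollary \ref{C:bmtob}; condition (i) of Definition \ref{D:generalized convergence} follows from the two-sided form comparison (\ref{E:compareforms}). (The paper reaches the same conclusions by a more hands-on route through energy measures, the Leibniz rule and the decomposition $\hat{b}=\partial f+\eta$.) So the proof is repairable, but as written it only establishes convergence of the symmetric parts of the forms.
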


\begin{proof}
By \cite[Theorem 3.1]{Hino98}, the claimed strong resolvent convergence and the stated convergences in $L^2(X,\mu)$ follow once we have verified the conditions in Definition \ref{D:generalized convergence}, see Theorem \ref{T:convergence of forms} and Remark \ref{R:simplify} in Appendix \ref{S:generalized convergence}. The statements on uniform convergence then also follow using Corollary \ref{C:KigamiAA}. 

Without loss of generality we may assume that the function $c\in L^\infty(X, \mu)$ satisfies condition $\eqref{E:conseqforfieldssup}$. If not, proceed similarly as in Remark \ref{R:more_general_c_possible} and replace $c$ by $c-c_1$, where $c_1>0$ is large enough so that with $c_0$ as defined as in (\ref{E:conseqforfieldssup}) we have $c_1+c_0>0$, and consider the forms $(\mathcal{Q}_{(m),c_1}, \mathcal{F})$ with generators $(\mathcal{L}^{\mathcal{Q}_{(m)}} - c_1, \D(\mathcal{L}^{\mathcal{Q}_{(m)}}))$. If $\lim_{m\rightarrow\infty} \mathcal{L}^{\mathcal{Q}_{(m)}}-c_1 = \mathcal{L}^{\mathcal{Q}}-c_1$ in the KS-generalized strong resolvent sense then also $\lim_{m\rightarrow\infty} \mathcal{L}^{\mathcal{Q}_{(m)}}=\mathcal{L}^{\mathcal{Q}}$ in the KS-generalized strong resolvent sense. The statements on uniform convergence then follow using Corollary \ref{C:KigamiAA} and Remark \ref{R:more_general_c_possible}, note that for all $m$ and $u\in \mathcal{F}$ we have $\mathcal{Q}_{(m)}(u)\leq \mathcal{Q}_{(m),c_1}(u)$.

Thanks to (\ref{E:smallfields}), (\ref{E:cisnice}), (\ref{E:posdef}) and (\ref{E:upperconst}) together with Proposition \ref{P:Luke} and Corollaries \ref{C:uniformHardy} and \ref{C:bmtob} we can find  a constant $C>0$ such that for every sufficiently large $m$ we have
\begin{equation}\label{E:compareforms}
C\E_1(f) \leq \mathcal{Q}_{(m),1}(f)\leq C^{-1}\E_1(f), \quad f \in \mathcal{F}.
\end{equation}

Suppose that $\lim_m u_m =u$ weakly in $L^2(X,\mu)$ with $\varliminf_m Q_{(m),1}(u_m, u_m)<+\infty$. Then there is a subsequence $(u_{m_k})_k$ such that $\sup_k \mathcal{Q}_{(m_k),1}(u_{m_k})<+\infty$, and by (\ref{E:compareforms}) we have 
$\sup_k \E_1(u_{m_k}, u_{m_k})< +\infty$. A subsequence of $(u_{m_k})_k$ converges to a limit $u_\mathcal{E}\in\mathcal{F}$ weakly in $(\mathcal{F},\mathcal{E})$ and standard Banach-Saks type argument shows that $u_\mathcal{E}=u$,
what proves condition (i) in Definition \ref{D:generalized convergence}.
 
To verify condition (ii) in Definition \ref{D:generalized convergence} suppose that $(m_k)_k$ be a sequence of natural numbers with $m_k \uparrow \infty$, $\lim_k u_k = u$ weakly in $L^2(X, \mu)$ with $\sup_k Q_{(m_k),1} (u_k, u_k)<\infty$ and $u \in \mathcal{F}$. By (\ref{E:compareforms}) we have $\sup_k \mathcal{E}_1 (u_k)<\infty$, what implies that $(u_k)_k$ has a subsequence $(u_{k_j})_j$ converging to $u\in\mathcal{F}$ weakly in $\mathcal{F}$ and uniformly on $X$, and such that its averages $N^{-1}\sum_{j=1}^N u_{k_j}$ converge to $u$ in $\mathcal{F}$. Here the statement on uniform convergence is again a consequence of the compact embedding $\mathcal{F}\subset C(X)$, \cite[Lemma 9.7]{Ki12}. Combined with the weak convergence in $L^2(X,\mu)$ it follows that $(u_{k_j})_j$ converges weakly to $u$ in $(\mathcal{F}/\sim,\mathcal{E})$. Moreover, using (\ref{E:BD}), the convergence of averages and the linearity of $d_u$ we may assume that $(d_uu_{k_j})_j$ converges to $d_uu$ weakly in $L^2(X\times X\setminus \diag, J)$. As a consequence, we also have 
\begin{align}
\lim_j \mathcal{E}^c(u_{k_j},v)&=\lim_j \mathcal{E}(u_{k_j},v)-\lim_j\int_X\int_X d_uu_{k_j}(x,y)d_uv(x,y) J(dxdy)\notag\\
&= \mathcal{E}(u,v)-\int_X\int_X d_uu(x,y)d_uv(x,y) J(dxdy)\notag\\
&=\mathcal{E}^c(u,v)\notag
\end{align}
for all $v\in\mathcal{F}$. 

Now let $w \in \mathcal{F}$. Then we have
\begin{align}\label{E:splitQs}
\vert Q_{(m_{k_j})}(w, u_{k_j}) - Q(w,u) \vert \leq \vert  Q_{(m_{k_j})}(w, u_{k_j}) - Q(w,u_{k_j})\vert + \vert Q(w, u_{k_j} -u) \vert.
\end{align}  
Since $c$ is kept fixed, the first summand on the right hand side of the inequality (\ref{E:splitQs}) is bounded by 
\begin{multline}
\left\vert \left\langle (a_{m_{k_j}}-a)\cdot \partial w,\partial u_{k_j}\right\rangle_{\mathcal{H}}\right\vert + \left\vert \big\langle u_{k_j}\cdot (b_{m_{k_j}}- b), \partial w\big\rangle_{\mathcal{H}}\right\vert +
\left\vert \left\langle w\cdot (\hat{b}_{m_{k_j}}-\hat{b}),\partial u_{k_j}\right\rangle_{\mathcal{H}}\right\vert \notag\\
\leq \Vert a_{m_{k_j}}-a\Vert_{\sup} \mathcal{E}(w)^{1/2}\mathcal{E}(u_{k_j})^{1/2} + \Vert  u_{k_j} \Vert_{\sup}\Vert b_{m_{k_j}}-b \Vert_{\mathcal{H}} \E(w)^{1/2}+\Vert  w \Vert_{\sup}\Vert \hat{b}_{m_{k_j}}-\hat{b} \Vert_{\mathcal{H}} \E(u_{k_j})^{1/2},
\end{multline}
where we have used Cauchy-Schwarz and (\ref{E:boundedaction}). By the hypotheses on the coefficients and the boundedness of  $(u_{k_j})_j$ in energy and in uniform norm this converges to zero. The second summand on the right hand side of (\ref{E:splitQs}) is bounded by
\[\vert \left\langle \partial w, a\cdot\partial (u_{k_j} - u)\right\rangle_{\mathcal{H}} \vert + \vert \langle (u_{k_j} - u)\cdot b, \partial w\rangle_{\mathcal{H}} \vert + \vert \langle w\cdot \hat{b} , \partial( u_{k_j} - u) \rangle_{\mathcal{H}}\vert + \vert \langle cw, u_{k_j} - u \rangle_{L^2(X,\mu)}\vert. \]
The last summand in this line obviously converges to zero, and also the second does, note that $ \vert \langle (u_{k_j} - u)\cdot b, \partial w\rangle_{\mathcal{H}} \vert \leq  \Vert u_{k_j} - u \Vert_{\sup} \Vert b \Vert_{\mathcal{H}}\E(w)^{1/2} $ by Cauchy-Schwarz and (\ref{E:boundedaction}). By Proposition \ref{P:indep} we have  
\[\left\langle \partial w, a\cdot\partial (u_{k_j} - u)\right\rangle_{\mathcal{H}}=\int_X a\:d\nu^c_{w,u_{k_j} - u}+\int_X\int_X  \overline{a}(x,y)d_uw(x,y)d_u(u_{k_j} - u)(x,y)\:J(dxdy).\]
Since $\left\|\overline{a}d_uw\right\|_{L^2(X\times X\setminus \diag, J)}\leq \left\|a\right\|_{\sup}\mathcal{E}(w)^{1/2}$, the double integral converges to zero by the weak convergence of $(d_uu_{k_j})_j$ to $d_uu$ in $L^2(X\times X\setminus \diag, J)$. By (\ref{E:pointwisemult}) we have $\sup_j \mathcal{E}_1(au_{k_j})^{1/2}<+\infty$ and $\mathcal{E}_1(wu_{k_j})^{1/2}<+\infty$. Thinning out the sequence $(u_{k_j})_j$ once more we may, using the arguments above, assume that $\lim_j \mathcal{E}^c(a u_{k_j},v)=\mathcal{E}^c(au,v)$ and $\lim_j \mathcal{E}^c(w u_{k_j},v)=\mathcal{E}^c(wu,v)$ for all $v\in\mathcal{F}$. Then  also
\[\int_X a\:d\nu^c_{w,u_{k_j} - u}=\frac12\left\lbrace \mathcal{E}^c(aw, u_{k_j}-u) + \mathcal{E}^c(a(u_{k_j}-u),w) - \mathcal{E}^c(w(u_{k_j}-u),a) \right\rbrace\]
converges to zero. Together this implies that $\lim_j \left\langle \partial w, a\cdot\partial (u_{k_j} - u)\right\rangle_{\mathcal{H}}=0$. Finally, note that by the Leibniz rule for $\partial$, 
\[\big\langle \hat{b}, w\cdot \partial(u_{k_j}-u)\big\rangle_\mathcal{H}= \big\langle \hat{b},\partial(w(u_{k_j}-u))\big\rangle_\mathcal{H}-\big\langle\hat{b},(u_{k_j}-u)\cdot\partial w\big\rangle_{\mathcal{H}}.\]
As before we see easily that the second summand on the right hand side goes to zero. For the first, let $\hat{b}=\partial f+\eta$ be the unique decomposition of $\hat{b}\in\mathcal{H}$ into a gradient $\partial f$ of a function $f\in\mathcal{F}$ and a 'divergence free' vector field $\eta\in \ker \partial^\ast$. Then 
\[\big\langle \hat{b},\partial(w(u_{k_j}-u))\big\rangle_\mathcal{H}=\big\langle \partial f,\partial(w(u_{k_j}-u))\big\rangle_\mathcal{H}=\mathcal{E}(f,w(u_{k_j}-u)),\]
which converges to zero by the preceding arguments. Combining, we see that \[\lim_j \mathcal{Q}^{(n_{k_j})}(w,u_{k_j})=\mathcal{Q}(w,u),\] 
and since $w\in \mathcal{F}$ was arbitrary, this implies condition (ii) in Definition \ref{D:generalized convergence}.
\end{proof}

\begin{examples}
The basic requirements for Theorem \ref{T:approxsame} are that the resistance form $(\mathcal{E},\mathcal{F})$ is regular, the space $(X,R)$ is compact, and that the Borel measure $\mu$ on $(X,R)$ has a uniform lower bound. In particular, $\mu$ does not have to satisfy a volume doubling property. Possible examples can for instance be found amongst finite graphs, \cite{Grigoryan, KLWbook}, compact metric graphs, \cite{FKW07, KS99, KS00, Ku04, Ku05, Post12, Mugnolo14}, p.c.f. self-similar sets, \cite{BarlowPerkins, Ki89, Ki93, Ki93b, Ki01, Ku89, Ku93, Meyers}, classical Sierpinski carpets, \cite{BB89, BBKT10}, certain Julia sets, \cite{RogersTeplyaev10}, and certain random fractals, \cite{Hambly92, Hambly97}. 
\end{examples}

\section{Convergence of solutions on varying spaces}\label{S:varying}

In this section we basically repeat the approximation program from Section \ref{S:single}, but now on varying resistance spaces. More precisely, we study the convergence of suitable linearizations of solutions to (\ref{E:ellipticeq}) and (\ref{E:paraboliceq}) on approximating spaces $X^{(m)}$ to solutions to these equations on $X$. We establish these results for the case that $X$ is a finitely ramified set, \cite{IRT12, T08}, endowed with a local resistance form. Possible generalizations are commented on in Section \ref{SS:generalizations}.

\subsection{Setup and basic assumptions}\label{SS:setup}

We describe the setup we consider and the assumptions under which the results of this section are formulated. They are standing assumptions for all results in this section and will not be repeated in the particular statements.

We recall the notion of finitely ramified cell structures as introduced in \cite[Definition 2.1]{T08}. 

\begin{definition}\label{D:finitelyramified}
A \emph{finitely ramified set} $X$ is a compact metric space which has a \emph{cell structure} $\{X_\alpha\}_{\alpha\in\mathcal{A}}$ and a \emph{boundary (vertex) structure} $\{V_\alpha\}_{\alpha\in\mathcal{A}}$ such that the following hold:
\begin{enumerate}
\item[(i)] $\mathcal{A}$ is a countable index set;
\item[(ii)] each $X_\alpha$ is a distinct compact connected subset of $X$;
\item[(iii)] each $V_\alpha$ is a finite subset of $X_\alpha$;
\item[(iv)] if $X_\alpha=\bigcup_{j=1}^k X_{\alpha_j}$ then $V_\alpha\subset \bigcup_{j=1}^k X_{\alpha_j}$;
\item[(v)] there is a filtration $\{\mathcal{A}_n\}_n$ such that 
\begin{enumerate}
\item[(v.a)] each $\mathcal{A}_n$ is a finite subset of $\mathcal{A}$, $\mathcal{A}_0=\{0\}$, and $X_0=X$;
\item[(v.b)] $\mathcal{A}_n\cap \mathcal{A}_m=\emptyset$ if $n\neq m$;
\item[(v.c)] for any $\alpha\in\mathcal{A}_n$ there are $\alpha_1,...,\alpha_k\in\mathcal{A}_{n+1}$ such that $X_\alpha=\bigcup_{j=1}^k X_{\alpha_j}$;
\end{enumerate}
\item[(vi)] $X_{\alpha'}\cap X_\alpha = V_{\alpha'}\cap V_\alpha$ for any two distinct $\alpha, \alpha'\in\mathcal{A}_n$;
\item[(vii)] for any strictly decreasing infinite sequence $X_{\alpha_1} \supsetneq X_{\alpha_2} \supsetneq ...$ there exists $x\in X$ such that $\bigcap_{n\geq 1} X_{\alpha_n}=\{x\}$.
\end{enumerate}
Under these conditions the triple $(X, \{X_\alpha\}_{\alpha\in\mathcal{A}}, \{V_\alpha\}_{\alpha\in\mathcal{A}})$ is called a \emph{finitely ramified cell structure}.
\end{definition}

We write $V_n=\bigcup_{\alpha\in\mathcal{A}_n} V_\alpha$ and $V_\ast =\bigcup_{n\geq 0} V_n$, note that $V_n\subset V_{n+1}$ for all $n$. Suppose that $(\mathcal{E}, \widetilde{\mathcal{F}})$ is a resistance form on $V_\ast$. It can be written in the form (\ref{E:limitform}) with $\widetilde{\mathcal{F}}$ in place of $\mathcal{F}$, where the forms $\mathcal{E}_{V_m}$ are the restrictions of $\mathcal{E}$ to $V_m$ as in (\ref{E:EV}) and (\ref{E:approxbydiscreteforms}). Any function in $\widetilde{\mathcal{F}}$ is continuous in $(\Omega,R)$, where $\Omega$ denotes the $R$-completion of $V_\ast$, and therefore has a unique extension to a continuous function on $\Omega$. Writing $\mathcal{F}$ for the space of all such extensions, we obtain a resistance form $(\mathcal{E},\mathcal{F})$ on $\Omega$. To avoid topological difficulties in this paper, we make the following assumption.

\begin{assumption}\label{A:basic}\mbox{} 
We have $\Omega=X$ and the resistance metric $R$ is compatible with the original topology.
\end{assumption}

In view of \cite[Section 4]{HamblyNyberg03}, \cite[Section 7]{Meyers} and the well-established theory in \cite[Section 3.3]{Ki01} one could rephrase this by saying we consider a regular harmonic structure. As a consequence, $(X,R)$ is a compact and connected metric space and $(\mathcal{E},\mathcal{F})$ is a regular resistance form on $X$, local in the sense that if $f\in \mathcal{F}$ is constant on an open neighborhood of the support of $g\in \mathcal{F}$, then $\mathcal{E}(f,g)=0$, see for instance \cite[Theorem 3 and its proof]{T08}.

Given $m\geq 0$ and a function $v\in \ell(V_m)$ there exists a unique function $h_m(v)\in\mathcal{F}$ such that $h_m(v)|_{V_m}=v$ in $\ell(V_m)$ and 
\[\mathcal{E}(h_m(v))=\mathcal{E}_{V_m}(v)=\min\left\lbrace \mathcal{E}(u):\ u\in\mathcal{F}, u|_{V_m}=v\right\rbrace,\]
see \cite[Proposition 2.15]{Ki03}. This function $h_m(v)$ is called the \emph{harmonic extension} of $v$, and as usual we say that a function $u\in\mathcal{F}$ is \emph{$m$-harmonic} if $u=h_m(u|_{V_m})$. We write $H_m(X)$ to denote the space of $m$-harmonic functions on $X$ and write $H_mu:=h_m(u|_{V_m})$, $u\in\mathcal{F}$, for the projection from $\mathcal{F}$ onto $H_m(X)$. It is well known and can be seen as in \cite[Theorem 1.4.4]{Str06} that
\begin{equation}\label{E:approxbyPH}
\lim_m \mathcal{E}(u-H_mu)=0,\quad u\in\mathcal{F},
\end{equation}
 and using (\ref{E:resistanceest}) it follows that also $\lim_m\left\| u-H_m u\right\|_{\sup}=0$, where $\left\|\cdot\right\|_{\sup}$ denotes the supremum norm. Consequently the space  
 \[H_\ast(X)=\bigcup_{m\geq 0} H_m(X)\] 
is dense in $\mathcal{F}$ w.r.t. the seminorm $\mathcal{E}^{1/2}$ and in $C(X)$ w.r.t. the supremum norm. We write $H_m(X)/\sim$ for the space of $m$-harmonic functions on $X$ modulo constants. For each $m$ the space $H_m(X)/\sim$ is a finite dimensional, hence closed subspace of $(\mathcal{F}/\sim,\mathcal{E})$, and since $H_m\mathbf{1}=\mathbf{1}$, the operator 
$H_m$ is easily seen to induce an orthogonal projection in $(\mathcal{F}/\sim,\mathcal{E})$ onto $H_m(X)/\sim$, which we denote by the same symbol. Clearly $H_\ast(X)/\sim$ is dense in $(\mathcal{F}/\sim,\mathcal{E})$.

We now state the main assumptions under which we implement the approximation scheme. They are formulated in a way that simultaneously covers approximations schemes by discrete graphs and by metric graphs as discussed in Sections \ref{SS:discretegraphs} and \ref{SS:metricgraphs}, respectively.

Let $\diam_R(A)$ denote the diameter of a set $A$ in $(X,R)$. The following assumption requires $\mathcal{E}$ to be compatible with the cell structure in the following 'uniform' metric sense.
\begin{assumption}\label{A:gotozero}
We have $\lim_{n\to\infty}\max_{\alpha\in \mathcal{A}_n} \diam_R(X_\alpha)=0$.
\end{assumption}

We now assume that $(X^{(m)})_m$ is a sequence of subsets $X^{(m)}\subset X$ such that for each $m\geq 0$ we have $X^{(m)}\subset X^{(m+1)}$  and $X^{(m)}=\bigcup_{\alpha\in\mathcal{A}_m}X_\alpha^{(m)}$ where for any $\alpha\in\mathcal{A}_m$ the set $X_\alpha^{(m)}$ satisfies 
\[V_\alpha\subset X_\alpha^{(m)}\subset X_\alpha.\] 
For any $m\geq 0$ let now $(\mathcal{E}^{(m)},\mathcal{F}^{(m)})$ be a resistance form on $X^{(m)}$ so that $(X^{(m)}, R^{(m)})$ is topologically embedded in $(X,R)$. We also assume that the spaces $(X^{(m)}, R^{(m)})$ are compact, this implies that the resistance forms 
$(\mathcal{E}^{(m)},\mathcal{F}^{(m)})$ are regular. By $\nu_f^{(m)}$ we denote the energy measure of a function $f\in\mathcal{F}^{(m)}$, defined as in (\ref{E:energymeasure}) with $(\mathcal{E}^{(m)},\mathcal{F}^{(m)})$ in place of $(\mathcal{E},\mathcal{F})$. The energy measures $\nu_f^{(m)}$ may be interpreted as Borel measures on $X$.

\begin{remark}
For spaces, forms, operators, coefficients and measures indexed by $m$ and connected to $X$ and the form $(\mathcal{E},\mathcal{F})$ we will use a subscript index $m$, similar objects corresponding to the spaces $X^{(m)}$ and the forms $(\mathcal{E}^{(m)},\mathcal{F}^{(m)})$ will be indexed by a superscript $(m)$, unless stated otherwise. For functions we will generally use a subscript index.
\end{remark}

We make some further assumptions. The first expresses a connection between the resistance forms in terms of $m$-harmonic functions. 
\begin{assumption}\label{A:convergence}\mbox{}
\begin{enumerate}
\item[(i)] For each $m$ the pointwise restriction $u\mapsto u|_{X^{(m)}}$ defines a linear map from $H_m(X)$ into $\mathcal{F}^{(m)}$ which is injective and satisfies
\begin{equation}\label{E:basic}
\mathcal{E}^{(m)}(u|_{X^{(m)}})= \mathcal{E}(u),\quad  u\in H_m(X). 
\end{equation}
\item[(ii)] We have
\begin{equation}\label{E:weakconvenergy}
\nu_u=\lim_m \nu^{(m)}_{H_m(u)|_{X^{(m)}}}, \quad u\in\mathcal{F},
\end{equation}
in the sense of weak convergence of measures on $X$.
\end{enumerate}
\end{assumption}

As a trivial consequence of (\ref{E:weakconvenergy}) we have  
\begin{equation}\label{E:formaslimit}
\mathcal{E}(u)=\lim_{m\to\infty} \mathcal{E}^{(m)}(H_m(u)|_{X^{(m)}}), \quad u\in\mathcal{F}.
\end{equation}

\begin{remark}
For approximations by discrete graphs (\ref{E:weakconvenergy}) follows from (\ref{E:formaslimit}) and (\ref{E:energymeasureapprox}). For metric graph approximations (\ref{E:weakconvenergy}) is verified in Subsection \ref{SS:metricgraphs} below, the use of products in (\ref{E:energymeasure}) hinders a direct conclusion of (\ref{E:weakconvenergy}) from (\ref{E:formaslimit}). 
\end{remark}

Now let
\[H_m(X^{(m)}):=\left\lbrace u|_{X^{(m)}}: u \in H_m(X)\right\rbrace\] 
denote the image of $H_m(X)$ under the pointwise restriction $u\mapsto u|_{X^{(m)}}$, which by (\ref{E:basic}) induces an isometry from $(H_m(X)/\sim,\mathcal{E})$ onto the Hilbert space $(H_m(X^{(m)})/\sim,\mathcal{E}^{(m)})$. The space  $H_m(X^{(m)})$ is a closed linear subspace of $\mathcal{F}^{(m)}$ and the space $H_m(X^{(m)})/\sim$ is a closed linear subspace of $\mathcal{F}^{(m)}/\sim$. Let $H_m^{(m)}$ denote the projection from $\mathcal{F}^{(m)}$ onto $H_m(X^{(m)})$. It satisfies $H_m^{(m)}\mathbf{1}=\mathbf{1}$ and induces an orthogonal projection from $(\mathcal{F}^{(m)}/\sim,\mathcal{E}^{(m)})$ onto $H_m(X^{(m)})/\sim$ so that in particular, 
\begin{equation}\label{E:projectmetric}
\mathcal{E}^{(m)}(H_{m}^{(m)} v)\leq \mathcal{E}^{(m)}(v), \quad v\in \mathcal{F}^{(m)}.
\end{equation}
Let $id_{\mathcal{F}^{(m)}}$ denote the identity operator in $\mathcal{F}^{(m)}$. We need an assumption on the decay of the operators $id_{\mathcal{F}^{(m)}}-H_{m}^{(m)}$ as $m$ goes to infinity. By $\left\|\cdot\right\|_{\sup, X^{(m)}}$ we denote the supremum norm on $X^{(m)}$.

\begin{assumption}\label{A:decay}\mbox{}
\begin{enumerate}
\item[(i)] For any sequence $(u_m)_m$ with $u_m\in\mathcal{F}^{(m)}$ such that $\sup_m \mathcal{E}^{(m)}(u_m)<+\infty$ we have
\begin{equation}\label{E:bumpdecay}
\lim_m \big\|u_m-H_{m}^{(m)}u_m\big\|_{\sup,X^{(m)}}=0.
\end{equation}
\item[(ii)] For $u,w\in H_n(X)$ we have 
\begin{equation}\label{E:bumpdecayproducts}
\lim_m \mathcal{E}^{(m)}(u|_{X^{(m)}}w|_{X^{(m)}}-H_m^{(m)}(u|_{X^{(m)}}w|_{X^{(m)}}))=0.
\end{equation}
\end{enumerate}
\end{assumption}

\begin{remark}\label{R:trivial}
For discrete graph approximations as in Subsection \ref{SS:discretegraphs} we have $H_m^{(m)}v=v$, $v\in\mathcal{F}^{(m)}$, so that Assumption \ref{A:decay} is trivial.
\end{remark}

Now let $\mu$ and $\mu^{(m)}$ be a finite Borel measures on $X$ and $X^{(m)}$, respectively, which assign positive mass to each nonempty open subset of the respective space. Then by \cite[Theorem 9.4]{Ki12} and \cite[Theorem 3]{T08} the forms $(\mathcal{E},\mathcal{F})$ and $(\mathcal{E}^{(m)},\mathcal{F}^{(m)})$ are regular Dirichlet forms on $L^2(X,\mu)$ and $L^2(X^{(m)},\mu^{(m)})$, and the Dirichlet form $(\mathcal{E},\mathcal{F})$ is strongly local in the sense of \cite{FOT94}. 

We make an assumption on the connection between the spaces $L^2(X,\mu)$ and $L^2(X^{(m)},\mu^{(m)})$ and its consistency with the projections and pointwise restrictions. By $\ext_m:H_m(X^{(m)})\to  H_m(X)$ we denote the inverse of the bijection $u\mapsto u|_{X^{(m)}}$ from $H_m(X)$ onto $H_m(X^{(m)})$.
 
\begin{assumption}\label{A:connection}\mbox{}
\begin{enumerate}
\item[(i)] The measures $\mu$ and $\mu^{(m)}$ admit a uniform lower bound in the following sense: There is a non-increasing function $V:\mathbb{N}\to (0,+\infty)$ such that for any $m$ we have $\mu(X_\alpha)\geq V(m)$, $\alpha\in\mathcal{A}_m$, and moreover, $\mu^{(m)}(X_\alpha^{(m)})\geq V(m)$, $\alpha\in\mathcal{A}_m$.
\item[(ii)] There are linear operators $\Phi_m:L^2(X,\mu) \to L^2(X^{(m)},\mu^{(m)})$ such that 
\begin{equation}\label{E:uninormbound}
\sup_m \left\|\Phi_m\right\|_{L^2(X,\mu) \to L^2(X^{(m)},\mu^{(m)})}<+\infty,
\end{equation}
\begin{equation}\label{E:KSconvHilbert}
\lim_{m\to\infty}\big\|\Phi_m u\big\|_{L^2(X^{(m)},\mu^{(m)})}=\left\|u\right\|_{L^2(X,\mu)},\quad u\in L^2(X,\mu),
\end{equation}
and for any $n$ and $u\in H_n(X)$ we have 
\begin{equation}\label{E:adjoint}
\lim_m \left\|\Phi_m^\ast\Phi_m u-u\right\|_{L^2(X,\mu)}=0,
\end{equation}
where for any $m$ the symbol $\Phi_m^\ast$ denotes the adjoint of $\Phi_m$.
\item[(iii)] For any sequence $(u_m)_m\subset \mathcal{F}$ with $\sup_m \mathcal{E}(u_m)<+\infty$ we have 
\begin{equation}\label{E:connection}
\lim_{m\to\infty} \big\|\Phi_m u_m-u_m|_{X^{(m)}}\big\|_{L^2(X^{(m)},\mu^{(m)})}=0.
\end{equation}
\item[(iv)] For any sequence $(u_m)_m$ with $u_m\in \mathcal{F}^{(m)}$ such that $\sup_m \mathcal{E}^{(m)}_1(u_m)<+\infty$
we have 
\begin{equation}\label{E:L2consist}
\sup_m \left\|\ext_m H_{m}^{(m)} u_m \right\|_{L^2(X,\mu)}<+\infty.
\end{equation}
\end{enumerate}
\end{assumption}

Let $\mathcal{H}$ and $\mathcal{H}^{(m)}$ denote the spaces of generalized $L^2$-vector fields associated with $(\mathcal{E},\mathcal{F})$ and $(\mathcal{E}^{(m)},\mathcal{F}^{(m)})$, respectively. The corresponding gradient operators we denote by $\partial$ and $\partial^{(m)}$. If $a$, $b$, $\hat{b}$ and $c$ satisfy the hypotheses of Proposition \ref{P:Qisnice} (i) then 
\[\mathcal{Q}(f,g):= \left\langle a\,\partial f,\partial g\right\rangle_\mathcal{H}-\left\langle g\cdot b,\partial f\right\rangle_{\mathcal{H}}-\big\langle f\cdot \hat{b}, \partial g\big\rangle_{\mathcal{H}}-\left\langle cf,g\right\rangle_{L^2(X,\mu)}, \quad f,g\in\mathcal{F},\]
defines a sectorial closed form $(\mathcal{Q},\mathcal{F})$ on $L^2(X,\mu)$. If $a$ and $c$ are suitable continuous functions on $X$ and $b$, $\hat{b}$, $b^{(m)}$ and $\hat{b}^{(m)}$ are vector fields of a suitable form, then we can define sectorial closed forms  $(\mathcal{Q}^{(m)},\mathcal{F}^{(m)})$ on the spaces $L^2(X^{(m)},\mu^{(m)})$, respectively, by 
\begin{multline}\label{E:KSQm}
\mathcal{Q}^{(m)}(f,g):=
\left\langle a|_{X^{(m)}}\cdot \partial f, \partial g\right\rangle_{\mathcal{H}^{(m)}}-\left\langle g\cdot b^{(m)},\partial f\right\rangle_{\mathcal{H}^{(m)}}\\-\big\langle f\cdot \hat{b}^{(m)}, \partial g\big\rangle_{\mathcal{H}^{(m)}}-\left\langle c|_{X^{(m)}}f,g\right\rangle_{L^2(X^{(m)},\mu^{(m)})}, \quad f,g\in \mathcal{F}^{(m)}.
\end{multline}

In Subsection \ref{SS:KSaccu} below we observe that under simple boundedness assumptions the solutions of (\ref{E:ellipticeq}) and (\ref{E:paraboliceq}) (for fixed $t>0$) associated with the forms $\mathcal{Q}^{(m)}$ on the spaces $X^{(m)}$ accumulate in a suitable sense, see Proposition \ref{P:KSaccu}. In Theorem \ref{T:KS_spectral} in Subsection \ref{SS:KS_spectral} we then conclude that they actually converge to the solutions to the respective equation associated with the form $\mathcal{Q}$, as announced in the introduction. In the preparatory Subsections \ref{SS:consequences} and \ref{SS:compatible} we record some consequences of the assumptions and discuss possible choices for $b$, $\hat{b}$, $b^{(m)}$ and $\hat{b}^{(m)}$.

\subsection{Some consequences of the assumptions}\label{SS:consequences}

We begin with some well-known conclusions.
\begin{lemma}\mbox{}\label{L:metricisgood}
\begin{enumerate}
\item[(i)] For any $p,q\in V_m$ we have $R^{(m)}(p,q)=R(p,q)$. In particular, 
$\diam_R(V_\alpha)=\diam_{R^{(m)}}(V_\alpha)$ for any $m\geq n$ and $\alpha\in\mathcal{A}_n$.
\item[(ii)] We have $\diam_R(X_\alpha)=\diam_R(V_\alpha)$ for any $n$ and $\alpha\in\mathcal{A}_n$, and 
$\diam_{R^{(m)}}(X_\alpha^{(n)})=\diam_{R^{(m)}}(V_\alpha)$ if $m\geq n$.
\end{enumerate}
\end{lemma}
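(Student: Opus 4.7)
The plan is to handle (i) and (ii) separately.

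For (i), the strategy is to reduce $R^{(m)}(p,q)=R(p,q)$ on $V_m\times V_m$ to the equality of the two trace forms $(\mathcal{E}_{V_m},\ell(V_m))$ and $(\mathcal{E}^{(m)}_{V_m},\ell(V_m))$, using the standard fact that the resistance metric of a resistance form equals the resistance metric of any finite trace containing the two points in question. The upper bound $\mathcal{E}^{(m)}_{V_m}(v)\leq\mathcal{E}_{V_m}(v)$ I would obtain by using $h_m(v)|_{X^{(m)}}\in H_m(X^{(m)})\subset\mathcal{F}^{(m)}$ as a candidate in the infimum defining $\mathcal{E}^{(m)}_{V_m}(v)$, noting that it agrees with $v$ on $V_m\subset X^{(m)}$ and has the same energy $\mathcal{E}_{V_m}(v)$ by (\ref{E:basic}). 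For the reverse bound, given $u\in\mathcal{F}^{(m)}$ with $u|_{V_m}=v$, I would apply $H_m^{(m)}$, which does not increase energy by (\ref{E:projectmetric}) and, since $H_m(X^{(m)})$-functions are uniquely determined by their $V_m$-values modulo constants, preserves $u|_{V_m}$ up to an additive constant; transporting via $\ext_m$ to an element of $H_m(X)$ with the same energy by (\ref{E:basic}), and using that $\mathcal{E}_{V_m}$ is invariant under adding constants, this yields $\mathcal{E}_{V_m}(v)\leq \mathcal{E}^{(m)}(u)$. The ``in particular'' statement is then immediate, since $V_\alpha\subset V_n\subset V_m$ whenever $\alpha\in\mathcal{A}_n$ and $m\geq n$.

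For (ii), the inclusion $V_\alpha\subset X_\alpha$ gives $\diam_R(V_\alpha)\leq\diam_R(X_\alpha)$ at once. For the reverse direction, given $x,y\in X_\alpha$, use the nested-cell property (vii) of Definition \ref{D:finitelyramified} together with Assumption \ref{A:gotozero} to extract sequences $p_k,q_k\in V_*\cap X_\alpha$ lying in nested subcells containing $x$ and $y$, respectively, with $R(x,p_k), R(y,q_k)\to 0$; by the triangle inequality for $R^{1/2}$ this gives $R(x,y)=\lim_k R(p_k,q_k)$. It then suffices to show $R(p,q)\leq\diam_R(V_\alpha)$ for all $p,q\in V_m\cap X_\alpha$ with $m\geq n$. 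For this I would use the strong locality of $(\mathcal{E},\mathcal{F})$, which produces the cell decomposition $\mathcal{E}_{V_m}(v)=\sum_{\beta\in\mathcal{A}_m}\mathcal{E}^{X_\beta}_{V_\beta}(v|_{V_\beta})$, and combine it with a maximum-principle argument for $V_\alpha$-harmonic extensions on the cell $X_\alpha$, which controls the oscillation of optimal test functions over $X_\alpha$ by their oscillation over $V_\alpha$. The second identity $\diam_{R^{(m)}}(X_\alpha^{(n)})=\diam_{R^{(m)}}(V_\alpha)$ for $m\geq n$ then follows by repeating the same argument for $(\mathcal{E}^{(m)},\mathcal{F}^{(m)})$ on $X^{(m)}$, which inherits a finitely ramified cell structure with cells $\{X_\alpha^{(n)}\}_{\alpha\in\mathcal{A}_n}$ and boundary vertices $\{V_\alpha\}$.

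The hard part will be the final vertex-level reduction in (ii), namely the bound $R(p,q)\leq\diam_R(V_\alpha)$ for vertices $p,q$ lying strictly inside the cell $X_\alpha$ at deeper levels. This is the step that genuinely exploits the strong locality of $\mathcal{E}$ and the role of $V_\alpha$ as a separating (cut) set in the finitely ramified structure, rather than the essentially routine trace-form manipulations of part (i).
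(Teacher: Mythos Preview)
Your proposal is correct and follows essentially the same route as the paper: for (i) both arguments use (\ref{E:basic}) and (\ref{E:projectmetric}) to match the minimizers for $R(p,q)^{-1}$ and $R^{(m)}(p,q)^{-1}$ over $H_m(X)$ and $H_m(X^{(m)})$ respectively (your detour through the equality $\mathcal{E}_{V_m}=\mathcal{E}^{(m)}_{V_m}$ of trace forms is just an equivalent repackaging), and for (ii) both rely on the maximum principle for harmonic functions on the cell $X_\alpha$. The paper phrases (ii) as a short contradiction rather than via your explicit density-plus-vertex-reduction, but the mechanism is the same.
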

\begin{proof}
To see (i) note that for any $p,q\in V_m$ we have, by a standard conclusion and using (\ref{E:basic}) and (\ref{E:projectmetric}),
\begin{align}
R(p,q)^{-1}&=\min\left\lbrace \mathcal{E}(u): u\in H_m(X): u(p)=0, u(q)=1\right\rbrace\notag\\
&=\min\left\lbrace \mathcal{E}^{(m)}(u|_{X^{(m)}}): u\in H_m(X), u(p)=0, u(q)=1\right\rbrace\notag\\
&=R^{(m)}(p,q)^{-1}.\notag
\end{align}
If the first statement (ii) were not true we could find  $p\in X_\alpha\cap V_\ast$ and $q\in (X_\alpha\cap V_\ast)\setminus V_\alpha$ such that $R(p,q)>R(p,q')$ for all $q'\in V_\alpha$. This would imply that there exists some $u\in H_n(X)$ with $u(p)=0$ and $\mathcal{E}(u)=1$ such that $u(q)^2<u(q')^2$ for all $q'\in V_\alpha$. However, this contradicts the maximum principle for harmonic functions on the cell $X_\alpha$. The second statement follows similarly.
\end{proof}

Also the following is due to Assumption \ref{A:convergence}.
\begin{corollary}\label{C:jumpsvanish}
For any $f_1, f_2\in H_n(X)$ and $g_1, g_2\in C(X)$ we have 
\[\lim_m\left\langle g_1|_{X^{(m)}}\cdot \partial^{(m)}(f_1|_{X^{(m)}}), g_2|_{X^{(m)}}\cdot \partial^{(m)}(f_2|_{X^{(m)}})\right\rangle_{\mathcal{H}^{(m)}}=\left\langle g_1\cdot \partial f_1, g_2\cdot\partial f_2\right\rangle_{\mathcal{H}}.\]
\end{corollary}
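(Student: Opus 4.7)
The plan is to exploit the observation that $\langle g_1\cdot\partial f_1,g_2\cdot\partial f_2\rangle=\langle g_1g_2\cdot\partial f_1,\partial f_2\rangle$, which makes both sides of the claimed identity symmetric bilinear in $(g_1,g_2)$ and in $(f_1,f_2)$. Polarizing twice, I would reduce to the diagonal case $f_1=f_2=f\in H_n(X)$ and $g_1=g_2=g\in C(X)$, i.e.\ to
\[\lim_m\bigl\|g|_{X^{(m)}}\cdot\partial^{(m)}(f|_{X^{(m)}})\bigr\|_{\mathcal{H}^{(m)}}^2=\bigl\|g\cdot\partial f\bigr\|_{\mathcal{H}}^2.\]
Since $(\mathcal{E},\mathcal{F})$ is local (as noted after Assumption~\ref{A:basic}), its jump part $J$ in the Beurling--Deny decomposition~(\ref{E:BD}) vanishes, so Proposition~\ref{P:indep} identifies the right-hand side with $\int_X g^2\,d\nu_f$.

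On the approximating side I would apply Proposition~\ref{P:indep} on $(X^{(m)},R^{(m)})$, use the elementary identity $\overline{g}(x,y)^2=\tfrac{1}{2}g(x)^2+\tfrac{1}{2}g(y)^2-\tfrac{1}{4}(g(x)-g(y))^2$, the symmetry of $J^{(m)}$ in $(x,y)$, and formula~(\ref{E:BDmeas}) to collapse the local and jump parts back into the full energy measure, producing the decomposition
\[\bigl\|g|_{X^{(m)}}\cdot\partial^{(m)}(f|_{X^{(m)}})\bigr\|_{\mathcal{H}^{(m)}}^2=\int_{X^{(m)}} g^2\,d\nu^{(m)}_{f|_{X^{(m)}}}-\tfrac{1}{4}\!\int\!\!\int(g(x)-g(y))^2(d_uf(x,y))^2\,J^{(m)}(dx\,dy).\]
Since $f\in H_n(X)\subset H_m(X)$ for every $m\geq n$, we have $H_m f=f$ and hence $H_m(f)|_{X^{(m)}}=f|_{X^{(m)}}$, so Assumption~\ref{A:convergence}(ii) supplies the weak convergence $\nu^{(m)}_{f|_{X^{(m)}}}\to\nu_f$ on $X$; testing against the continuous function $g^2$ gives $\int_{X^{(m)}} g^2\,d\nu^{(m)}_{f|_{X^{(m)}}}\to\int_X g^2\,d\nu_f$, which is exactly the desired limit for the first summand.

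To handle the second summand I plan to invoke the structural feature, valid in both approximation schemes of Section~\ref{S:examples}, that $J^{(m)}$ is supported on pairs $(x,y)$ lying in a common cell $X_\alpha$ with $\alpha\in\mathcal{A}_m$: trivially in the metric graph case of Subsection~\ref{SS:metricgraphs}, where $J^{(m)}\equiv 0$, and in the discrete graph case of Subsection~\ref{SS:discretegraphs} from the cellwise additivity of the trace forms $\mathcal{E}_{V_m}$ together with Lemma~\ref{L:metricisgood}. Setting $\eta_m:=\max_{\alpha\in\mathcal{A}_m}\diam_R(X_\alpha)$, Assumption~\ref{A:gotozero} gives $\eta_m\to 0$, and the uniform continuity of $g$ on the compact space $(X,R)$ then yields $(g(x)-g(y))^2\leq\omega_g(\eta_m)^2$ on the support of $J^{(m)}$, where $\omega_g$ denotes the modulus of continuity of $g$. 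Using (\ref{E:basic}) to bound $\int\!\!\int(d_uf(x,y))^2\,J^{(m)}(dx\,dy)\leq\mathcal{E}^{(m)}(f|_{X^{(m)}})=\mathcal{E}(f)$ uniformly in $m$, the second summand is dominated by $\tfrac{1}{4}\omega_g(\eta_m)^2\,\mathcal{E}(f)$ and tends to zero. The hardest point of the argument is precisely this last step: the abstract hypotheses of Subsection~\ref{SS:setup} do not by themselves localize $J^{(m)}$ on cells, so the structural input coming from the specific geometry of the approximation is essential to suppress the potentially long-range contributions of $J^{(m)}$; everything else is routine bookkeeping with Proposition~\ref{P:indep} and the weak-convergence Assumption~\ref{A:convergence}(ii).
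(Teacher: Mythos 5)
Your reduction to the diagonal case, the treatment of the limit form via its locality (so that $J=0$ and the right-hand side is $\int_X g^2\,d\nu_f$), the identity $\overline{g}(x,y)^2=\tfrac12 g(x)^2+\tfrac12 g(y)^2-\tfrac14(d_ug(x,y))^2$, and the convergence of $\int g^2\,d\nu^{(m)}_{f|_{X^{(m)}}}$ via Assumption \ref{A:convergence}(ii) all match the paper's computation, and the remainder $\tfrac14\int\!\!\int(d_ug)^2(d_uf)^2\,dJ^{(m)}$ is exactly the term the paper also has to kill. The gap is where you yourself point to it. Corollary \ref{C:jumpsvanish} is stated, and is used (through Corollary \ref{C:compatibleb} inside the proof of Theorem \ref{T:KS_spectral}), under the abstract hypotheses of Subsection \ref{SS:setup} alone; nothing in Assumptions \ref{A:basic}--\ref{A:connection} forces the jump kernels $J^{(m)}$ to charge only pairs of points lying in a common $m$-cell. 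Importing that support property from the two concrete schemes of Section \ref{S:examples} proves the statement only for those examples, not the corollary as stated, so as written the argument for the jump term does not go through.

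The paper closes this hole with no support assumption on $J^{(m)}$: for harmonic $f$ and $g$ with disjoint supports, $-2\int\!\!\int f(x)g(y)\,J^{(m)}(dxdy)=\mathcal{E}^{(m)}(f|_{X^{(m)}},g|_{X^{(m)}})\to\mathcal{E}(f,g)=0$ by (\ref{E:basic}), the locality of the local parts $\mathcal{E}^{(m),c}$ and the locality of $\mathcal{E}$; a uniform approximation of continuous functions with disjoint supports by harmonic ones (as in the proof of \cite[Theorem 3]{T08}) then yields $\lim_m J^{(m)}=0$ vaguely on $X\times X\setminus\diag$. The near-diagonal contribution of $(d_ug)^2(d_uf)^2$ is controlled by the uniform bound $\int\!\!\int(d_uf)^2\,dJ^{(m)}\le\mathcal{E}^{(m)}(f|_{X^{(m)}})=\mathcal{E}(f)$ together with the uniform continuity of $g$ --- the same mechanism you invoke, but applied on an $\varepsilon$-neighbourhood of the diagonal rather than on cells, so that no geometric localization of $J^{(m)}$ is needed. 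If you replace your structural claim about $\supp J^{(m)}$ by this vague-convergence argument, the rest of your proof is correct and essentially coincides with the paper's.
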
 
\begin{proof}
If all $\mathcal{E}^{(m)}$'s are local then by Proposition \ref{P:indep} we have 
\[\left\|g|_{X^{(m)}}\cdot \partial^{(m)}(f|_{X^{(m)}})\right\|_{\mathcal{H}^{(m)}}^2=\int_{X^{(m)}}(g|_{X^{(m)}})^2d\nu^{(m),c}_{f|_{X^{(m)}}}=\int_X g^2d\nu_{f|_X^{(m)}}^{(m)}\]
for all $f\in H_n(X)$ and $g\in C(X)$, where $\nu_f^{(m),c}$ denotes the local part of the energy measure of $f$ with respect to $(\mathcal{E}^{(m)},\mathcal{F}^{(m)})$, and by (\ref{E:weakconvenergy}) this converges to 
\[\int_X g^2d\nu_f=\left\|g\cdot \partial f\right\|_{\mathcal{H}}^2.\]
Suppose now that the $\mathcal{E}^{(m)}$'s have nontrivial jump measures $J^{(m)}$. If $f\in H_n(X)$ and $g\in H_{n'}(X)$ have disjoint supports then by Proposition \ref{P:indep}, the locality of $\mathcal{E}^{(m),c}$, (\ref{E:basic}) and the locality of $\mathcal{E}$ we have 
\begin{align}\label{E:jumpsvanish}
-2\lim_m \int_X\int_X f(x)g(y)\:J^{(m)}(dxdy)&=\lim_m\int_{X^{(m)}}\int_{X^{(m)}}(f(x)-f(y))(g(x)-g(y))J^{(m)}(dxdy)\notag\\
&=\lim_m \mathcal{E}^{(m)}(f,g)\notag\\
&=\mathcal{E}(f,g)\notag\\
&=0.
\end{align}
Given $f,g\in C(X)$ with disjoint supports, we can, by the proof of \cite[Theorem 3]{T08}, find sequences of functions $(f_j)_j$ and $(g_j)_j$ from $H_\ast(X)$ approximating $f$ and $g$ uniformly and disjoint compact sets $K(f)\subset X$ and $K(g)\subset X$ such that all $f_j$ and $g_j$ are supported in $K(f)$ and $K(g)$, respectively. Therefore (\ref{E:jumpsvanish}) and the arguments used in the proof of \cite[Theorem 3.2.1]{FOT94} imply that $\lim_m J^{(m)}=0$ vaguely on $X\times X\setminus \diag$. For functions  $f\in H_n(X)$ and $g\in C(X)$ we therefore have 
\[\lim_m \int_{X^{(m)}}\int_{X^{(m)}} (d_ug(x,y))^2(d_uf(x,y))^2J^{(m)}(dxdy)=0, \]
as can be seen using the arguments in the proof of \cite[Lemma 3.1]{HinzMeinert19+}. On the other hand we have 
\[\left\|g\cdot \partial f\right\|_{\mathcal{H}}^2=\lim_m\left\lbrace \int_{X^{(m)}}g^2d\nu_f^{(m),c}+\frac12\int_{X^{(m)}}\int_{X^{(m)}}(g^2(x)+g^2(y))(d_uf(x,y))^2 J^{(m)}(dxdy)\right\rbrace\]
for such $f$ and $g$ by (\ref{E:weakconvenergy}) and (\ref{E:BDmeas}).
Combining and taking into account Proposition \ref{P:indep} we can conclude that 
\begin{align}
\left\|g\cdot \partial f\right\|_{\mathcal{H}}^2&=\lim_m\left\lbrace \int_{X^{(m)}}g^2d\nu_f^{(m),c}+\frac12\int_{X^{(m)}}\int_{X^{(m)}}(\overline{g}(x,y))^2(d_uf(x,y))^2 J^{(m)}(dxdy)\right\rbrace\notag\\
& = \lim_m \left\|g|_{X^{(m)}}\cdot \partial^{(m)}(f|_{X^{(m)}})\right\|_{\mathcal{H}^{(m)}}^2,\notag
\end{align}
from which the stated result follows by polarization.
\end{proof}

Another consequence, in particular of Assumption \ref{A:connection}, is the convergence of the $L^2$-spaces and the energy domains in the sense of Definition \ref{D:KS}.

\begin{corollary}\label{C:energyimpliesL2}     \mbox{}
\begin{enumerate}
\item[(i)] We have 
\begin{equation}\label{E:KSL2}
\lim_{m\to\infty} L^2(X^{(m)},\mu^{(m)})=L^2(X,\mu)
\end{equation}
in the KS-sense with identification operators $\Phi_m$ as above.
\item[(ii)] We have 
\begin{equation}\label{E:KSenergy}
\lim_{m\to\infty} \mathcal{F}^{(m)}=\mathcal{F}
\end{equation}
in the KS-sense with identification operators $u\mapsto (H_mu)|_{X^{(m)}}$ mapping from $\mathcal{F}$ into $\mathcal{F}^{(m)}$ respectively.
\item[(iii)] If $f\in\mathcal{F}$ and $(f_m)_m$ is a sequence of functions $f_m\in\mathcal{F}^{(m)}$ such that $\lim_m f_m=f$ KS-strongly w.r.t. (\ref{E:KSenergy}) then we also have  $\lim_m f_m=f$ KS-strongly w.r.t. (\ref{E:KSL2}).
\end{enumerate}
\end{corollary}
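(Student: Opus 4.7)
My plan is to verify each of the three parts directly from Assumptions \ref{A:convergence} and \ref{A:connection} together with (\ref{E:approxbyPH}).

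For (i), the statement is essentially immediate. Assumption \ref{A:connection}(ii) defines $\Phi_m$ on all of $L^2(X,\mu)$ (so in particular on a dense subspace of itself), and (\ref{E:KSconvHilbert}) is exactly the norm-convergence $\|\Phi_m u\|_{L^2(X^{(m)},\mu^{(m)})} \to \|u\|_{L^2(X,\mu)}$ required in the KS definition.

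For (ii), I will show $\mathcal{E}^{(m)}_1((H_m u)|_{X^{(m)}}) \to \mathcal{E}_1(u)$ for every $u\in\mathcal{F}$. The energy piece is immediate from (\ref{E:basic}) and the fact that $H_m$ is an orthogonal projection in $(\mathcal{F}/\!\sim,\mathcal{E})$:
\[\mathcal{E}^{(m)}((H_m u)|_{X^{(m)}}) = \mathcal{E}(H_m u) \xrightarrow[m\to\infty]{} \mathcal{E}(u)\]
by (\ref{E:approxbyPH}). For the $L^2$-piece, since $\sup_m \mathcal{E}(H_m u)\le \mathcal{E}(u)<\infty$, I can apply (\ref{E:connection}) to $u_m:=H_m u$ to get $\|\Phi_m H_m u - (H_m u)|_{X^{(m)}}\|_{L^2(X^{(m)},\mu^{(m)})}\to 0$. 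The uniform bound (\ref{E:uninormbound}) gives
\[\bigl|\,\|\Phi_m H_m u\|_{L^2(X^{(m)})} - \|\Phi_m u\|_{L^2(X^{(m)})}\,\bigr|\le C\|H_m u - u\|_{L^2(X,\mu)},\]
and the right-hand side tends to $0$ because (\ref{E:approxbyPH}) and the resistance estimate (\ref{E:resistanceest}) force uniform convergence $H_m u\to u$, hence $L^2$-convergence by finiteness of $\mu$. Combining with (\ref{E:KSconvHilbert}) yields $\|\Phi_m H_m u\|_{L^2(X^{(m)})} \to \|u\|_{L^2(X,\mu)}$ and therefore $\|(H_m u)|_{X^{(m)}}\|_{L^2(X^{(m)})}\to \|u\|_{L^2(X,\mu)}$. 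Adding the two contributions gives the required convergence in $\mathcal{E}^{(m)}_1$-norm.

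For (iii), unwinding the definition of KS-strong convergence along (\ref{E:KSenergy}) produces a sequence $(v_k)_k\subset \mathcal{F}$ with $v_k\to f$ in $\mathcal{F}$ and
\[\lim_k \limsup_m \bigl\|(H_m v_k)|_{X^{(m)}} - f_m\bigr\|_{\mathcal{E}^{(m)}_1} = 0.\]
I will verify that the \emph{same} $v_k$'s serve as approximants witnessing KS-strong convergence in the $L^2$-sense. Since $\|\cdot\|_{L^2(X,\mu)}^2\le \mathcal{E}_1(\cdot)$, we have $v_k\to f$ in $L^2(X,\mu)$. For fixed $k$ I will estimate
\[\|\Phi_m v_k - f_m\|_{L^2(X^{(m)},\mu^{(m)})}\le \|\Phi_m(v_k-H_m v_k)\|_{L^2} + \|\Phi_m H_m v_k - (H_m v_k)|_{X^{(m)}}\|_{L^2} + \|(H_m v_k)|_{X^{(m)}} - f_m\|_{\mathcal{E}^{(m)}_1}.\]
The first summand is $\le C\|v_k - H_m v_k\|_{L^2(X,\mu)}$ by (\ref{E:uninormbound}) and tends to $0$ as $m\to\infty$ by (\ref{E:approxbyPH}); the second tends to $0$ as $m\to\infty$ by (\ref{E:connection}) applied to the energy-bounded sequence $H_m v_k$; the third, after taking $\limsup_m$ and then $\lim_k$, vanishes by the standing hypothesis. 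This delivers exactly the required approximation property.

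The argument is mostly bookkeeping; the only real subtlety is that (\ref{E:connection}) applies to $\mathcal{F}$-valued sequences with uniformly bounded energy, so in (iii) one cannot directly compare $\Phi_m v_k$ with $v_k|_{X^{(m)}}$ and $f_m$, but must interpolate through $\Phi_m H_m v_k$ and $(H_m v_k)|_{X^{(m)}}$. This interpolation is what allows (\ref{E:uninormbound}), (\ref{E:connection}), and the $m$-asymptotics of $H_m$ to combine cleanly and deliver the double-limit conclusion.
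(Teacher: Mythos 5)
Your argument is correct and follows essentially the same route as the paper: (i) is read off from (\ref{E:KSconvHilbert}); (ii) combines (\ref{E:basic})/(\ref{E:approxbyPH}) for the energy part with (\ref{E:uninormbound}), (\ref{E:connection}), (\ref{E:KSconvHilbert}) and the uniform convergence $H_mu\to u$ forced by (\ref{E:resistanceest}) for the $L^2$ part; and (iii) reuses the energy approximants and interpolates through $\Phi_m H_m v_k$ and $(H_m v_k)|_{X^{(m)}}$, exactly the triangle-inequality chain the paper uses. The only cosmetic difference is that the paper cites (\ref{E:formaslimit}) directly where you rederive it from (\ref{E:basic}) and (\ref{E:approxbyPH}).
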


\begin{proof}
Statement (i) is immediate from (\ref{E:KSconvHilbert}). To see statement (ii) let $u\in\mathcal{F}$. If $x_0\in V_0$ is fixed, we have $H_mu(x_0)=u(x_0)$ for any $m$ and therefore, by (\ref{E:resistanceest}) and (\ref{E:approxbyPH}),
\[\lim_m \left\|u-H_mu\right\|_{L^2(X,\mu)}^2\leq \mu(X)\lim_m\left\|u-H_mu\right\|_{\sup}^2\leq \mu(X)\diam_R(X)\lim_m\mathcal{E}(u-H_mu)=0.\]
Using (\ref{E:uninormbound}), we obtain $\lim_m\left\|\Phi_m H_mu-\Phi_m u\right\|_{L^2(X,\mu)}=0$, and combining with (\ref{E:connection}) and (\ref{E:KSconvHilbert}), 
\begin{align}
\lim_m &\left\|(H_mu)|_{X^{(m)}}\right\|_{L^2(X^{(m)},\mu^{(m)})}\notag\\
&=\lim_m \left\|(H_mu)|_{X^{(m)}}-\Phi_m H_mu \right\|_{L^2(X^{(m)},\mu^{(m)})}+\lim_m \left\|\Phi_m H_mu \right\|_{L^2(X^{(m)},\mu^{(m)})}\notag\\
&=\lim_m \left\|\Phi_m u \right\|_{L^2(X^{(m)},\mu^{(m)})}\notag\\
&=\left\|u\right\|_{L^2(X,\mu)}.\notag
\end{align}
Together with (\ref{E:formaslimit}) this shows that $\lim_m \mathcal{E}^{(m)}_1((H_m u)|_{X^{(m)}})=\mathcal{E}_1(u)$ for all $u\in\mathcal{F}$. To see (iii) note that according to the hypothesis, there exist $\varphi_n\in \mathcal{F}$ such that 
\[\lim_n \mathcal{E}_1(\varphi_n-f)=0\quad \text{ and }\quad \lim_n\varlimsup_m \mathcal{E}_1^{(m)}((H_m \varphi_n)|_{X^{(m)}}-f_m)=0.\] 
This implies $\lim_n \left\|\varphi_n -f\right\|_{L^2(X,\mu)}=0$ and $\lim_n\varlimsup_m\left\|(H_m \varphi_n)|_{X^{(m)}}-f_m\right\|_{L^2(X^{(m)},\mu^{(m)})}=0$.
Conditions (\ref{E:KSconvHilbert}) and (\ref{E:connection}), applied to the constant function $\mathbf{1}$, yield $\lim_m \mu^{(m)}(X^{(m)})=\mu(X)$, and in particular,
\begin{equation}\label{E:unitotalmass}
\sup_m \mu(X^{(m)})<+\infty.
\end{equation}
We may therefore use (\ref{E:connection}) to conclude $\lim_m\left\|(H_m \varphi_n)|_{X^{(m)}}-\Phi_m H_m \varphi_n \right\|_{L^2(X^{(m)},\mu^{(m)})}=0$ for any $n$, so that 
\begin{equation}\label{E:onepiece}
\lim_n\varlimsup_m\left\|\Phi_m H_m \varphi_n -f_m\right\|_{L^2(X^{(m)},\mu^{(m)})}=0.
\end{equation}
Let $x_0\in V_0$. Then, since $H_m\varphi_n(x_0)=\varphi(x_0)$ for all $m$ and $n$, the resistance estimate (\ref{E:resistanceest}) implies 
\[\lim_m\left\| H_m\varphi_n-\varphi_n\right\|_{L^2(X,\mu)}=0\] 
for all $n$. Together with (\ref{E:uninormbound}) it follows that
\begin{align}
\lim_n\varlimsup_m & \left\|\Phi_m H_m\varphi_n - \Phi_m \varphi_n \right\|_{L^2(X^{(m)},\mu^{(m)})}\notag\\
&\leq \left(\sup_m \left\|\Phi_m\right\|_{L^2(X,\mu) \to L^2(X^{(m)},\mu^{(m)})}\right) \lim_n\varlimsup_m \left\| H_m\varphi_n-\varphi_n\right\|_{L^2(X,\mu)}\notag\\
&=0,\notag
\end{align}
and combining with (\ref{E:onepiece}) we obtain $\lim_n\varlimsup_m\left\|\Phi_m \varphi_n-f_m\right\|_{L^2(X^{(m)},\mu^{(m)})}=0$.
\end{proof}

In the sequel we will say 'KS-weakly' resp. 'KS-strongly' if we refer to the convergence (\ref{E:KSL2}) and say 'KS-weakly w.r.t. (\ref{E:KSenergy})' resp.'KS-strongly w.r.t. (\ref{E:KSenergy})' if we refer to the convergence (\ref{E:KSenergy}). We finally record a property of KS-weak convergence that will be useful later on.

\begin{lemma}\label{L:productsKSweakly}
If $\lim_m f_m=f$ KS-weakly and $w\in\mathcal{F}$ then there is a sequence $(m_k)_k$ with $m_k\uparrow +\infty$ such that $\lim_k w|_{X^{(m_k)}}f_{m_k}=wf$ KS-weakly.
\end{lemma}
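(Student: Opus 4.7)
The plan is to approximate the multiplier $w$ by its $n$-harmonic projections and use duality. Set $w_n:=H_n w$, so that $w_n\to w$ in $(\mathcal{F},\mathcal{E})$ by (\ref{E:approxbyPH}) and $\|w-w_n\|_{\sup}\to 0$ by the resistance estimate (\ref{E:resistanceest}) together with the fact that $w_n(x_0)=w(x_0)$ for any fixed $x_0\in V_0$. In particular $\|w\|_{\sup}<+\infty$, so $\sup_m \|w|_{X^{(m)}}f_m\|_{L^2(X^{(m)},\mu^{(m)})}\leq \|w\|_{\sup}\sup_m \|f_m\|_{L^2(X^{(m)},\mu^{(m)})}<+\infty$, which makes $w|_{X^{(m)}}f_m$ a reasonable candidate for KS-weak convergence.

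The heart of the argument is to show that for each fixed $n$, multiplication by $w_n|_{X^{(m)}}$ preserves KS-strong convergence, because then dualizing against $f_m$ yields the KS-weak convergence $w_n|_{X^{(m)}}f_m\to w_n f$. So suppose $g_m\to g$ KS-strongly; by replacing the approximants from the definition of KS-strong convergence by close elements of $\mathcal{F}$ (which is dense in $L^2(X,\mu)$) and using (\ref{E:uninormbound}), we may take $\varphi_k\in\mathcal{F}$ with $\varphi_k\to g$ in $L^2(X,\mu)$ and $\lim_k\varlimsup_m\|\Phi_m\varphi_k-g_m\|_{L^2(X^{(m)},\mu^{(m)})}=0$. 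By (\ref{E:pointwisemult}) we have $w_n\varphi_k\in\mathcal{F}$, hence (\ref{E:connection}) applied to the $m$-constant sequence $w_n\varphi_k$ gives $\|\Phi_m(w_n\varphi_k)-(w_n\varphi_k)|_{X^{(m)}}\|_{L^2(X^{(m)},\mu^{(m)})}\to 0$. Since $(w_n\varphi_k)|_{X^{(m)}}=w_n|_{X^{(m)}}\varphi_k|_{X^{(m)}}$ and (\ref{E:connection}) also yields $\|\Phi_m\varphi_k-\varphi_k|_{X^{(m)}}\|_{L^2(X^{(m)},\mu^{(m)})}\to 0$, boundedness of $w_n$ gives
\begin{equation*}
\|\Phi_m(w_n\varphi_k)-w_n|_{X^{(m)}}\Phi_m\varphi_k\|_{L^2(X^{(m)},\mu^{(m)})}\to 0.
\end{equation*}
Combining with $\|w_n|_{X^{(m)}}(\Phi_m\varphi_k-g_m)\|\leq \|w_n\|_{\sup}\|\Phi_m\varphi_k-g_m\|$ and $w_n\varphi_k\to w_n g$ in $L^2(X,\mu)$ shows $w_n|_{X^{(m)}}g_m\to w_n g$ KS-strongly. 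Then by KS-weak convergence of $(f_m)_m$,
\begin{equation*}
\langle w_n|_{X^{(m)}}f_m,g_m\rangle_{L^2(X^{(m)},\mu^{(m)})}=\langle f_m,w_n|_{X^{(m)}}g_m\rangle_{L^2(X^{(m)},\mu^{(m)})}\to \langle f,w_n g\rangle_{L^2(X,\mu)}=\langle w_n f,g\rangle_{L^2(X,\mu)},
\end{equation*}
so $w_n|_{X^{(m)}}f_m\to w_n f$ KS-weakly for every $n$.

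Finally, the error $(w-w_n)|_{X^{(m)}}f_m$ has $L^2$-norm at most $\|w-w_n\|_{\sup}\sup_m\|f_m\|_{L^2(X^{(m)},\mu^{(m)})}$, hence is uniformly small in $m$ as $n\to\infty$, while $w_n f\to wf$ in $L^2(X,\mu)$. A standard diagonal extraction then produces a subsequence $(m_k)_k$ with $m_k\uparrow +\infty$ along which $w|_{X^{(m_k)}}f_{m_k}\to wf$ KS-weakly. The main technical obstacle is the preservation of KS-strong convergence under multiplication by $w_n|_{X^{(m)}}$; this hinges on being able to approximately commute the identification operators $\Phi_m$ with multiplication by regular functions, which is made possible by the product stability (\ref{E:pointwisemult}) of $\mathcal{F}$ together with the compatibility (\ref{E:connection}) of $\Phi_m$ with pointwise restriction on energy-bounded sequences.
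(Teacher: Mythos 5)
Your argument is correct, but it follows a genuinely different route from the paper's. The paper's proof is a weak-compactness-plus-identification argument: since $\sup_m \|w|_{X^{(m)}}f_m\|_{L^2(X^{(m)},\mu^{(m)})}<+\infty$, some subsequence converges KS-weakly to a limit $\widetilde{w}$, and that limit is identified as $wf$ by pairing only with the particular KS-strongly convergent test sequences $v|_{X^{(m_k)}}\to v$ for $v\in\mathcal{F}$, using $(vw)|_{X^{(m)}}=v|_{X^{(m)}}w|_{X^{(m)}}$, the KS-weak convergence of $(f_m)_m$, and the density of $\mathcal{F}$ in $L^2(X,\mu)$; the subsequence in the statement is precisely the one produced by the compactness step. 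You instead verify Definition \ref{D:KS} (ii) directly against arbitrary KS-strongly convergent test sequences, which forces you to prove the extra fact that multiplication by $w_n|_{X^{(m)}}$ preserves KS-strong convergence; your mechanism for this -- approximately commuting $\Phi_m$ with multiplication via (\ref{E:connection}), (\ref{E:pointwisemult}) and (\ref{E:uninormbound}) -- is sound. What your approach buys is a strictly stronger conclusion: it yields KS-weak convergence of the \emph{full} sequence $w|_{X^{(m)}}f_m\to wf$, so no subsequence is actually needed. In fact your own multiplication-stability argument works verbatim with $w$ in place of $w_n$ (on the compact space $X$ every $w\in\mathcal{F}$ is already bounded and continuous, and $\mathcal{F}$ is an algebra by (\ref{E:pointwisemult})), so the harmonic-projection layer $w_n=H_nw$ and the closing diagonal extraction are superfluous; removing them would make your proof both shorter and sharper than the paper's.
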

\begin{proof} 
For any $w\in\mathcal{F}$ we have $\lim_m w|_{X^{(m)}}=w$  KS-strongly by (\ref{E:connection}). Fix $w\in\mathcal{F}$. Clearly
\[\sup_m \left\|w|_{X^{(m)}}f_m\right\|_{L^2(X^{(m)},\mu^{(m)})}<+\infty\] 
by the boundedness of $w$, hence $\lim_k w|_{X^{(m_k)}}f_{m_k}=\widetilde{w}$ KS-weakly for some $\widetilde{w}\in L^2(X,\mu)$ and some sequence $(m_k)_k$. For any $v\in\mathcal{F}$ we have $vw\in\mathcal{F}$ and trivially $(vw)|_{X^{(m)}}=v|_{X^{(m)}} w|_{X^{(m)}}$, hence 
\begin{multline}
\left\langle \widetilde{w}, v\right\rangle_{L^2(X,\mu)}=\lim_k \left\langle w|_{X^{(m_k)}}f_{m_k}, v|_{X^{(m_k)}}\right\rangle_{L^2(X^{(m_k)},\mu^{(m_k)})}=\lim_k \left\langle f_{m_k}, (vw)|_{X^{(m_k)}}\right\rangle_{L^2(X^{(m_k)},\mu^{(m_k)})}\notag\\
=\left\langle f, wv\right\rangle_{L^2(X,\mu)}=\left\langle wf, v\right\rangle_{L^2(X,\mu)},
\end{multline}
what by the density of $\mathcal{F}$ in $L^2(X,\mu)$ implies $\widetilde{w}=wf$ and therefore the lemma.
\end{proof}

\subsection{Boundedness and convergence of vector fields}\label{SS:compatible}

We provide a version of Proposition \ref{P:Luke} for finitely ramified sets. Recall the notation from Assumption \ref{A:connection}.
\begin{proposition}\label{P:LukeKS}\mbox{}
\begin{enumerate}
\item[(i)] Given $b\in\mathcal{H}$ and $M>0$ let $n_0$ be such that 
\[\max_{\alpha\in\mathcal{A}_{n_0}}\diam_R(X_\alpha)\leq \frac{1}{2M\left\|b\right\|_{\mathcal{H}}^2}.\]
Then for all $g\in\mathcal{F}$ we have 
\[\left\|g\cdot b\right\|_{\mathcal{H}}^2\leq \frac{1}{M}\mathcal{E}(g)+\frac{2}{V(n_0)}\left\|b\right\|_{\mathcal{H}}^2\left\|g\right\|_{L^2(X,\mu)}^2.\]
\item[(ii)] Suppose $b^{(m)}\in\mathcal{H}^{(m)}$, $M>0$ and $n_0\leq m$ are such that 
\[\max_{\alpha\in \mathcal{A}_{n_0}}\diam_{R^{(m)}}(X_\alpha^{(n_0)})\leq \frac{1}{2M\big\|b^{(m)}\big\|_{\mathcal{H}^{(m)}}^2}.\]
Then for all $g\in\mathcal{F}^{(m)}$ we have
\[\big\|g\cdot b^{(m)}\big\|_{\mathcal{H}^{(m)}}^2\leq \frac{1}{M}\mathcal{E}^{(m)}(g)+\frac{2}{V(n_0)} \big\|b^{(m)}\big\|_{\mathcal{H}^{(m)}}^2\big\|g\big\|_{L^2(X^{(m)},\mu^{(m)})}^2.\]
\end{enumerate}
\end{proposition}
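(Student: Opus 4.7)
The plan is to mimic the proof of Proposition \ref{P:Luke}, but to replace the cover by $R$-balls with the finite cell cover $\{X_\alpha\}_{\alpha\in\mathcal{A}_{n_0}}$ (for part (i)) respectively the analogous covering by pieces of $X^{(m)}$ indexed by $\mathcal{A}_{n_0}$ (for part (ii)). The role of the volume lower bound $V(r)$ from Proposition \ref{P:Luke} will be played by the cellular lower bound $V(n_0)$ from Assumption \ref{A:connection}(i).

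For part (i), first I would fix $\alpha\in\mathcal{A}_{n_0}$ and $x\in X_\alpha$ and use the resistance estimate \eqref{E:resistanceest} together with $R(x,y)\leq \diam_R(X_\alpha)$ for $y\in X_\alpha$ to obtain
\[
|g(x)-(g)_{X_\alpha}|^2\leq \diam_R(X_\alpha)\,\mathcal{E}(g),
\]
where $(g)_{X_\alpha}=\mu(X_\alpha)^{-1}\int_{X_\alpha}g\,d\mu$. Next, expanding $|g(x)|^2\leq 2|g(x)-(g)_{X_\alpha}|^2+2((g)_{X_\alpha})^2$ and applying Jensen, this gives
\[
|g(x)|^2\leq 2\diam_R(X_\alpha)\,\mathcal{E}(g)+\frac{2}{\mu(X_\alpha)}\|g\|_{L^2(X,\mu)}^2.
\]
Since $\mathcal{A}_{n_0}$ is finite and the $X_\alpha$ cover $X$, I would enumerate $\mathcal{A}_{n_0}=\{\alpha_1,\dots,\alpha_N\}$ and set $C_j=X_{\alpha_j}\setminus\bigcup_{i<j}X_{\alpha_i}$ to obtain a finite Borel partition with $C_j\subset X_{\alpha_j}$. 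Using Assumption \ref{A:connection}(i) to bound $\mu(X_{\alpha_j})\geq V(n_0)$ and the diameter hypothesis to bound $\diam_R(X_{\alpha_j})\leq (2M\|b\|_\mathcal{H}^2)^{-1}$, the pointwise estimate becomes
\[
\|g\|_{\sup}^2\leq \frac{1}{M\|b\|_\mathcal{H}^2}\mathcal{E}(g)+\frac{2}{V(n_0)}\|g\|_{L^2(X,\mu)}^2.
\]
Multiplying by $\|b\|_\mathcal{H}^2$ and invoking \eqref{E:boundedaction} delivers the claim.

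For part (ii), the argument is structurally the same on $(X^{(m)},R^{(m)},\mu^{(m)})$, carried out with the collection of $\mathcal{A}_{n_0}$-indexed pieces of $X^{(m)}$, namely $\widetilde X_\alpha^{(m)}:=\bigcup_{\beta\in\mathcal{A}_m,\,X_\beta\subset X_\alpha}X_\beta^{(m)}$, which covers $X^{(m)}$. The crucial diameter control $\diam_{R^{(m)}}(\widetilde X_\alpha^{(m)})\leq \diam_{R^{(m)}}(X_\alpha^{(n_0)})$ follows from Lemma \ref{L:metricisgood}, since all points of $\widetilde X_\alpha^{(m)}$ can be joined through the common boundary $V_\alpha\subset X_\alpha^{(n_0)}$ and the $R^{(m)}$-diameter of $X_\alpha^{(n_0)}$ coincides with $\diam_{R^{(m)}}(V_\alpha)$. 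The lower volume bound $\mu^{(m)}(\widetilde X_\alpha^{(m)})\geq V(n_0)$ follows by summing $\mu^{(m)}(X_\beta^{(m)})\geq V(m)$ over the descendants $\beta\in\mathcal{A}_m$ of $\alpha$, together with the additivity (up to boundary sets of measure zero) and the compatibility of $V$ across scales built into Assumption \ref{A:connection}(i). With these two ingredients in place, the same four-step argument as in (i), applied with $(\mathcal{E}^{(m)},\mathcal{F}^{(m)},\mu^{(m)})$, yields the desired inequality.

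The step I expect to be most delicate is the verification of the diameter and volume bounds for the aggregated pieces $\widetilde X_\alpha^{(m)}$ in part (ii): part (i) is a transparent transcription of the ball-cover argument from Proposition \ref{P:Luke}, but in (ii) one has to justify that the hypothesis formulated through the fixed base-cell diameters $\diam_{R^{(m)}}(X_\alpha^{(n_0)})$ really controls the variable pieces $\widetilde X_\alpha^{(m)}$ at level $m$, and that the measure $\mu^{(m)}$ indeed satisfies $\mu^{(m)}(\widetilde X_\alpha^{(m)})\geq V(n_0)$ rather than only the weaker bound $V(m)$. Both claims are consequences of the cell structure together with Assumption \ref{A:connection}(i), but they require a careful bookkeeping of descendants which is the only nontrivial point in the proof.
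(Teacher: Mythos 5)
Your part (i) reproduces the paper's proof exactly: compare $g(x)$ with the cell average $(g)_{X_\alpha}$ via \eqref{E:resistanceest}, use the diameter hypothesis and the bound $\mu(X_\alpha)\geq V(n_0)$, turn the cells of $\mathcal{A}_{n_0}$ into a finite Borel partition of $X$, and finish with \eqref{E:boundedaction}. For part (ii) the paper offers nothing beyond ``Statement (ii) is similar'', and your elaboration --- covering $X^{(m)}$ by the aggregates $\widetilde{X}_\alpha^{(m)}=X^{(m)}\cap X_\alpha$, $\alpha\in\mathcal{A}_{n_0}$, and controlling their $R^{(m)}$-diameters by $\diam_{R^{(m)}}(V_\alpha)=\diam_{R^{(m)}}(X_\alpha^{(n_0)})$ through the maximum-principle argument underlying Lemma \ref{L:metricisgood} --- is the right reading of that remark, and you correctly single out the two points that actually need checking.

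The one step that does not hold up as written is your derivation of $\mu^{(m)}(\widetilde{X}_\alpha^{(m)})\geq V(n_0)$. Summing $\mu^{(m)}(X_\beta^{(m)})\geq V(m)$ over the descendants $\beta\in\mathcal{A}_m$ of $\alpha$ gives only $k_\alpha V(m)$, where $k_\alpha$ is the number of descendants; since $V$ is non-increasing we have $V(m)\leq V(n_0)$, and $k_\alpha$ is not controlled, so this does not yield $V(n_0)$. There is also no ``compatibility of $V$ across scales'' in Assumption \ref{A:connection}(i) beyond monotonicity, which points in the wrong direction. The bound you need is genuinely an extra input: it is true in both of the paper's examples because there one shows directly that $\mu^{(m)}(X^{(m)}\cap X_\alpha)\geq \mu(X_\alpha)\geq V(n_0)$ for $\alpha\in\mathcal{A}_{n_0}$ (the partition-of-unity functions $\psi_{p,m}$, resp.\ $\psi_{e,m}$, attached to vertices or edges lying in $X_\alpha$ sum to at least $\mathbf{1}$ on $X_\alpha$), and this is evidently how the assumption is meant to be used at coarser levels; but it is not a consequence of the level-$m$ statement via your summation. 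Apart from this one justification, the proposal is sound and follows the same route as Proposition \ref{P:Luke} and the paper's own proof.
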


\begin{proof}
We use the shortcut $(g)_{X_\alpha}=\frac{1}{\mu(X_\alpha)}\int_{X_\alpha} g\: d\mu$. For any $\alpha\in \mathcal{A}_{n_0}$ and $x\in X_\alpha$ have, by (\ref{E:resistanceest}), $|g(x)-(g)_{X_\alpha}|\leq \mathcal{E}(g)^{1/2}\diam_R(X_\alpha)^{1/2}$ and therefore
\[|g(x)|^2\leq 2\mathcal{E}(g)\diam_R(X_\alpha)+2(g^2)_{X_\alpha}\leq \frac{1}{M\left\|b\right\|_{\mathcal{H}}^2}\mathcal{E}(g)+\frac{2}{V(n_0)}\left\|g\right\|_{L^2(X,\mu)}^2.\]
Creating a finite partition of $X$ from the cells $X_\alpha$, $\alpha\in \mathcal{A}_{n_0}$, we see that the preceding estimate holds for all $x\in X$, and using (\ref{E:boundedaction}) we obtain (i). Statement (ii) is similar.
\end{proof}

Similarly as in Corollary \ref{C:uniformHardy}, uniform norm bounds on the vector fields allow to choose uniform constants in the Hardy condition (\ref{E:Hardy}).

\begin{corollary}\label{C:KSuniformHardy}
Suppose $b^{(m)}\in \mathcal{H}^{(m)}$ are such that $\sup_m \left\|b^{(m)}\right\|_{\mathcal{H}^{(m)}}<+\infty$. Then for any $M>0$ there exist $\gamma_M>0$ and $n_0$ such that for each $m\geq n_0$ the coefficient $b^{(m)}$ satisfies (\ref{E:Hardy}) with 
$\delta(b^{(m)})=\frac{1}{M}$  and $\gamma(b^{(m)})=\gamma_M$.
\end{corollary}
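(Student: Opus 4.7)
The plan is to deduce the uniform Hardy estimate directly from Proposition \ref{P:LukeKS}(ii), using Assumption \ref{A:gotozero} together with Lemma \ref{L:metricisgood} to control the diameters of the cells in the varying resistance metrics $R^{(m)}$ simultaneously.

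Set $B := \sup_m \big\|b^{(m)}\big\|_{\mathcal{H}^{(m)}}^2$, which is finite by hypothesis. Given $M > 0$, the aim is to find an index $n_0$ so that the smallness condition
\[\max_{\alpha\in\mathcal{A}_{n_0}}\diam_{R^{(m)}}(X_\alpha^{(n_0)})\leq \frac{1}{2M\big\|b^{(m)}\big\|_{\mathcal{H}^{(m)}}^2}\]
required in Proposition \ref{P:LukeKS}(ii) holds uniformly in $m\geq n_0$. By Lemma \ref{L:metricisgood} we have, for every $m\geq n_0$ and every $\alpha\in\mathcal{A}_{n_0}$, the identities $\diam_{R^{(m)}}(X_\alpha^{(n_0)})=\diam_{R^{(m)}}(V_\alpha)=\diam_R(V_\alpha)=\diam_R(X_\alpha)$, so the left-hand side above is independent of $m$. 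By Assumption \ref{A:gotozero} we may therefore choose $n_0$ large enough that
\[\max_{\alpha\in\mathcal{A}_{n_0}}\diam_R(X_\alpha)\leq \frac{1}{2MB},\]
and since $\big\|b^{(m)}\big\|_{\mathcal{H}^{(m)}}^2\leq B$ this ensures the smallness hypothesis for every $m\geq n_0$.

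Applying Proposition \ref{P:LukeKS}(ii) for each such $m$ and every $g\in\mathcal{F}^{(m)}$ then yields
\[\big\|g\cdot b^{(m)}\big\|_{\mathcal{H}^{(m)}}^2\leq \frac{1}{M}\mathcal{E}^{(m)}(g)+\frac{2}{V(n_0)}\big\|b^{(m)}\big\|_{\mathcal{H}^{(m)}}^2\big\|g\big\|_{L^2(X^{(m)},\mu^{(m)})}^2\leq \frac{1}{M}\mathcal{E}^{(m)}(g)+\gamma_M\big\|g\big\|_{L^2(X^{(m)},\mu^{(m)})}^2,\]
where $\gamma_M := 2B/V(n_0)$, which is independent of $m$. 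This is exactly the Hardy estimate (\ref{E:Hardy}) with $\delta(b^{(m)})=1/M$ and $\gamma(b^{(m)})=\gamma_M$, completing the argument. There is no genuine obstacle: the only point worth emphasising is that the uniformity in $m$ hinges on the $R^{(m)}$-diameters of the $n_0$-cells being eventually independent of $m$, which is precisely the content of Lemma \ref{L:metricisgood}, combined with the uniform volume lower bound from Assumption \ref{A:connection}(i).
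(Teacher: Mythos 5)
Your proof is correct and follows essentially the same route as the paper: both use Lemma \ref{L:metricisgood} and Assumption \ref{A:gotozero} to choose $n_0$ so that the cell diameters satisfy the smallness hypothesis of Proposition \ref{P:LukeKS}(ii) uniformly in $m$, and then read off the Hardy estimate with $\gamma_M$ of the form $2\sup_m\|b^{(m)}\|_{\mathcal{H}^{(m)}}^2/V(n_0)$. Your explicit chain of diameter identities is a slightly more detailed justification of the uniformity in $m$ than the paper spells out, but the argument is the same.
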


\begin{proof}
By Lemma \ref{L:metricisgood} and Assumption \ref{A:gotozero} we can find $n_0$ such that
\[\sup_{m\geq n_0}\max_{\alpha\in\mathcal{A}_{n_0}}\diam_{R^{(m)}}(X_\alpha^{(m)})\leq \frac{1}{4M\sup_m\big\|b^{(m)}\big\|_{\mathcal{H}^{(m)}}^2}.\]
Setting 
\begin{equation}\label{E:KSgammam}
\gamma_M:=\frac{2}{V(n_0)}\sup_{m\geq n_0} \big\|b^{(m)}\big\|_{\mathcal{H}^{(m)}}^2 
\end{equation}
we obtain the desired result by Proposition \ref{P:LukeKS}.
\end{proof}

To formulate an analog of Corollary \ref{C:bmtob} for varying spaces we need a certain compatibility of the vector fields involved. One rather easy way to ensure the latter is to focus on suitable elements of the module $\Omega^1_a(X)$ and their equivalence classes in $\mathcal{H}$ and $\mathcal{H}^{(m)}$ which then define vector fields $b$ on $X$ and $b^{(m)}$ on $X^{(m)}$ suitable to allow an approximation procedure. Given an element of $\Omega^1_a(X)$ of the special form $\sum_i g_i\cdot d_uf_i$ with $g_i\in C(X)$ and $f_i\in H_n(X)$, let $b$ defined as its $\mathcal{H}$-equivalence class $\big[\sum_i g_i\cdot d_uf_i\big]_\mathcal{H}$ as in Section \ref{S:resistanceforms}, that is,
\begin{equation}\label{E:equivclass}
b:=\sum_i g_i\cdot \partial f_i.
\end{equation}
By Assumption \ref{A:convergence} we have $f_i|_{X^{(m)}}\in\mathcal{F}^{(m)}$ for all $i$ and $m$, so that $\sum_i g_i|_{X^{(m)}}\cdot d_u(f_i|_{X^{(m)}})$ is an element of $\Omega^1_a(X^{(m)})$. We define $b^{(m)}$ to be its $\mathcal{H}^{(m)}$-equivalence class $\big[\sum_i g_i|_{X^{(m)}}\cdot d_u(f_i|_{X^{(m)}})\big]_{\mathcal{H}^{(m)}}$, that is,
\begin{equation}\label{E:equivclassm}
b^{(m)}:=\sum_i g_i|_{X^{(m)}}\cdot \partial^{(m)}(f_i|_{X^{(m)}}).
\end{equation}

The following convergence result may be seen as a partial generalization of (\ref{E:weakconvenergy}). It is immediate from  Corollary \ref{C:jumpsvanish} and bilinear extension. 

\begin{corollary}\label{C:compatibleb}
Suppose $b$ and $b^{(m)}$ are as in (\ref{E:equivclass}) and (\ref{E:equivclassm}) and $g\in C(X)$. Then we have 
\begin{equation}\label{E:lim}
\lim_m \left\|g|_{X^{(m)}}\cdot b^{(m)}\right\|_{\mathcal{H}^{(m)}}= \left\|g\cdot b\right\|_{\mathcal{H}}.
\end{equation}
\end{corollary}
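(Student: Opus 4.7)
Since both sides of the claimed identity are non-negative, it suffices to prove convergence of the squared norms. Using the bilinearity of $\langle\cdot,\cdot\rangle_{\mathcal{H}}$ and $\langle\cdot,\cdot\rangle_{\mathcal{H}^{(m)}}$, the linearity of the action of continuous functions in its function argument, and the fact that $b=\sum_i g_i\cdot\partial f_i$ is a \emph{finite} sum (and similarly for $b^{(m)}$), one expands
\[\|g\cdot b\|_{\mathcal{H}}^2 = \sum_{i,j}\langle g\cdot(g_i\cdot\partial f_i),\, g\cdot(g_j\cdot\partial f_j)\rangle_{\mathcal{H}},\]
together with the analogous expansion for $\|g|_{X^{(m)}}\cdot b^{(m)}\|_{\mathcal{H}^{(m)}}^2$. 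It therefore suffices to show that each individual inner product on $\mathcal{H}^{(m)}$ converges to the corresponding inner product on $\mathcal{H}$ as $m\to\infty$.

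Fix a pair $(i,j)$; by hypothesis both $f_i,f_j\in H_n(X)$ for a common $n$. The central step is to apply the proof method of Proposition \ref{P:indep} to these slightly generalized inner products: inspecting the defining formula \eqref{E:scalarprod} and splitting the sum over $V_{m'}^{(m)}$ into diagonal and off-diagonal contributions exactly as in that proof, one obtains the representation
\[\langle g|_{X^{(m)}}\cdot(g_i|_{X^{(m)}}\cdot\partial^{(m)}(f_i|_{X^{(m)}})),\, g|_{X^{(m)}}\cdot(g_j|_{X^{(m)}}\cdot\partial^{(m)}(f_j|_{X^{(m)}}))\rangle_{\mathcal{H}^{(m)}}\]
as a sum of a local piece testing the mutual local energy measure $\nu^{(m),c}_{f_i|_{X^{(m)}},f_j|_{X^{(m)}}}$ against the continuous function $g^2 g_i g_j$, plus a jump piece testing $J^{(m)}$ against $\bar g^2\bar g_i\bar g_j\, d_u f_i\, d_u f_j$. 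Locality of $\mathcal{E}$ on the finitely ramified target $X$ (Assumption \ref{A:basic}) reduces the corresponding expression for $\langle g\cdot(g_i\cdot\partial f_i),g\cdot(g_j\cdot\partial f_j)\rangle_{\mathcal{H}}$ to $\int_X g^2 g_i g_j\,d\nu^c_{f_i,f_j}$.

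It then remains to pass to the limit in the two pieces, and here Corollary \ref{C:jumpsvanish} does essentially all the work for us. Its proof already establishes the vague convergence $J^{(m)}\to 0$ on $X\times X\setminus\diag$ (so the jump piece vanishes in the limit) and, via \eqref{E:weakconvenergy} and polarization, the weak convergence $\nu^{(m),c}_{f_i|_{X^{(m)}},f_j|_{X^{(m)}}}\to \nu^c_{f_i,f_j}$ on $X$ (which, tested against the continuous function $g^2 g_i g_j$, gives convergence of the local piece). The only obstacle worth mentioning is not a real one: Corollary \ref{C:jumpsvanish} is phrased with a single continuous multiplier per factor, while here a second continuous factor $g$ appears in each entry. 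But the same weak/vague convergence machinery accommodates any bounded continuous test function without modification, so the bilinear expansion above closes the argument and, after taking square roots, yields \eqref{E:lim}.
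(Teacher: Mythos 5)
Your proof is correct and takes essentially the same route as the paper's, which simply states that the corollary is immediate from Corollary \ref{C:jumpsvanish} and bilinear extension. The extra point you raise about the iterated action of $g$ (that $\overline{g}\,\overline{g_i}$ is not a single averaged multiplier $\overline{h}$) is legitimate but, as you observe, harmless: the discrepancy is confined to the jump parts, which vanish in the limit, while the local pieces only see the diagonal values $g^2 g_i g_j$.
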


\begin{remark}\label{R:restriction}
One might argue that an analog of Corollary \ref{C:bmtob} in terms of a simple restriction of vector fields $b\in\mathcal{H}$ to $X^{(m)}$ would be more convincing than Corollary \ref{C:compatibleb}. However, as $\mathcal{H}$ and $\mathcal{H}^{(m)}$ are obtained by different factorizations, it is not obvious how to correctly define a restriction operation on all of $\mathcal{H}$.
Using the finitely ramified cell structure one can introduce restrictions $b|_{X^{(m)}}$ to $X^{(m)}$ of certain types of vector fields $b\in \mathcal{H}$ and obtain an counterpart of (\ref{E:lim}) with these restrictions $b|_{X^{(m)}}$ in place of the $b^{(m)}$'s. This auxiliary observation is discussed in Section \ref{S:restriction}, it is not needed for our main results.
\end{remark}

\subsection{Accumulation points}\label{SS:KSaccu}

Let $a\in C(X)$ satisfy (\ref{E:triviala}) with $0<\lambda<\Lambda$, suppose $M>0$ is large enough so that $\lambda_0:=\lambda/2-1/M>0$ and that $b^{(m)},\hat{b}^{(m)}\in\mathcal{H}^{(m)}$ satisfy
\begin{equation}\label{E:bmssup}
\sup_m \left\|b^{(m)}\right\|_{\mathcal{H}^{(m)}}^2<+\infty\quad \text{and}\quad  \sup_m \vert|\hat{b}^{(m)}\vert|_{\mathcal{H}^{(m)}}^2<+\infty.
\end{equation}
Let $\gamma_M$ be as in (\ref{E:KSgammam}) and $\hat{\gamma}_M$ similarly but with the $\hat{b}^{(m)}$ in place of $b^{(m)}$ and suppose that $c\in C(X)$ satisfies (\ref{E:conseqforfieldssup}). Then for each $m$ the form $(\mathcal{Q}^{(m)},\mathcal{F}^{(m)})$ as in (\ref{E:KSQm})
is a closed form on $L^2(X^{(m)},\mu^{(m)})$, and (\ref{E:posdef}) holds with $\delta(b^{(m)})=\delta(\hat{b}^{(m)})=1/M$ 
and with $\gamma_M$, $\hat{\gamma}_M$ in place of $\gamma(b)$, $\gamma(\hat{b})$ in (\ref{E:upperconst}). There is a constant $K>0$ such that for each $m$ the generator $(\mathcal{L}^{\mathcal{Q}^{(m)}},\mathcal{D}(\mathcal{L}^{\mathcal{Q}^{(m)}}))$ of $(\mathcal{Q}^{(m)},\mathcal{F}^{(m)})$ obeys the sector condition (\ref{E:sectorcond}) with sector constant $K$. As a consequence, we can observe the following uniform energy bounds on solutions to elliptic and parabolic equations similar to Proposition \ref{P:accusingle}.

\begin{proposition}\label{P:KSaccu}
Let $a$, $b^{(m)}$, $\hat{b}^{(m)}$ and $c$ be as above such that (\ref{E:bmssup}) and (\ref{E:conseqforfieldssup}) hold. 
\begin{enumerate}
\item[(i)] If $f\in L^2(X,\mu)$, and $u_m$ is the unique weak solution to (\ref{E:ellipticeq}) with $\mathcal{L}^{\mathcal{Q}^{(m)}}$ in place of $\mathcal{L}$ and $f_m = \Phi_m f$ in place of $f$ then we have $\sup_m \mathcal{Q}_1^{(m)}(u_m)<+\infty$. 
\item[(ii)] If $\mathring{u}\in L^2(X,\mu)$, and $u_m$ is the unique solution to (\ref{E:paraboliceq}) in $L^2(X^{(m)}, \mu^{(m)})$ with $\mathcal{L}^{\mathcal{Q}^{(m)}}$ in place of $\mathcal{L}$ and with initial condition $\mathring{u}_m=\Phi_m \mathring{u}$ then for any $t>0$ we have $\sup_m \mathcal{Q}_1^{(m)}(u_m(t))<+\infty$.
\end{enumerate}
\end{proposition}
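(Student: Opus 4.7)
The proposition is the varying-space analogue of Proposition \ref{P:accusingle}, so the plan is to mirror that argument, with two extra pieces of bookkeeping: a uniform Hardy condition that now lives on the varying spaces, and the transport of the data via the identification operators $\Phi_m$. First, I would apply Corollary \ref{C:KSuniformHardy} to the sequences $(b^{(m)})_m$ and $(\hat b^{(m)})_m$, using hypothesis \eqref{E:bmssup}. This produces an index $n_0$ (depending on $M$) and constants $\gamma_M, \hat\gamma_M$ as in \eqref{E:KSgammam} such that, for all $m\geq n_0$, the Hardy bound \eqref{E:Hardy} holds for $b^{(m)}$ and $\hat b^{(m)}$ with $\delta=1/M$ and the common $\gamma_M, \hat\gamma_M$. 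Together with the choice of $M$ forcing $\lambda_0 = \lambda/2 - 1/M > 0$ and hypothesis \eqref{E:conseqforfieldssup} on $c$, this places each $(\mathcal{Q}^{(m)},\mathcal{F}^{(m)})$, for $m\geq n_0$, into the setting of Proposition \ref{P:Qisnice}(i) and (ii), with constants $\lambda_0, c_0, \Lambda_\infty, c_\infty$ as in \eqref{E:posdef}--\eqref{E:upperconst} that are \emph{independent of $m$}, and with a uniform sector constant $K$ in the sector condition \eqref{E:sectorcond}.

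With this uniformity in hand, I would apply Corollary \ref{C:solutionelliptic} to the form $\mathcal{Q}^{(m)}$ on $L^2(X^{(m)},\mu^{(m)})$, yielding
\[
\mathcal{Q}_1^{(m)}(u_m)\leq \Bigl(\frac{2}{c_0}+\frac{4}{c_0^2}\Bigr)\bigl\|f_m\bigr\|_{L^2(X^{(m)},\mu^{(m)})}^2
\]
for (i), and similarly Corollary \ref{C:solutionparabolic} for (ii), giving
\[
\mathcal{Q}_1^{(m)}(u_m(t))\leq \Bigl(\frac{C_K}{t}+1\Bigr)\bigl\|\mathring u_m\bigr\|_{L^2(X^{(m)},\mu^{(m)})}^2,
\]
with $C_K$ depending only on the common sector constant $K$. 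Finally, by \eqref{E:uninormbound} in Assumption \ref{A:connection} the sequence of identification operators $(\Phi_m)_m$ is uniformly bounded, so $\sup_m\|f_m\|_{L^2(X^{(m)},\mu^{(m)})}<+\infty$ and $\sup_m\|\mathring u_m\|_{L^2(X^{(m)},\mu^{(m)})}<+\infty$, and both desired suprema are finite for $m\geq n_0$. The finitely many indices $m<n_0$ contribute only finite terms and are absorbed into the overall supremum.

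The only mild obstacle is the split between the uniform regime $m\geq n_0$ and the excluded small indices, but this is harmless since each $\mathcal{Q}_1^{(m)}(u_m)$ is individually finite by the form-theoretic existence result (Corollaries \ref{C:solutionelliptic}, \ref{C:solutionparabolic}) applied to the single form $\mathcal{Q}^{(m)}$; no new convergence input is needed for these. All the real work has already been done in Corollary \ref{C:KSuniformHardy}, which encodes uniform smallness of the first-order terms with respect to the principal part, and this is precisely what renders the energy bounds \eqref{E:energyboundelliptic} and \eqref{E:energyboundparabolic} uniform across the varying spaces.
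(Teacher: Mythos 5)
Your proof is correct and follows essentially the same route as the paper: uniform Hardy constants via Corollary \ref{C:KSuniformHardy}, hence $m$-independent constants $c_0$ and $K$ in \eqref{E:posdef} and \eqref{E:sectorcond}, then the energy bounds of Corollaries \ref{C:solutionelliptic} and \ref{C:solutionparabolic} combined with the uniform operator bound \eqref{E:uninormbound} on the $\Phi_m$. The only addition is your explicit handling of the finitely many indices $m<n_0$, which the paper absorbs into the setup preceding the proposition.
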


\begin{proof}
Since (\ref{E:conseqforfieldssup}) and (\ref{E:sectorcond}) hold with the same constants $c_0$ and $K$ for all $m$, Corollaries \ref{C:solutionelliptic} and \ref{C:solutionparabolic} together with \eqref{E:uninormbound} yield that  
$\sup_m \mathcal{Q}_1^{(m)}(u_m)\leq \left(\frac{2}{c_0}+\frac{4}{c_0^2}\right)\left\|f\right\|_{L^2(X,\mu)}$ and $\sup_m\mathcal{Q}_1^{(m)}(u_m(t))\leq \left(\frac{C_K}{t}+1\right)\left\|\mathring{u}\right\|_{L^2(X,\mu)}^2$
and the results follow.  
\end{proof}

\begin{remark}
Proposition \ref{P:KSaccu} needs only Assumption \ref{A:connection} (i) and (ii). Assumption \ref{A:convergence}, Assumption \ref{A:decay} and Assumption \ref{A:connection} (iii) and (iv) are not needed.
\end{remark}

\begin{remark}
The hypotheses of Proposition \ref{P:KSaccu} imply that $((\mathcal{Q}^{(m)},\mathcal{F}^{(m)}))_m$ is an equi-elliptic family in the sense of \cite[Definition 2.1]{MugnoloNittkaPost13}.
\end{remark}

By the compactness of $X$ we can find accumulation points in $C(X)$ for extensions to $X$ of linearizations of solutions. The next corollary may be seen as an analog of Corollary \ref{C:KigamiAA}. Recall the definitions of the projections $H_m^{(m)}$ and the extension operators $\ext_m$.

\begin{corollary}\label{C:KSaccu}
Let $a$, $b^{(m)}$, $\hat{b}^{(m)}$ and $c$ be as above such that (\ref{E:bmssup}) and (\ref{E:conseqforfieldssup}) hold. 
\begin{enumerate}
\item[(i)] If $f\in L^2(X,\mu)$, and $u_m$ is the unique weak solution to (\ref{E:ellipticeq}) with $\mathcal{L}^{\mathcal{Q}^{(m)}}$ in place of $\mathcal{L}$ and $f_m = \Phi_m f$ in place of $f$ then each subsequence $(u_{m_k})_k$ of $(u_m)_m$ has a further subsequence $(u_{m_{k_j}})_j$ such that $(\ext_{m_{k_j}}H_{m_{k_j}}^{(m_{k_j})}u_{m_{k_j}})_j$ converges to a limit $\widetilde{u}\in C(X)$ uniformly on $X$.
\item[(ii)] If $\mathring{u}\in L^2(X,\mu)$, and $u_m$ is the unique solution to (\ref{E:paraboliceq}) in $L^2(X^{(m)}, \mu^{(m)})$ with $\mathcal{L}^{\mathcal{Q}^{(m)}}$ in place of $\mathcal{L}$ and with initial condition $\mathring{u}_m=\Phi_m \mathring{u} $ then for any $t>0$ each subsequence $(u_{m_k}(t))_k$ of $(u_m(t))_m$ has a further subsequence $(u_{m_{k_j}}(t))_j$ such that $(\ext_{m_{k_j}}H_{m_{k_j}}^{(m_{k_j})}u_{m_{k_j}}(t))_j$ converges to a limit $\widetilde{u}_t\in C(X)$ uniformly on $X$.
\end{enumerate}
\end{corollary}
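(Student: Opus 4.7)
Cases (i) and (ii) reduce to a single argument once the sequences are fixed: in (i) set $v_m := u_m$, and in (ii), for fixed $t>0$, set $v_m := u_m(t)$. In both cases Proposition \ref{P:KSaccu} supplies the uniform bound $\sup_m \mathcal{Q}_1^{(m)}(v_m) < +\infty$. My goal is to produce from this a bounded sequence in the Hilbert space $(\mathcal{F}, \mathcal{E}_1)$ by harmonic projection and extension, and then to extract a uniformly convergent subsequence using the compact embedding $\mathcal{F} \subset C(X)$ of \cite[Lemma 9.7]{Ki12}.

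First, because Corollary \ref{C:KSuniformHardy} provides uniform Hardy constants for the sequences $(b^{(m)})_m$ and $(\hat{b}^{(m)})_m$ (for all $m$ past some index, and only finitely many remaining indices are harmless), the lower bound in (\ref{E:posdef}) holds for the forms $\mathcal{Q}^{(m)}$ with constants $\lambda_0, c_0$ independent of $m$. Hence the bound on $\mathcal{Q}_1^{(m)}(v_m)$ upgrades to
\[
\sup_m \mathcal{E}_1^{(m)}(v_m) < +\infty.
\]
Setting $w_m := \ext_m H_m^{(m)} v_m \in H_m(X) \subset \mathcal{F}$, the orthogonal-projection inequality (\ref{E:projectmetric}) combined with the isometry (\ref{E:basic}) gives
\[
\mathcal{E}(w_m) = \mathcal{E}^{(m)}(H_m^{(m)} v_m) \leq \mathcal{E}^{(m)}(v_m),
\]
so $\sup_m \mathcal{E}(w_m) < +\infty$. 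Invoking Assumption \ref{A:connection} (iv) with the sequence $(v_m)_m$ yields $\sup_m \|w_m\|_{L^2(X,\mu)} < +\infty$, so $(w_m)_m$ is bounded in $(\mathcal{F}, \mathcal{E}_1)$.

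Since $(X,R)$ is compact, the embedding $\mathcal{F} \subset C(X)$ is compact by \cite[Lemma 9.7]{Ki12}, so any subsequence of $(w_m)_m$ admits a further subsequence converging uniformly on $X$ to some $\widetilde{u} \in C(X)$. Applied to the subsequences $(u_{m_k})_k$ and $(u_{m_k}(t))_k$ this yields (i) and (ii), respectively.

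\textbf{Main obstacle.} The genuinely delicate point is transferring estimates between objects living on the varying spaces $X^{(m)}$ with measures $\mu^{(m)}$ (the solutions $v_m$) and objects living on the target space $X$ with measure $\mu$ (the extensions $w_m$). The energy side is handled for free by (\ref{E:basic}), which makes $H_m(X)$ and $H_m(X^{(m)})$ isometric modulo constants. The $L^2$-side, however, is exactly what Assumption \ref{A:connection} (iv) was formulated to control; without such a hypothesis one would be forced into an ad hoc comparison between $\|\ext_m H_m^{(m)} v_m\|_{L^2(X,\mu)}$ and $\|H_m^{(m)} v_m\|_{L^2(X^{(m)},\mu^{(m)})}$ that need not be available in the general varying-space setup.
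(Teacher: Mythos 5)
Your proof is correct and follows exactly the route the paper intends: Proposition \ref{P:KSaccu} plus the uniform constants in (\ref{E:posdef}) give $\sup_m\mathcal{E}_1^{(m)}(v_m)<+\infty$, the energy isometry (\ref{E:basic}) with (\ref{E:projectmetric}) and Assumption \ref{A:connection} (iv) bound the extensions in $(\mathcal{F},\mathcal{E}_1)$, and the compact embedding $\mathcal{F}\subset C(X)$ of \cite[Lemma 9.7]{Ki12} yields the uniformly convergent subsequence, mirroring the proof of Corollary \ref{C:KigamiAA}. No gaps.
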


\subsection{Generalized strong resolvent convergence}\label{SS:KS_spectral}

The next result is an analog of Theorem \ref{T:approxsame} on varying spaces, it uses notions of convergence along a sequence of varying Hilbert spaces, \cite{KuwaeShioya03, Toelle06}, see Appendix \ref{S:generalized convergence}. The key ingredient is
Theorem \ref{T:convergence of forms} - a special case of \cite[Theorem 7.15, Corollary 7.16 and Remark 7.17]{Toelle10}, which constitute a natural generalization of \cite[Theorem 3.1]{Hino98} to the framework of varying Hilbert spaces, \cite{KuwaeShioya03}. 

\begin{theorem}\label{T:KS_spectral}
Suppose that 
\begin{equation}\label{E:restrictive}
b=\sum_i g_i\cdot \partial f_i\quad \text{and} \quad \hat{b}=\sum_i \hat{g}_i\cdot \partial \hat{f}_i
\end{equation}
are finite linear combinations with $f_i, \hat{f}_i, g_i, \hat{g}_i\in H_n(X)$ as in (\ref{E:equivclass}) and for any $m$ let
\begin{equation}\label{E:restrictivem}
b^{(m)}:=\sum_i g_i|_{X^{(m)}}\cdot \partial^{(m)}(f_i|_{X^{(m)}})\quad \text{and}\quad \hat{b}^{(m)}:=\sum_i \hat{g}_i|_{X^{(m)}}\cdot \partial^{(m)}(\hat{f}_i|_{X^{(m)}})
\end{equation}
as in (\ref{E:equivclassm}). Let $a\in H_n(X)$ be such that (\ref{E:elliptic}) holds and let $c\in C(X)$. Then $\lim_m \mathcal{L}^{\mathcal{Q}^{(m)}}=\mathcal{L}^{\mathcal{Q}}$ in the KS-generalized resolvent sense, and the following hold.
\begin{enumerate}
\item[(i)] If $f\in L^2(X,\mu)$, $u$ is the unique weak solution to (\ref{E:ellipticeq}) on $X$ and $u_m$ is the unique weak solution to (\ref{E:ellipticeq}) on $X^{(m)}$ with $\mathcal{L}^{\mathcal{Q}^{(m)}}$ and $\Phi_m f$ in place of $\mathcal{L}^{\mathcal{Q}}$ and $f$, then
we have $\lim_m u_m=u$ KS-strongly. Moreover, there is a sequence $(m_k)_k$ with $m_k\uparrow +\infty$ such that $\lim_k \ext_{m_k}H_{m_k}^{(m_k)}u_{m_k} =u$ uniformly on $X$. 
\item[(ii)] If $\mathring{u}\in L^2(X,\mu)$, $u$ is the unique solution to (\ref{E:paraboliceq}) on $X$ and $u_m$ is the unique weak solution to (\ref{E:paraboliceq}) on $X^{(m)}$ with $\mathcal{L}^{\mathcal{Q}^{(m)}}$ and $\Phi_m \mathring{u}$ in place of $\mathcal{L}^{\mathcal{Q}}$ and $\mathring{u}$, then for any $t>0$ we have 
we have $\lim_m u_m=u$ KS-strongly. Moreover, for any $t>0$ there is a sequence $(m_k)_k$ with $m_k\uparrow +\infty$ such that $\lim_k \ext_{m_k}H_{m_k}^{(m_k)} u_{m_k}(t) =u(t)$ uniformly on $X$. 
\end{enumerate}
\end{theorem}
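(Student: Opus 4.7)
The plan is to apply Theorem \ref{T:convergence of forms} by verifying conditions (i) and (ii) of Definition \ref{D:generalized convergence} for the families $(\mathcal{Q}^{(m)},\mathcal{F}^{(m)})$ and $(\mathcal{Q},\mathcal{F})$ along the KS-varying spaces supplied by Corollary \ref{C:energyimpliesL2}, following the template of Theorem \ref{T:approxsame}. After a shift as in Remark \ref{R:more_general_c_possible} I may assume $c$ satisfies (\ref{E:conseqforfieldssup}). The special form (\ref{E:restrictive})--(\ref{E:restrictivem}), combined with Corollary \ref{C:compatibleb} applied with $g\equiv 1$, yields $\sup_m \|b^{(m)}\|_{\mathcal{H}^{(m)}}+\sup_m \|\hat{b}^{(m)}\|_{\mathcal{H}^{(m)}}<\infty$, so Corollary \ref{C:KSuniformHardy} and Proposition \ref{P:LukeKS} give a uniform form equivalence
\[C\,\mathcal{E}^{(m)}_1(f)\leq \mathcal{Q}^{(m)}_1(f)\leq C^{-1}\,\mathcal{E}^{(m)}_1(f),\quad f\in\mathcal{F}^{(m)},\]
for some $C>0$ and all sufficiently large $m$, exactly analogous to (\ref{E:compareforms}).

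Condition (i) follows from this equivalence as in Theorem \ref{T:approxsame}: any KS-weak limit $f$ of a sequence $f_m\in\mathcal{F}^{(m)}$ with $\varliminf_m \mathcal{Q}^{(m)}_1(f_m)<\infty$ yields $\sup_k \mathcal{E}_1^{(m_k)}(f_{m_k})<\infty$ along a subsequence, so Corollary \ref{C:energyimpliesL2}(ii) together with a Banach--Saks argument places $f$ in $\mathcal{F}$. For condition (ii), by density of $H_\ast(X)$ in $(\mathcal{F},\mathcal{E}_1)$ and (\ref{E:approxbyPH}), it suffices to show, for every $w\in H_n(X)$ and every sequence $f_{m_k}\in\mathcal{F}^{(m_k)}$ converging KS-weakly to $u\in\mathcal{F}$ with $\sup_k \mathcal{Q}^{(m_k)}_1(f_{m_k})<\infty$, the convergence
\[\lim_k \mathcal{Q}^{(m_k)}\!\bigl(w|_{X^{(m_k)}}, f_{m_k}\bigr) = \mathcal{Q}(w,u).\]
The zero-order piece is handled by Lemma \ref{L:productsKSweakly} combined with (\ref{E:connection}). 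For the drift terms I substitute (\ref{E:restrictivem}), apply the Leibniz rule for $\partial^{(m_k)}$ to reduce each pairing with $\partial^{(m_k)} f_{m_k}$ to an energy pairing $\mathcal{E}^{(m_k)}(\varphi_i, f_{m_k})$ with fixed $\varphi_i\in H_\ast(X)|_{X^{(m_k)}}$, and then pass to the limit using Corollary \ref{C:jumpsvanish} and the KS-weak convergence of $f_{m_k}$ in $\mathcal{F}^{(m_k)}$ (extracted from the energy bound along a further subsequence as in Corollary \ref{C:energyimpliesL2}). The Hodge-type decomposition $\hat{b}=\partial F+\eta$ with $\eta\in\ker\partial^{\ast}$, already exploited in the proof of Theorem \ref{T:approxsame}, turns the $\hat{b}$-term into a pure energy pairing that likewise converges. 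For the principal term I expand via Proposition \ref{P:indep}, use $a\in H_n(X)$, and conclude via Assumption \ref{A:convergence} and Corollary \ref{C:jumpsvanish}.

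The main obstacle is the interaction of the KS-weak convergence of $f_{m_k}$ with multiplication by the fixed but non-harmonic coefficients $a|_{X^{(m)}}$ and $g_i|_{X^{(m)}}$: the product $w|_{X^{(m_k)}} f_{m_k}$ is not harmonic on $X^{(m_k)}$ and its energy is only controlled through (\ref{E:pointwisemult}), so one cannot directly invoke (\ref{E:formaslimit}) or (\ref{E:weakconvenergy}). This is precisely where Assumption \ref{A:decay} is needed: (\ref{E:bumpdecayproducts}) permits replacement of $w|_{X^{(m)}} f_{m_k}$ (and similar products) by its harmonic projection $H_m^{(m)}(w|_{X^{(m)}} f_{m_k})$ with vanishing energy error, after which all objects live in $H_m(X^{(m)})$ and can be identified with $H_m(X)$-functions via $\ext_m$, where Assumption \ref{A:convergence} becomes applicable. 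Once (i) and (ii) are established, Theorem \ref{T:convergence of forms} delivers the KS-generalized strong resolvent convergence $\lim_m \mathcal{L}^{\mathcal{Q}^{(m)}}=\mathcal{L}^{\mathcal{Q}}$ and hence the KS-strong $L^2$-convergence of the solutions in (i) and (ii). The subsequential uniform convergence on $X$ of $\ext_{m_k}H_{m_k}^{(m_k)}u_{m_k}$ then follows by combining this KS-strong convergence with the compactness statement of Corollary \ref{C:KSaccu}, precisely parallel to the way Theorem \ref{T:approxsame} deduced uniform convergence from Corollary \ref{C:KigamiAA}.
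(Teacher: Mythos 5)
Your overall strategy coincides with the paper's: verify the two conditions of Definition \ref{D:generalized convergence}, invoke Theorem \ref{T:convergence of forms} for the KS-strong $L^2$-convergence, and combine with Corollary \ref{C:KSaccu} for the uniform subsequential convergence. However, there is a concrete gap at the heart of the verification of condition (ii). You claim that (\ref{E:bumpdecayproducts}) "permits replacement of $w|_{X^{(m)}}f_{m_k}$ (and similar products) by its harmonic projection $H_m^{(m)}(w|_{X^{(m)}}f_{m_k})$ with vanishing energy error." Assumption \ref{A:decay}(ii) only asserts this for products $u|_{X^{(m)}}w|_{X^{(m)}}$ of restrictions of two \emph{fixed} $n$-harmonic functions; it says nothing about products in which one factor is the varying sequence $f_{m_k}\in\mathcal{F}^{(m_k)}$, and for such sequences only the sup-norm decay (\ref{E:bumpdecay}) is available — the energy of $v-H_m^{(m)}v$ need not vanish for a general bounded-energy sequence (on metric graphs, $f_{m_k}$ can carry persistent within-edge oscillation). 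The paper circumvents this differently: after the Leibniz reduction the problematic terms are energy pairings $\mathcal{E}^{(m_k)}(\hat f_i|_{X^{(m_k)}},(w\hat g_i)|_{X^{(m_k)}}u_k)$ against a fixed $m$-harmonic function, so by orthogonality of $H_{m}^{(m)}$ only the harmonic projection of the product matters, and what one actually needs is that these projections converge weakly in energy to the right limit. That is the content of Lemma \ref{L:Arzela-Ascoli & uniqueness of limits} (in particular (\ref{E:energiesonly})), applied to the product sequences $(a|_{X^{(m_k)}}u_k)_k$ and $((w\hat g_i)|_{X^{(m_k)}}u_k)_k$ after identifying their KS-weak $L^2$-limits via Lemma \ref{L:productsKSweakly}. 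Proving that lemma — i.e., that a KS-weakly convergent, energy-bounded sequence has $\ext_{m}H_m^{(m)}u_m$ converging weakly in $(\mathcal{F},\mathcal{E})$ and uniformly to the \emph{same} limit, which uses (\ref{E:adjoint}), (\ref{E:connection}), (\ref{E:basic}), (\ref{E:L2consist}) and (\ref{E:bumpdecay}) — is the technical core of the argument and is not supplied by your "Banach–Saks argument", which does not directly make sense across the varying spaces; the same lemma is also what actually settles your condition (i).

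A secondary point: the Hodge decomposition $\hat b=\partial F+\eta$ that you import from the proof of Theorem \ref{T:approxsame} does not transfer to the varying-space setting, since $\hat b^{(m)}$ is defined through the explicit representation (\ref{E:restrictivem}) and its Hodge decomposition in $\mathcal{H}^{(m)}$ bears no controlled relation to that of $\hat b$ in $\mathcal{H}$. It is also unnecessary: the finite-sum form of $\hat b^{(m)}$ together with the Leibniz rule (which you do mention) is exactly what replaces it. The correct use of (\ref{E:bumpdecayproducts}) occurs only at one later point, namely to approximate the fixed product $(w\hat g_i)|_{X^{(m)}}$ in $\mathcal{E}^{(m)}$-energy by $H_{n'}(w\hat g_i)|_{X^{(m)}}$ so that Corollary \ref{C:compatibleb} becomes applicable.
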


A version of Theorem \ref{T:KS_spectral} for more general coefficients is stated below in Theorem \ref{T:general}. The proof of Theorem \ref{T:KS_spectral} makes use of the following key fact. 

\begin{lemma}\label{L:Arzela-Ascoli & uniqueness of limits}
Suppose $(n_k)_k$ is a sequence with $n_k\uparrow +\infty$ and $(u_k)_k$ is a sequence with $u_k
 \in L^2(X^{(n_k)}, \mu^{(n_k)})$ converging to $u\in L^2(X,\mu)$ KS-weakly and satisfying $\sup_k \E^{(n_k)}_1(u_k)<\infty$. Then we have $u \in \mathcal{F}$, and there is a sequence $(k_j)_j$ with $k_j\uparrow +\infty$ such that 
\begin{enumerate}
\item[(i)] $\lim_j u_{n_{k_j}}=u$ KS-weakly w.r.t. (\ref{E:KSenergy}), and moreover, for any $f\in\mathcal{F}$ and any sequence $(f_j)_j$ such that $f_j\in\mathcal{F}^{(n_{k_j})}$ and $\lim_j f_j=f$ KS-strongly w.r.t. (\ref{E:KSenergy}) along $(n_{k_j})_j$ we have 
\begin{equation}\label{E:energiesonly}
\lim_j \mathcal{E}^{(n_{k_j})}(f_j,u_{n_{k_j}})=\mathcal{E}(f,u). 
\end{equation}
\item[(ii)] $\lim_j \ext_{n_{k_j}}H_{n_{k_j}}^{(n_{k_j})}u_{n_{k_j}}=u$ uniformly on $X$.
\end{enumerate}
\end{lemma}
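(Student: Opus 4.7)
The natural strategy is to work with the lifted sequence $w_k := \ext_{n_k} H_{n_k}^{(n_k)} u_k \in H_{n_k}(X) \subset \mathcal{F}$ and exploit compactness. First I would observe that by (\ref{E:projectmetric}) and the isometry (\ref{E:basic}) we have $\mathcal{E}(w_k) = \mathcal{E}^{(n_k)}(H_{n_k}^{(n_k)}u_k) \leq \mathcal{E}^{(n_k)}(u_k)$, while Assumption \ref{A:connection}(iv) controls $\|w_k\|_{L^2(X,\mu)}$; thus $(w_k)_k$ is bounded in $(\mathcal{F},\mathcal{E}_1)$. The compact embedding $\mathcal{F}\hookrightarrow C(X)$ from \cite[Lemma 9.7]{Ki12} then extracts a subsequence $(w_{k_j})_j$ converging uniformly to some $\tilde u \in C(X)$, and a Banach--Saks type argument combined with weak compactness in $\mathcal{F}/\sim$ shows $\tilde u \in \mathcal{F}$.

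The main obstacle is identifying the uniform limit $\tilde u$ with the KS-weak $L^2$ limit $u$ - this is where the different identification operators need to be reconciled. My plan is to test against $v \in H_n(X)$ for fixed $n$. Using Assumption \ref{A:decay}(i) I would first replace $u_{n_{k_j}}$ by $H_{n_{k_j}}^{(n_{k_j})} u_{n_{k_j}} = w_{k_j}|_{X^{(n_{k_j})}}$ up to an error vanishing in sup-norm on $X^{(n_{k_j})}$; then (\ref{E:connection}) and the uniform $L^2$-boundedness of $(w_{k_j})_j$ allow one to replace $w_{k_j}|_{X^{(n_{k_j})}}$ by $\Phi_{n_{k_j}} w_{k_j}$. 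Taking adjoints and invoking (\ref{E:adjoint}) yields
\begin{equation*}
\lim_j \langle u_{n_{k_j}},\Phi_{n_{k_j}}v\rangle_{L^2(X^{(n_{k_j})},\mu^{(n_{k_j})})} = \lim_j \langle w_{k_j}, \Phi_{n_{k_j}}^\ast\Phi_{n_{k_j}}v\rangle_{L^2(X,\mu)} = \langle \tilde u,v\rangle_{L^2(X,\mu)},
\end{equation*}
while KS-weak convergence of $(u_k)$ forces the same limit to equal $\langle u,v\rangle_{L^2(X,\mu)}$. Density of $H_\ast(X)$ in $L^2(X,\mu)$ then gives $\tilde u = u$, proving $u \in \mathcal{F}$ and statement (ii).

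For the KS-weak convergence in (i) relative to (\ref{E:KSenergy}), I would test against $v_j := (H_{n_{k_j}}v)|_{X^{(n_{k_j})}}$ for $v \in \mathcal{F}$. Since $v_j$ represents an element of $H_{n_{k_j}}(X^{(n_{k_j})})/\!\!\sim$, the $\mathcal{E}^{(n_{k_j})}$-orthogonality of $\id - H_{n_{k_j}}^{(n_{k_j})}$ onto that subspace gives
\begin{equation*}
\mathcal{E}^{(n_{k_j})}(u_{n_{k_j}},v_j) = \mathcal{E}^{(n_{k_j})}(H_{n_{k_j}}^{(n_{k_j})}u_{n_{k_j}},v_j) = \mathcal{E}(w_{k_j},H_{n_{k_j}}v),
\end{equation*}
and weak convergence $w_{k_j}\rightharpoonup u$ in $(\mathcal{F}/\sim,\mathcal{E})$ together with (\ref{E:approxbyPH}) makes this tend to $\mathcal{E}(u,v)$. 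The $L^2$ part of the $\mathcal{E}_1$-pairing is handled analogously via (\ref{E:connection}), (\ref{E:resistanceest}) and the KS-weak $L^2$-convergence of $(u_k)$ already established.

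For the final claim (\ref{E:energiesonly}), given $f \in \mathcal{F}$ and $f_j \to f$ KS-strongly w.r.t. (\ref{E:KSenergy}), I would unpack Definition \ref{D:KS} (or the analog in Definition \ref{D:generalized convergence}) to pick approximants $\varphi_n \in \mathcal{F}$ with $\mathcal{E}_1(\varphi_n - f)\to 0$ and $\varlimsup_j \mathcal{E}_1^{(n_{k_j})}((H_{n_{k_j}}\varphi_n)|_{X^{(n_{k_j})}} - f_j) \to 0$ as $n\to \infty$. Applying the previous paragraph with $v = \varphi_n$ and then controlling the two remaining error terms by Cauchy--Schwarz (using the uniform bound on $\mathcal{E}^{(n_{k_j})}(u_{n_{k_j}})^{1/2}$ and on $\mathcal{E}(u)^{1/2}$) together with a standard diagonal argument yields (\ref{E:energiesonly}). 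The hard part throughout is to correctly interweave the three different identification mechanisms - the $\Phi_m$ on $L^2$, the harmonic restriction on $\mathcal{F}^{(m)}$, and the uniform approximation coming from energy boundedness - and Assumptions \ref{A:decay} and \ref{A:connection} are exactly what makes them compatible.
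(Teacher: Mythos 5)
Your proposal is correct and follows essentially the same route as the paper: the same lift $v_k=\ext_{n_k}H_{n_k}^{(n_k)}u_k$ with the energy bound from (\ref{E:basic}) and (\ref{E:projectmetric}), the same identification chain restriction $\to\Phi_{n_k}\to\Phi_{n_k}^\ast\Phi_{n_k}$ tested against $H_n(X)$ via (\ref{E:bumpdecay}), (\ref{E:connection}) and (\ref{E:adjoint}), and the same orthogonality trick for the energy pairing. The only differences are cosmetic ordering ones — you obtain the uniform limit in (ii) directly from the compact embedding $\mathcal{F}\subset C(X)$, whereas the paper runs an equicontinuity/Arzel\`a--Ascoli argument on $v_k-v_k(x_0)$ and identifies the additive constants separately, and it dispatches (\ref{E:energiesonly}) by citing Corollary \ref{C:energyimpliesL2} rather than your explicit approximation.
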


\begin{proof}
Let $v_k : = \ext_{n_k} H_{n_k}^{(n_k)} u_{k}$. By hypothesis and (\ref{E:basic}) we have  
\begin{equation}\label{E:unienergybound}
\sup_k \E(v_k) = \sup_k \E^{(n_k)}(H_{n_k}^{(n_k)}(u_k)) \leq \sup_k \E^{(n_k)}(u_{n_k}) < + \infty. 
\end{equation}
Since $v_k|_{X^{(n_k)}} = H_{n_k}^{(n_k)} u_{k}$, (\ref{E:unienergybound}), (\ref{E:unitotalmass}) and (\ref{E:bumpdecay}) allow to conclude that 
\begin{equation}\label{E:L2}
\lim_k \left\|v_{k}|_{X^{(n_{k})}}-u_{k}\right\|_{L^2(X^{(n_{k})},\mu^{(n_{k})})}=0,
\end{equation}
what implies that $\lim_k v_k|_{X^{(n_{k})}}=u$ KS-weakly.

We now claim that for any $n$ and any $w\in H_n(X)$ we have 
\begin{equation}\label{E:claim1}
\lim_k \left\langle w, v_k\right\rangle_{L^2(X,\mu)}=\left\langle w,u\right\rangle_{L^2(X,\mu)}. 
\end{equation} 
We clearly have $\lim_k \Phi_{n_k} w=w$ KS-strongly. Therefore 
\[\left\langle w,u\right\rangle_{L^2(X,\mu)}=\lim_k\left\langle \Phi_{n_k} w, v_k|_{X^{(n_{k})}}\right\rangle_{L^2(X^{n_k},\mu^{(n_k)})},\]
and using  (\ref{E:connection}) and (\ref{E:unienergybound}) this limit is seen to equal 
\[\lim_k \left\langle \Phi_{n_k}w, \Phi_{n_k}v_k\right\rangle_{L^2(X^{n_k},\mu^{(n_k)})}=\lim_k \left\langle\Phi_{n_k}^\ast \Phi_{n_k}w, v_k\right\rangle_{L^2(X,\mu)}.\]
Applying (\ref{E:adjoint}) we arrive at (\ref{E:claim1}). By (\ref{E:unienergybound}), and since (\ref{E:L2consist}) implies
$\sup_k \left\|v_k\right\|_{L^2(X,\mu)}<+\infty$, we can find a sequence $(k_j)_j$ with $\lim_j k_j=+\infty$ such that 
$(u_{k_j})_j$ converges KS-weakly w.r.t. (\ref{E:KSenergy}) to a limit $u_\mathcal{E}\in\mathcal{F}$ and $(v_{k_j})_j$ converges weakly in $L^2(X,\mu)$ to a limit $\overline{u}_\mathcal{E}\in\mathcal{F}$. Since $\bigcup_{n\geq 0} H_n(X)$ is dense in $L^2(X,\mu)$ we have $\overline{u}_{\mathcal{E}}=u$ by (\ref{E:claim1}), what shows that $u\in\mathcal{F}$. We now verify that 
\begin{equation}\label{E:claim2}
\overline{u}_{\mathcal{E}}=u_\mathcal{E}.
\end{equation}
For any $w\in H_n(X)$ the equalities
\begin{align}
\mathcal{E}_1(w,\overline{u}_{\mathcal{E}})&=\lim_j \left\lbrace \mathcal{E}(w,v_{k_j})+\left\langle w, v_{k_j}\right\rangle_{L^2(X,\mu)}\right\rbrace\notag\\
&=\lim_j \left\lbrace \mathcal{E}(w,v_{k_j})  -\big\langle \Phi_{n_{k_j}}^\ast \Phi_{n_{k_j}} w, v_{k_j}\big\rangle_{L^2(X,\mu)} \right\rbrace\notag\\
&=\lim_j \left\lbrace \mathcal{E}^{(n_{k_j})}(w|_{X^{(n_{k_j})}},v_{k_j}|_{X^{(n_{k_j})}})  -\big\langle \Phi_{n_{k_j}} w, \Phi_{n_{k_j}} v_{k_j}\big\rangle_{L^2(X^{(n_{k_j})},\mu^{(n_{k_j})})} \right\rbrace\notag
\end{align}
hold, the second and third equality due to (\ref{E:adjoint}) and (\ref{E:basic}), respectively. Using (\ref{E:connection}) twice on the second summands in the last line, the above limit is seen to equal
\[\lim_j \left\lbrace \mathcal{E}^{(n_{k_j})}(w|_{X^{(n_{k_j})}},v_{k_j}|_{X^{(n_{k_j})}})  -\big\langle w|_{X^{(n_{k_j})}},  v_{k_j}|_{X^{(n_{k_j})}}\big\rangle_{L^2(X^{(n_{k_j})},\mu^{(n_{k_j})})} \right\rbrace.\]
For $j$ so large that $n_{k_j}\geq n$ the function $w|_{X^{(n_{k_j})}}$ is an element of $H_{n_{k_j}}(X^{(n_{k_j})})$, so that by orthogonality in $\mathcal{F}^{(n_{k_j})}$ we can replace $v_{k_j}|_{X^{(n_{k_j})}}=H_{n_{k_j}}^{(n_{k_j})}u_{k_j}$ in the first summand by $u_{k_j}$. In the second term we can make the same replacement by (\ref{E:bumpdecay}) and (\ref{E:unitotalmass}), so that the above can be rewritten 
\begin{align}
\lim_j \left\lbrace \mathcal{E}^{(n_{k_j})}(w|_{X^{(n_{k_j})}},u_{k_j})  -\big\langle w|_{X^{(n_{k_j})}},  u_{k_j}\big\rangle_{L^2(X^{(n_{k_j})},\mu^{(n_{k_j})})} \right\rbrace & =\lim_j \mathcal{E}_1^{(n_{k_j})}(w|_{X^{(n_{k_j})}},u_{k_j})\notag\\
&=\mathcal{E}_1(w,u_{\mathcal{E}}),\notag
\end{align}
because $\lim_j w|_{X^{(n_{k_j})}}=w$ KS-strongly w.r.t. (\ref{E:KSenergy}). Since $\bigcup_{n\geq 0} H_n(X)$ is dense in $\mathcal{F}$, this implies (\ref{E:claim2}) and therefore the first statement of (i), so far for the sequence $(u_{k_j})_j$. The statement on the limit (\ref{E:energiesonly}) in (i) follows by Corollary \ref{C:energyimpliesL2}.

To save notation in the proof of (ii) we now write $(u_k)_k$ for the sequence $(u_{k_j})_j$ extracted in (i). Let $x_0\in V_0$. Then (\ref{E:resistanceest}) implies that $(v_k-v_k(x_0))_k$ is an equicontinuous and equibounded sequence of functions on $X$, so that by Arzel\`a-Ascoli we can find a subsequence $(v_{k_j}-v_{k_j}(x_0))_j$ which converges uniformly on $X$ to a function $w_{x_0}\in C(X)$. Since $\mu$ is finite, this implies $\lim_j  v_{k_j}-v_{k_j}(x_0)=w_{x_0}$ in $L^2(X,\mu)$.
By (\ref{E:connection}) and (\ref{E:unienergybound}) we also have 
\[\lim_j \left\|v_{k_j}|_{X^{(n_{k_j})}}-v_{k_j}(x_0)-\Phi_{n_{k_j}}(v_{k_j}-v_{k_j}(x_0))\right\|_{L^2(X^{(n_{k_j})},\mu^{(n_{k_j})})}=0,\] 
so that combining, we see that $\lim_j (v_{k_j}|_{X^{(n_{k_j})}}-u_{k_j}|_{X^{(n_{k_j})}}(x_0))=w_{x_0}$ KS-strongly and therefore also KS-weakly. Since $\lim_k v_k|_{X^{(n_{k})}}=u$ KS-weakly by (\ref{E:L2}), we may conclude that $\lim_k v_k|_{X^{(n_{k})}}(x_0)=u-w_{x_0}$ KS-weakly. In particular, by \cite[Lemma 2.3]{KuwaeShioya03}, 
\[\sup_j|v_{k_j}|_{X^{(n_{k_j})}}(x_0)| \mu(X^{(n_{k_j})})^{1/2}=\sup_j \left\|v_{k_j}|_{X^{(n_{k_j})}}(x_0)\right\|_{L^2(X^{(n_{k_j})},\mu^{(n_{k_j})})}<+\infty.\] 
Since $\lim_m \mu^{(m)}(X^{(m)})=\mu(X)>0$ it follows that $v_{k_j}|_{X^{(n_{k_j})}}(x_0)$ is a bounded sequence of real numbers and therefore has a subsequence converging to some limit $z\in\mathbb{R}$. Keeping the same notation for this subsequence, we can use (\ref{E:connection}) and (\ref{E:unitotalmass}) to conclude that $\lim_j \big\|v_{k_j}|_{X^{(n_{k_j})}}(x_0)-\Phi_{n_{k_j}}z\big\|_{L^2(X^{(n_{k_j})},\mu^{(n_{k_j})})}=0$, hence $\lim_j v_{k_j}|_{X^{(n_{k_j})}}(x_0)=z$ KS-weakly and therefore necessarily $z=u-w_{x_0}$. This implies that $\lim_j v_{k_j}=\lim_j  (v_{k_j}-v_{k_j}(x_0)) + \lim_j v_{k_j}(x_0)=u$ uniformly on $X$ as stated in (ii). Clearly the statements in (i) remain true for this subsequence.
\end{proof}

We prove Theorem \ref{T:KS_spectral}. 
\begin{proof}
Since the operators $\mathcal{L}^{\mathcal{Q}^{(m)}}$ obey the sector condition (\ref{E:sectorcond}) with the same sector constant, Theorem \ref{T:convergence of forms} will imply the desired convergence, provided that the forms $\mathcal{Q}^{(m)}$ and $\mathcal{Q}$ satisfy the conditions in Definition \ref{D:generalized convergence}. Corollary \ref{C:KSaccu} then takes care of the claimed uniform convergences.

Without loss of generality we may (and do) assume that the function $c\in C(X)$ satisfies condition $\eqref{E:conseqforfieldssup}$. Otherwise we use the same shifting argument as in the proof of Theorem \ref{T:approxsame}, the statements on uniform convergence then follow using Corollary \ref{C:KSaccu}.

By (\ref{E:smallfields}), (\ref{E:cisnice}), (\ref{E:posdef}) and (\ref{E:upperconst}) together with Proposition \ref{P:Luke} and Corollaries \ref{C:KSuniformHardy} and \ref{C:compatibleb} we can find a constant $C>0$ such that for any sufficiently large $m$ we have
\begin{equation}\label{E:KScompareforms}
C\E_1^{(m)}(f) \leq \mathcal{Q}^{(m)}_{1} (f)\leq C^{-1}\E_1^{(m)}(f), \quad f \in \mathcal{F}^{(m)}.
\end{equation}

To check condition (i) in Definition \ref{D:generalized convergence}, suppose that $(u_m)_m$ is a sequence with $u_m\in L^2(X^{(m)},\mu^{(m)})$ converging KS-weakly to a function $u\in L^2(X,\mu)$ and such that  $\varliminf_m \mathcal{Q}^{(m)}_1(u_m)<+\infty$. It has a subsequence $(u_{m_k})_k$ which by (\ref{E:KScompareforms}) satisfies $\sup_k \mathcal{E}^{(m_k)}(u_{m_k})<+\infty$, and by Lemma \ref{L:Arzela-Ascoli & uniqueness of limits} we then know that $u\in\mathcal{F}$, what implies the condition.

To verify condition (ii), suppose that $u\in\mathcal{F}$, $(m_k)_k$ is a sequence with $m_k\uparrow +\infty$ and that $u_k\in L^2(X^{(m_k)},\mu^{(m_k)})$ are such that $\lim_k u_k=u$ KS-weakly and $\sup_k \mathcal{Q}_1^{(m_k)}(u_k)<+\infty$. By (\ref{E:KScompareforms}) we have $\sup_k \mathcal{E}^{(m_k)}_1(u_k)<+\infty$. Now let $w\in H_n(X)$. Clearly $\lim_m w|_{X^{(m)}}=w$ KS-strongly. By Lemma \ref{L:productsKSweakly} we may assume that along $(m_k)_k$ we also have 
$\lim_k a|_{X^{(m_k)}}u_k=au$ and $\lim_k (w\hat{g}_i)|_{X^{(m_k)}}u_k=w\hat{g}_iu$ KS-weakly for all $i$, otherwise we pass to a suitable subsequence. By (\ref{E:pointwisemult}) also $\sup_k \mathcal{E}^{(m_k)}_1(a|_{X^{(m_k)}}u_k)<+\infty$ and $\sup_k \mathcal{E}^{(m_k)}_1((w\hat{g}_i)|_{X^{(m_k)}}u_k)<+\infty$. By Lemma \ref{L:Arzela-Ascoli & uniqueness of limits} we can therefore find a sequence $(k_j)_j$ as stated so that (i) and (ii) in Lemma \ref{L:Arzela-Ascoli & uniqueness of limits} hold simultaneously for the sequences $(u_{k_j})_j$, $(a|_{X^{(m_{k_j})}}u_{k_j})_j$ and  $((w\hat{g}_i)|_{X^{(m_{k_j})}}u_{k_j})_j$ with limits $u$, $au$ and $w\hat{g}_iu$, respectively. Our first claim is that
\begin{equation}\label{E:diffusionterm}
\lim_j \left\langle \partial^{(m_{k_j})}(w|_{X^{(m_{k_j})}}), a|_{X^{(m_{k_j})}}\cdot \partial^{(m_{k_j})} u_{k_j}\right\rangle_{\mathcal{H}^{(m_{k_j})}}=\left\langle \partial w, a\cdot \partial u\right\rangle_{\mathcal{H}}.
\end{equation}
To see this note first that by the Leibniz rule for $\partial^{(m_{k_j})}$ each element of the sequence on the left hand side equals
\[\left\langle \partial^{(m_{k_j})}(w|_{X^{(m_{k_j})}}), \partial^{(m_{k_j})}(a|_{X^{(m_{k_j})}} u_{k_j})\right\rangle_{\mathcal{H}^{(m_{k_j})}} - \left\langle \partial^{(m_{k_j})}(w|_{X^{(m_{k_j})}}), u_{k_j}\cdot \partial^{(m_{k_j})} (a|_{X^{(m_{k_j})}}) \right\rangle_{\mathcal{H}^{(m_{k_j})}}.\]
The first term converges to $\left\langle \partial w, \partial(a u)\right\rangle_\mathcal{H}$ by (\ref{E:energiesonly}). In the second summand we can replace $u_{k_j}$ by $H^{(m_{k_j})}_{m_{k_j}}u_{k_j}$, note that by (\ref{E:boundedaction}) and (\ref{E:bumpdecay}) we have 
\[\lim_j \left\|(u_{m_{k_j}}-H^{(m_{k_j})}_{m_{k_j}}u_{k_j})\cdot \partial^{(m_{k_j})}(a|_{X^{(m_{k_j})}})\right\|_{\mathcal{H}^{(m_{k_j})}}=0.\]
By Lemma \ref{L:Arzela-Ascoli & uniqueness of limits} (ii) we also have 
\[\lim_j \left\|(H^{(m_{k_j})}_{m_{k_j}}u_{k_j}-u|_{X^{(m_{k_j})}})\cdot \partial^{(m_{k_j})}(a|_{X^{(m_{k_j})}})\right\|_{\mathcal{H}^{(m_{k_j})}}=0, \]
so that 
\begin{align}
\lim_j &\left\langle \partial^{(m_{k_j})}(w|_{X^{(m_{k_j})}}), u_{k_j}\cdot \partial^{(m_{k_j})} (a|_{X^{(m_{k_j})}}) \right\rangle_{\mathcal{H}^{(m_{k_j})}}\notag\\
&=\lim_j \left\langle \partial^{(m_{k_j})}(w|_{X^{(m_{k_j})}}), u\cdot \partial^{(m_{k_j})} (a|_{X^{(m_{k_j})}}) \right\rangle_{\mathcal{H}^{(m_{k_j})}}\notag\\
&=\left\langle \partial w, u\cdot \partial a\right\rangle_{\mathcal{H}}\notag
\end{align}
by Corollary \ref{C:compatibleb} and polarization. Using the Leibniz rule for $\partial$ we arrive at (\ref{E:diffusionterm}). We next claim that 
\begin{equation}\label{E:thirdterm}
\lim_j \left\langle w|_{X^{(m_{k_j})}}\cdot \hat{b}^{(m_{k_j})},\partial^{(m_{k_j})}u_{m_{k_j}}\right\rangle_{\mathcal{H}^{(m_{k_j})}}=\left\langle w\cdot \hat{b},\partial u\right\rangle_{\mathcal{H}}.
\end{equation}
Each element of the sequence on the left hand side is a finite linear combination with summands
\begin{multline}
\left\langle \partial^{(m_{k_j})}(\hat{f}_i|_{X^{(m_{k_j})}}),\partial^{(m_{k_j})}((w\hat{g}_i)|_{X^{(m_{k_j})}}u_{k_j})\right\rangle_{\mathcal{H}^{(m_{k_j})}}\notag\\
-\left\langle \partial^{(m_{k_j})}(\hat{f}_i|_{X^{(m_{k_j})}}),u_{k_j}\cdot\partial^{(m_{k_j})}((w\hat{g}_i)|_{X^{(m_{k_j})}})\right\rangle_{\mathcal{H}^{(m_{k_j})}}.
\end{multline}
The first term converges to $\left\langle \partial\hat{f}_i, \partial (w\hat{g}_iu)\right\rangle_\mathcal{H}$ by (\ref{E:energiesonly}). To see that 
\begin{equation}\label{E:intermed}
\lim_j\left\langle \partial^{(m_{k_j})}(\hat{f}_i|_{X^{(m_{k_j})}}),u_{k_j}\cdot\partial^{(m_{k_j})}((w\hat{g}_i)|_{X^{(m_{k_j})}})\right\rangle_{\mathcal{H}^{(m_{k_j})}}=\left\langle \partial\hat{f}_i,  u\cdot\partial(w\hat{g}_i)\right\rangle_\mathcal{H}
\end{equation}
let $\varepsilon>0$ and choose $n'$ so that by (\ref{E:approxbyPH}) we have
\begin{equation}\label{E:closeonX}
\mathcal{E}(H_{n'}(w\hat{g}_i)-w\hat{g}_i)^{1/2}<\varepsilon \left\|u\right\|_{\sup}^{-1}\mathcal{E}(\hat{f}_i)^{-1/2}.
\end{equation}
For any $j$ so that $m_{k_j}\geq n'$ we have 
\[H_{m_{k_j}}^{(m_{k_j})}((w\hat{g}_i)|_{X^{(m_{k_j})}})=H_{m_{k_j}}(w\hat{g}_i)|_{X^{(m_{k_j})}}=H_{n'}(w\hat{g}_i)|_{X^{(m_{k_j})}}\]
and by (\ref{E:bumpdecayproducts}) therefore 
\begin{equation}\label{E:closeonXm}
\mathcal{E}^{(m_{k_j})}(H_{n'}(w\hat{g}_i)|_{X^{(m_{k_j})}}-(w\hat{g}_i)|_{X^{(m_{k_j})}})^{1/2}<\varepsilon \mathcal{E}(f_i)^{-1/2}\mathcal{E}(u)^{-1/2}
\end{equation}
for large enough $j$. Since as before we can replace $u_{k_j}$ by $u|_{X^{(m_{k_j})}}$, (\ref{E:closeonXm}) shows that 
\begin{multline}
\lim_j |\left\langle \partial^{(m_{k_j})}(\hat{f}_i|_{X^{(m_{k_j})}}),u_{k_j}\cdot\partial^{(m_{k_j})}((w\hat{g}_i)|_{X^{(m_{k_j})}})\right\rangle_{\mathcal{H}^{(m_{k_j})}} \notag\\
- \left\langle\partial^{(m_{k_j})}(\hat{f}_i|_{X^{(m_{k_j})}}),u\cdot\partial^{(m_{k_j})}(H_{n'}(w\hat{g}_i)|_{X^{(m_{k_j})}})\right\rangle_{\mathcal{H}^{(m_{k_j})}}|<\frac{\varepsilon}{2}.
\end{multline} 
By Corollary \ref{C:compatibleb} and (\ref{E:closeonX}) we have 
\[\lim_j |\left\langle\partial^{(m_{k_j})}(\hat{f}_i|_{X^{(m_{k_j})}}),u\cdot\partial^{(m_{k_j})}(H_{n'}(w\hat{g}_i)|_{X^{(m_{k_j})}})\right\rangle_{\mathcal{H}^{(m_{k_j})}}-\left\langle \partial\hat{f}_i,u\cdot\partial(w\hat{g}_i)\right\rangle_{\mathcal{H}}|<\frac{\varepsilon}{2}.\]
Since $\varepsilon$ was arbitrary, we can combine these two estimates to conclude (\ref{E:intermed}) and therefore (\ref{E:thirdterm}). The identity
\begin{equation}\label{E:secondterm}
\lim_j \left\langle u_{k_j}\cdot b^{(m_{k_j})},\partial^{(m_{k_j})}(w|_{X^{(m_{k_j})}})\right\rangle_{\mathcal{H}^{(m_{k_j})}}=\left\langle u\cdot b,\partial w\right\rangle_\mathcal{H}
\end{equation}
follows by linearity from the fact that by Lemma \ref{L:Arzela-Ascoli & uniqueness of limits} (ii) and Corollary \ref{C:compatibleb} we have
\begin{align}
\lim_j &\left\langle (u_{k_j}g_i|_{X^{(m_{k_j})}})\cdot \partial^{(m_{k_j})}f_i,\partial^{(m_{k_j})}(w|_{X^{(m_{k_j})}})\right\rangle_{\mathcal{H}^{(m_{k_j})}}\notag\\
&=\lim_j\left\langle (ug_i|_{X^{(m_{k_j})}})\cdot \partial^{(m_{k_j})}f_i,\partial^{(m_{k_j})}(w|_{X^{(m_{k_j})}})\right\rangle_{\mathcal{H}^{(m_{k_j})}}\notag\\
&= \left\langle (ug_i)\cdot \partial f_i,\partial w\right\rangle_{\mathcal{H}}. \notag
\end{align}
Together with the obvious identity
\[\lim_j \left\langle (cw)|_{X^{(m_{k_j})}}, u_{k_j}\right\rangle_{L^2(X^{(m_{k_j})},\:\mu^{(m_{k_j})})}=\left\langle cw,u\right\rangle_{L^2(X,\mu)},\]
formulas (\ref{E:diffusionterm}), (\ref{E:thirdterm}) and (\ref{E:secondterm}) imply 
\[\lim_j \mathcal{Q}^{(m_{k_j})}(w|_{X^{(m_{k_j})}}, u_{k_j})=\mathcal{Q}(w,u),\]
what shows condition (ii) in Definition \ref{D:generalized convergence}.
\end{proof}

Theorems \ref{T:approxsame} and \ref{T:KS_spectral} together allow an approximation result involving more general coefficients.  

\begin{theorem}\label{T:general}
Let $a\in\mathcal{F}$ be such that (\ref{E:elliptic}) holds with $0<\lambda<\Lambda$. Let $b,\hat{b}\in\mathcal{H}$ and let $c\in C(X)$. Then we can find $a_n^{(m)}\in\mathcal{F}^{(m)}$ and $b_n^{(m)}, \hat{b}_n^{(m)}\in\mathcal{H}^{(m)}$ such that for any $n$ and $m$ the forms
\begin{equation}\label{E:Qgeneral}
\begin{split}
\mathcal{Q}^{(n,m)}(f,g)&=\left\langle a_n|_{X^{(m)}}\cdot \partial f, \partial g\right\rangle_{\mathcal{H}^{(m)}}-\big\langle g\cdot b_n^{(m)},\partial f\big\rangle_{\mathcal{H}^{(m)}} \\
&-\big\langle f\cdot \hat{b}_n^{(m)}, \partial g\big\rangle_{\mathcal{H}^{(m)}}-\left\langle c|_{X^{(m)}}f,g\right\rangle_{L^2(X^{(m)},\mu^{(m)})}, \quad f,g\in \mathcal{F}^{(m)}
\end{split}
\end{equation}
are sectorial closed forms on $L^2(X^{(m)},\mu^{(m)})$, respectively. Moreover, writing $(\mathcal{L}^{\mathcal{Q}^{(n,m)}},\mathcal{D}(\mathcal{L}^{\mathcal{Q}^{(n,m)}}))$ for the generator of the form $(\mathcal{Q}^{(n,m)},\mathcal{D}(\mathcal{Q}^{(n,m)}))$, we can observe the following.
\begin{enumerate}
\item[(i)]  If $f\in L^2(X,\mu)$, $u$ is the unique weak solution to (\ref{E:ellipticeq}) on $X$ and $u^{(m)}_n$ is the unique weak solution to (\ref{E:ellipticeq}) on $X^{(m)}$ with $\mathcal{L}^{\mathcal{Q}^{(n,m)}}$ and $\Phi_m f$ in place of $\mathcal{L}^{\mathcal{Q}}$ and $f$, then there are sequences $(m_k)_k$ and $(n_l)_l$ with $m_k\uparrow +\infty$ and $n_l\uparrow +\infty$ so that 
\[\lim_l\varlimsup_k \big\|\ext_{m_k}H_{m_k}^{(m_k)}u^{(m_k)}_{n_l} -u\big\|_{\sup}=0.\] 
\item[(ii)] If $\mathring{u}\in L^2(X,\mu)$, $u$ is the unique solution to (\ref{E:paraboliceq}) on $X$ and $u_n^{(m)}$ is the unique weak solution to (\ref{E:paraboliceq}) on $X^{(m)}$ with $\mathcal{L}^{\mathcal{Q}^{(n,m)}}$ and $\Phi_m \mathring{u}$ in place of $\mathcal{L}^{\mathcal{Q}}$ and $\mathring{u}$, then for any $t>0$ there are sequences $(m_k)_k$ and $(n_l)_l$ with $m_k\uparrow +\infty$ and $n_l\uparrow +\infty$ so that 
\[\lim_l\varlimsup_k \big\|\ext_{m_k}H_{m_k}^{(m_k)}u^{(m_k)}_{n_l}(t) -u(t)\big\|_{\sup}=0.\] 
\end{enumerate}
\end{theorem}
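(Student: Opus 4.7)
The plan is to perform a two-step approximation, first approximating the general coefficients $a$, $b$, $\hat{b}$ by nicer ones of the type covered by Theorem \ref{T:KS_spectral}, and then diagonalizing.

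First, I would choose the approximating coefficients. For each $n$, set $a_n := H_n a \in H_n(X)$. By (\ref{E:approxbyPH}) and (\ref{E:resistanceest}) we have $\lim_n \left\|a_n - a\right\|_{\sup} = 0$, and after restricting attention to $n$ sufficiently large, the bounds (\ref{E:triviala}) hold for $a_n$ with constants arbitrarily close to $\lambda$ and $\Lambda$. Next, since $H_\ast(X)$ is dense in $\mathcal{F}$ and $\Omega_a^1(X)$ is dense in $\mathcal{H}$, we can choose $b_n = \sum_i g_{i,n} \cdot \partial f_{i,n}$ and $\hat{b}_n = \sum_i \hat{g}_{i,n} \cdot \partial \hat{f}_{i,n}$ with $f_{i,n}, g_{i,n}, \hat{f}_{i,n}, \hat{g}_{i,n} \in H_n(X)$ such that $\lim_n \|b_n - b\|_\mathcal{H} = 0$ and $\lim_n \|\hat{b}_n - \hat{b}\|_\mathcal{H} = 0$. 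Define $b_n^{(m)}, \hat{b}_n^{(m)} \in \mathcal{H}^{(m)}$ as in (\ref{E:equivclassm}).

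Second, I would check that (\ref{E:Qgeneral}) defines sectorial closed forms uniformly in $n$ and $m$. Since $\sup_n \|b_n\|_\mathcal{H} < +\infty$ and by Corollary \ref{C:compatibleb} $\lim_m \|b_n^{(m)}\|_{\mathcal{H}^{(m)}} = \|b_n\|_\mathcal{H}$ for each $n$, a sup taken over $n$ and sufficiently large $m$ (depending on $n$) of $\|b_n^{(m)}\|_{\mathcal{H}^{(m)}}$ is finite, and similarly for $\hat{b}_n^{(m)}$. Then Corollary \ref{C:KSuniformHardy} produces uniform Hardy constants $\delta = 1/M$ and $\gamma_M$, and by choosing $M$ large enough, (\ref{E:smallfields}) holds uniformly. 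Using Remark \ref{R:more_general_c_possible} to shift $c$ if necessary, we obtain (\ref{E:cisnice}) and hence sectoriality of each $(\mathcal{Q}^{(n,m)}, \mathcal{F}^{(m)})$ with a sector constant $K$ independent of $n$ and $m$.

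Third, I would apply Theorem \ref{T:KS_spectral} for each fixed $n$: letting $u_n$ denote the unique solution on $X$ associated with coefficients $a_n, b_n, \hat{b}_n, c$ (elliptic with source $f$, or parabolic with initial data $\mathring{u}$), Theorem \ref{T:KS_spectral} yields a subsequence $(m_{k(n)})_k$ along which $\ext_{m_{k(n)}} H_{m_{k(n)}}^{(m_{k(n)})} u_n^{(m_{k(n)})} \to u_n$ uniformly on $X$. Then I would apply Theorem \ref{T:approxsame} on the single space $X$ with the convergent coefficients $a_n \to a$ in $\sup$-norm and $b_n \to b$, $\hat{b}_n \to \hat{b}$ in $\mathcal{H}$, obtaining a sequence $(n_l)_l$ along which $u_{n_l} \to u$ uniformly on $X$.

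Finally, I would extract a diagonal sequence. For each $l$ choose $k(l)$ so large that $\|\ext_{m_{k(l)}} H_{m_{k(l)}}^{(m_{k(l)})} u_{n_l}^{(m_{k(l)})} - u_{n_l}\|_{\sup} < 1/l$. Writing $m_k := m_{k(n_l)}$ with appropriate reindexing, the triangle inequality gives
\[\bigl\|\ext_{m_k} H_{m_k}^{(m_k)} u_{n_l}^{(m_k)} - u\bigr\|_{\sup} \leq \bigl\|\ext_{m_k} H_{m_k}^{(m_k)} u_{n_l}^{(m_k)} - u_{n_l}\bigr\|_{\sup} + \|u_{n_l} - u\|_{\sup},\]
and letting $k \to \infty$ first (so the first summand vanishes) and then $l \to \infty$ yields the claim. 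The main obstacle will be ensuring the sectoriality constants and Hardy bounds are genuinely uniform in $n$, which requires a careful combination of Corollaries \ref{C:KSuniformHardy} and \ref{C:compatibleb}; once this is in place the rest is bookkeeping of the two nested limits.
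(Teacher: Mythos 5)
Your proposal is correct and follows essentially the same route as the paper: approximate $a$ uniformly by elements of $H_\ast(X)$ and $b,\hat b$ in $\mathcal{H}$ by finite sums $\sum_i g_{n,i}\cdot\partial f_{n,i}$ with harmonic components (the paper's Lemma \ref{L:help}), restrict to get $b_n^{(m)},\hat b_n^{(m)}$ as in (\ref{E:equivclassm}), secure uniform Hardy and sector constants via Corollaries \ref{C:KSuniformHardy} and \ref{C:compatibleb} and the shift of $c$, then combine Theorem \ref{T:KS_spectral} (for each fixed $n$) with Theorem \ref{T:approxsame} (for $n\to\infty$) and a triangle inequality. The only cosmetic difference is in the final bookkeeping: to get the stated $\lim_l\varlimsup_k$ along a single sequence $(m_k)_k$ you need the nested thinning the paper performs (so that $\|\ext_{m_k}H_{m_k}^{(m_k)}u_j^{(m_k)}-u_j\|_{\sup}<2^{-n}$ for all $j\le n$ once $k>k_n$), rather than picking one $k(l)$ per $l$, but this is the standard diagonal refinement of exactly the argument you describe.
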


\begin{remark}
By \cite[Corollary 1.16]{Attouch} we can find a sequence $(l_k)_k$ with $l_k\uparrow +\infty$ such that 
\[\varlimsup_k \big\|\ext_{m_k}H_{m_k}^{(m_k)}u^{(m_k)}_{n_{l_k}} -u\big\|_{\sup}=0\] 
in the situation of Theorem \ref{T:general} (i) and similarly for (ii).
\end{remark}

The following is a straightforward consequence of the density of $H_\ast(X)$ in $\mathcal{F}$, we omit its short proof.
\begin{lemma}\label{L:help}
The space of finite linear combinations $\sum_i g_i\partial f_i$ with $g_i,f_i\in H_\ast(X)$ is dense in $\mathcal{H}$.
\end{lemma}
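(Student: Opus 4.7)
By construction of the space $\mathcal{H}$, the set of finite linear combinations of elements of the form $g\cdot \partial f$ with $g\in C(X)$ and $f\in \mathcal{F}$ is dense in $\mathcal{H}$ (recall that $X$ is compact, so $C_c(X)=C(X)$ and $\mathcal{F}\cap C_c(X)=\mathcal{F}$). Therefore, it suffices to show that every individual element of the form $g\cdot \partial f$, with $g\in C(X)$ and $f\in \mathcal{F}$, lies in the $\mathcal{H}$-closure of $\{g'\cdot \partial f': g',f'\in H_\ast(X)\}$.

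The plan is to carry out two successive approximations. First, I would approximate $f\in\mathcal{F}$ in energy seminorm by its harmonic projections: by (\ref{E:approxbyPH}) we have $\lim_m\mathcal{E}(f-H_m f)=0$, so $\|\partial f-\partial H_m f\|_{\mathcal{H}}^2=\mathcal{E}(f-H_m f)\to 0$. Using (\ref{E:boundedaction}) we obtain
\[\|g\cdot \partial f-g\cdot \partial H_m f\|_{\mathcal{H}}\leq \|g\|_{\sup}\:\mathcal{E}(f-H_m f)^{1/2}\to 0,\]
so for any $\varepsilon>0$ we can pick $m$ so that the above is less than $\varepsilon/2$, with $H_m f\in H_\ast(X)$.

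Second, since $H_\ast(X)$ is dense in $C(X)$ with respect to $\|\cdot\|_{\sup}$ (as recorded in Subsection \ref{SS:setup}), I would choose $g_n\in H_\ast(X)$ with $\|g-g_n\|_{\sup}\to 0$. Applying (\ref{E:boundedaction}) once more,
\[\|(g-g_n)\cdot \partial H_m f\|_{\mathcal{H}}\leq \|g-g_n\|_{\sup}\:\mathcal{E}(H_m f)^{1/2},\]
which tends to zero as $n\to\infty$ for the fixed $m$ chosen above, so we can make this smaller than $\varepsilon/2$. Combining via the triangle inequality yields $\|g\cdot \partial f-g_n\cdot \partial H_m f\|_{\mathcal{H}}<\varepsilon$, which finishes the argument.

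There is no real obstacle here: the proof is a standard two-step density argument, and both density statements ($H_\ast(X)$ dense in $\mathcal{F}$ energetically and in $C(X)$ uniformly) together with the continuity of the module action (\ref{E:boundedaction}) are already available.
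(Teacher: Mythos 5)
Your argument is correct and is precisely the short proof the paper omits: the paper flags the lemma as a straightforward consequence of the density of $H_\ast(X)$ in $\mathcal{F}$ (energetically) and in $C(X)$ (uniformly), and your two-step approximation of $f$ by $H_mf$ and of $g$ by harmonic functions, combined via the contractivity bound (\ref{E:boundedaction}), is exactly that argument. No gaps.
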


We prove Theorem \ref{T:general}.

\begin{proof}
Given $a\in\mathcal{F}$, let $\left(a_n\right)_n\subset H_\ast(X)$ be a sequence approximating $a$ uniformly on $X$ and such that all $a_n$ satisfy \eqref{E:elliptic} with the same constants $0<\lambda<\Lambda$ as $a$.  Let $M>0$ be large enough such that $\lambda_0 := \lambda/2- 1/M>0$. By Lemma \ref{L:help} there exist
\[ b_n := \sum_i g_{n,i} \partial f_{n,i} \qquad\text{and} \qquad \hat{b}_n := \sum_i \hat{g}_{n,i} \partial \hat{f}_{n,i}\]
with $f_{n,i},\hat{f}_{n,i}, g_{n,i},\hat{g}_{n,i} \in  H_\ast(X)$ that approximate $b$ and $\hat{b}$ in $\mathcal{H}$, respectively. For each $n$ we can proceed as in (\ref{E:equivclassm}) and consider the elements
\[ b_n^{(m)}:=\sum_i g_{n,i}|_{X^{(m)}}\cdot \partial^{(m)}(f_{n,i}|_{X^{(m)}})\quad \text{and}\quad \hat{b}_n^{(m)}:=\sum_i \hat{g}_{n,i}|_{X^{(m)}}\cdot \partial^{(m)}(\hat{f}_{n,i}|_{X^{(m)}})\]
of $\mathcal{H}^{(m)}$. With $\gamma_M$ and $\hat{\gamma}_M$ as in \eqref{E:KSgammam} and assuming that, without loss of generality, $c\in C(X)$ satisfies \eqref{E:conseqforfieldssup}, we can conclude that for each $n$ and each sufficiently large $m$ the forms $(\mathcal{Q}^{(n,m)},\mathcal{D}(\mathcal{Q}^{(n,m)}))$ as in \eqref{E:Qgeneral} with $\mathcal{D}(\mathcal{Q}^{(n,m)})=\mathcal{F}^{(m)}$ are closed forms in $L^2(X^{(m)}, \mu^{(m)})$.

To prove (i), suppose that $f\in L^2(X,\mu)$ and $u$ is the unique weak solution to (\ref{E:ellipticeq}) on $X$.
Let $u^{(m)}_n$ be the unique weak solution to (\ref{E:ellipticeq}) on $X^{(m)}$ with $\mathcal{L}^{\mathcal{Q}^{(n,m)}}$ and $\Phi_m(f)$ in place of $\mathcal{L}^{\mathcal{Q}}$ and $f$. By Theorem \ref{T:KS_spectral} we can find a sequence $(m_k)_k$ with $m_k\uparrow +\infty$ so that $\lim_{k\rightarrow\infty} \ext_{m_k}H_{m_k}^{(m_k)}u^{(m_k)}_1 =u_1$ 
uniformly on $X$. Repeated applications of Theorem \ref{T:KS_spectral} allow to thin out $(m_k)_k$ further so that for any $n$ we have   
\[\big\|\ext_{m_k}H_{m_k}^{(m_k)}u^{(m_k)}_j-u_j\big\|_{\sup}<2^{-n}, \quad j\leq n,\]
provided that $k$ is greater than some integer $k_n$ depending on $n$. On the other hand Theorem \ref{T:approxsame} allows to find a sequence $(n_l)_l$ with $n_l\uparrow +\infty$ such that $\lim_{l\rightarrow\infty} u_{n_l}=u$ uniformly on $X$, and combining these facts, we obtain (i). Statement (ii) is proved in the same manner.
\end{proof}

\section{Discrete and metric graph approximations}\label{S:examples}

\subsection{Discrete approximations}\label{SS:discretegraphs}

We describe approximations in terms of discrete Dirichlet forms, our notation follows that of Subsection \ref{SS:setup}. Let $(\mathcal{E},\mathcal{F})$ be a local regular resistance form on the compact space $(X,R)$, obtained under Assumption \ref{A:basic} as in Section \ref{SS:setup}, and suppose that also Assumption \ref{A:gotozero} is satisfied. Let $X^{(m)}=V_m$, $\mathcal{E}^{(m)}=\mathcal{E}_{V_m}$ and $\mathcal{F}^{(m)}=\ell(V_m)$ be the discrete energy forms on the finite subsets $V_m$ as in (\ref{E:limitform}). Clearly Assumption \ref{A:convergence} is satisfied, 
note that for every $u \in H_m(X)$ we have $\E_{V_m}( u|_{V_m}) = \E(u)$ and that (\ref{E:weakconvenergy}) is immediate from (\ref{E:energymeasureapprox}). Since every element of $\ell(V_m)$ is the pointwise restriction of a function in $H_m(X)$, the operator $H_m^{(m)}$ is the identity operator $\id_{\mathcal{F}^{(m)}}$, so that Assumption \ref{A:decay} is trivially satisfied, as pointed out in Remark \ref{R:trivial}. 

Now let $\mu$ be a finite Borel measure on $X$ such that for any $m$ the value $V(m):=\inf_{\alpha\in\mathcal{A}_m}\mu(X_\alpha)$ is strictly positive. Following \cite{PostSimmer17} we define, for each $m$, a measure $\mu^{(m)}$ on $V_m$ by
\[\mu^{(m)}(\{p\}):=\int_X \psi_{p,m}(x)d\mu(x), \quad p\in V_m,\]
where $\psi_{p,m}\in H_m(X)$ is the (unique) harmonic extension to $X$ of the function $\mathbf{1}_{\{p\}}$ on $V_m$. Since $X_\alpha^{(m)}=V_\alpha$ and $\sum_{p\in V_\alpha}\psi_{p,m}(x)=1$ for all $m$, $\alpha\in\mathcal{A}_m$ and $x\in X_\alpha$, we have 
\[\mu^{(m)}(X_\alpha^{(m)})=\sum_{p\in V_\alpha}\mu^{(m)}(\{p\})=\int_X \sum_{p\in V_\alpha}\psi_{p,m}(x)\mu(dx)\geq \mu(X_\alpha)\geq V(m)\]
for all $m$ and $\alpha\in\mathcal{A}_m$, so that Assumption \ref{A:connection} (i) is seen to be satisfied. 

For each $m$ let $\Phi_m$ be a linear operator $\Phi_m:L^2(X,\mu)\to \ell^2(V_m, \mu^{(m)})$ defined by 
\[\Phi_m f(p):= \frac{1}{\mu^{(m)}(\{p\})}\left\langle f,\psi_{p,m}\right\rangle_{L^2(X,\mu)}, \quad p\in V_m,\quad f\in L^2(X,\mu).\]
In \cite[proof of Theorem 1.1]{PostSimmer17} it was shown that for each $m$ the adjoint $\Phi_m^\ast$ of $\Phi_m$ equals the harmonic extension operator $\ext_m:\ell^2(V_m, \mu^{(m)})\to H_m(X)$,
\[\ext_m v =\sum_{p\in V_m} v(p)\psi_{p,m},\quad v\in \ell^2(V_m,\mu^{(m)})\]
which satisfies $\Vert \ext_m f \Vert_{L^2(X, \mu)} \leq \Vert f \Vert_{\ell^2(V_m, \mu^{(m)})}$ for all $f \in \ell^2(V_m, \mu^{(m)})$. Consequently \eqref{E:uninormbound} is fulfilled, and also (\ref{E:L2consist}) holds. The function $\psi_{p,m}$ is supported on the union of all $X_\alpha$, $\alpha\in \mathcal{A}_m$, which contain the point $p$. By Assumption \ref{A:gotozero} we therefore have 
\begin{equation}\label{E:supportstozero}
\lim_{m\rightarrow\infty} \sup_{p \in V_m} \diam_R \left( \supp \psi_{p,m}\right) = 0.
\end{equation}
If a sequence $(u_m)_m\subset \mathcal{F}$ is such that $\sup_m \E(u_m)<\infty$ then by (\ref{E:resistanceest}) it is equicontinuous, and combined with (\ref{E:supportstozero}) it follows that given $\varepsilon>0$ we have 
\[\sup_{p\in V_m}\sup_{x\in\psi_{p,m}}|u_m(p)-u_m(x)|<\varepsilon\]
whenever $m$ is large enough, and consequently 
\[\Vert \Phi_m u_m -  u_m|_{V_m} \Vert_{\ell^2(V_m, \mu^{(m)})}^2 \leq \sum_{p \in V_m} \frac{1}{\mu^{(m)}(\{p\})} \left(\int_X \vert u_m(x) -u_m(p) \vert \psi_{p,m}(x) d \mu(x)\right)^2 <\varepsilon^2\]
for such $m$, note that $\sum_{p\in V_m} \psi_{p,m}(x)=1$ for all $m$ and $x\in X$. This shows \eqref{E:connection}.
For every $u \in \mathcal{F}$ it follows that
\[\lim_{m \rightarrow \infty} \Vert u|_{V_m} \Vert^2_{\ell^2(V_m, \mu^{(m)})} = \lim_{m\rightarrow\infty}\sum_{p \in V_m}\int_X \left[(u(p) - u(x))(u(p)+u(x)) +  u^2(x)\right]\psi_{p,m}(x) d \mu(x) = \Vert u \Vert^2_{L^2(X,\mu)},\]
since $u$ is bounded and $\lim_m \sum_{p \in V_m}\int_X (u(p) - u(x)) \psi_{p,m}(x) d \mu(x)=0$ by (\ref{E:supportstozero}) as above, proving \eqref{E:KSconvHilbert}. To verify the remaining condition \eqref{E:adjoint} note that for $u \in H_n(X)$ we have
\[\Vert \Phi^\ast_m \Phi_m u - u \Vert_{L^2(X,\mu)} \leq \Vert \Phi_m^\ast\Vert_{\ell^2(V_m, \mu^{(m)}) \rightarrow L^2(X, \mu)} \Vert \Phi_m u -  u|_{V_m} \Vert_{\ell^2(V_m, \mu^{(m)})} + \Vert \Phi_m^\ast (u|_{V_m} ) - u \Vert_{L^2(X,\mu)}, \]
and since $\Phi_m^\ast (u|_{V_m}) = H_m u$ the last summand is bounded by $\diam_R(X)^{1/2}\E(H_m u - u)^{1/2} \mu(X)^{1/2}$. Using (\ref{E:approxbyPH}), \eqref{E:uninormbound} and \eqref{E:connection} condition \eqref{E:adjoint} now follows.

\begin{examples}\label{Ex:pcf}
It is well known that p.c.f. self-similar structures form a subclass of finitely ramified sets. Because of its importance, and since we will discuss metric graph approximations for this subclass in the next section, we provide some details. Let $(K,S,\lbrace F_j\rbrace_{j \in S})$ be a connected \emph{post-critically finite (p.c.f.) self-similar structure}, see \cite[Definitions 1.3.1, 1.3.4 and 1.3.13]{Ki01}. The set of finite words $w=w_1w_2...w_m$ of length $|w|=m$ over the alphabet $S$ is denoted by $W_m:=S^m$, and we write $W_\ast=\bigcup_{m\geq 0} W_m$. Given a word $w\in W_m$ we write $F_w=F_{w_1}\circ F_{w_2}\circ ...\circ F_{w_m}$ and use the abbreviations $K_w:=F_w(K)$ and $V_w :=F_w(V_0)$. Then $(K,\{K_w\}_{w\in W_\ast}, \{V_w\}_{w\in W_\ast})$ is a finitely ramified cell structure in the sense of Definition \ref{D:finitelyramified}. We consider the discrete sets $V_m:=\cup_{|w|=m} V_w$, $m\geq 0$, and assume that $((\mathcal{E}_{V_m},\ell(V_m)))_m$ is a sequence of Dirichlet forms associated with a \emph{regular harmonic structure} on $K$, \cite[Definitions 3.1.1 and 3.1.2]{Ki01}, that is, there exist  constants $r_j \in (0,1)$, $j\in S$, a Dirichlet form $\mathcal{E}_{V_0}(u)=\frac12\sum_{p\in V_0}\sum_{q\in V_0}c(0;p,q)(u(p)-u(q))^2$ on $\ell(V_0)$, for all $m\geq 1$ we have 
\begin{equation}\label{E:simp_reg_structure}
\mathcal{E}_{V_m}(u,v)= \sum_{w\in W_m}r_w^{-1}\mathcal{E}_0(u\circ F_w, v\circ F_w),\quad u,v\in \ell(V_m),
\end{equation}
where $r_w:=r_{w_1}\dots r_{w_m}$ for $w=w_1...w_m$, and $(\mathcal{E}_{V_{m+1}})_{V_m}=\mathcal{E}_{V_m}$ for all $m\geq 0$. The regularity of the harmonic structure implies in particular that $\Omega = K$, \cite[Theorem 3.3.4]{Ki01}, and the limit (\ref{E:limitform}) defines a (self-similar) local regular resistance form $(\mathcal{E},\mathcal{F})$ on $K$. Assumptions \ref{A:basic} and \ref{A:gotozero} are clear from general theory, \cite{Ki01}.
\end{examples}

\begin{examples}
Further examples which fit into the above scheme are for instance non-self-similar resistance forms on Sierpinski gaskets associated with regular harmonic structures, \cite{Meyers}, certain energy forms on random Sierpinski gaskets, \cite{Hambly92, Hambly97}, finitely ramified graph-directed sets with a regular harmonic structure, \cite[Section 4, in particular p. 18]{HamblyNyberg03}, or basilica Julia sets with a regular harmonic structure, \cite[Theorem 3.9]{RogersTeplyaev10}. 
\end{examples}

\subsection{Metric graph approximations}\label{SS:metricgraphs}

We describe approximations in terms of local Dirichlet forms on metric graphs (also called 'cable-systems' in \cite{BBK06}). We follow the method in \cite{HinzMeinert19+} and therefore specify to the case where $X$ is a post-critically finite self-similar set $K$. Let the setup and notation be as in Examples \ref{Ex:pcf}.

For each $m\geq 0$ we consider $V_m$ as the vertex set of a finite simple (unoriented) graph $G_m=(V_m,E_m)$ with two vertices $p,q\in V_m$ being the endpoints of the same edge $e\in E_m$ if there is a word $w$ of length $|w|=m$ such that $F_w^{-1}p, F_w^{-1}q\in V_0$ and $c(0;F_w^{-1},F_w^{-1}q)>0$. For each $m$ and $e\in E_m$ let $l_e$ be a positive number and identify the edge $e$ with an oriented copy of the interval $(0,l_e)$ of length $l_e$, we write $i(e)$ and $j(e)$ for the initial and the terminal vertex of  $e$, respectively. This yields a sequence $(\Gamma_m)_{m\geq 0}$ of metric graphs $\Gamma_m$, and for each $m$ the set $X_{\Gamma_{m}}=V_m\cup \bigcup_{e\in E_m} e$, endowed with the natural length metric, becomes a compact metric space See \cite{HinzMeinert19+} for details and further references. By construction we have $X_{\Gamma_m}\subset X_{\Gamma_{m+1}}$ and $X_{\Gamma_m}\subset K$ for each $m$.

On the space $X_{\Gamma_m}$ we consider the bilinear form $(\mathcal{E}_{\Gamma_m}, \dot{W}^{1,2}(X_{\Gamma_m}))$, where
\[\mathcal{E}_{\Gamma_m}(f):=\sum_{w\in W_m}r_w^{-1}\sum_{e\in E_m,\, e\subset K_w} l_e\mathcal{E}_e(f_e)\quad \text{and}\quad \mathcal{E}_e(f_e)=\int_0^{l_e} (f_e'(t))^2 dt\]
and 
\[\dot{W}^{1,2}(X_{\Gamma_m})=\{f=(f_e)_{e\in E_m}\in C(X_{\Gamma_m}): f_e\in  \dot{W}^{1,2}(e), \, \mathcal{E}_{\Gamma_m}(f)<+\infty \}.  \]
Here $f_e$ is the restriction of $f$ to $e\in E_m$ and $\dot{W}^{1,2}(e)$ is the homogeneous Sobolev space consisting of locally Lebesgue integrable functions $g$ on the edge $e$ such that 
\[\mathcal{E}_e(g):=\int_0^{l_e} (g'(s))^2\:ds<+\infty,\]
where the derivative $g'$ of $g$ is understood in the distributional sense. Each form $\mathcal{E}_e$, $e\in E_m$, satisfies 
\begin{equation}\label{E:resistonedge}
(f_e(s)-f_e(s'))^2\leq l_e\mathcal{E}_{e}(f_e)
\end{equation}
for any $f\in \dot{W}^{1,2}(X_{\Gamma_m})$ and any $s,s'\in e$. See \cite{HinzMeinert19+} for further details. We approximate $K$, endowed with $(\mathcal{E},\mathcal{F})$ as in Examples \ref{Ex:pcf}, by the spaces $X^{(m)}=X_{\Gamma_m}$ carrying the resistance forms $\mathcal{E}^{(m)}=\mathcal{E}_{\Gamma_m}$ with domains $\mathcal{F}^{(m)}=\dot{W}^{1,2}(X_{\Gamma_m})$.

To a function $f\in \dot{W}^{1,2}(X_{\Gamma_m})$ which is linear on each edge $e\in E_m$ we refer as \emph{edge-wise linear} function, and we denote the closed linear subspace of $\dot{W}^{1,2}(X_{\Gamma_m})$ of
such functions by $EL_m$. If $f\in EL_m$, then its derivative on a fixed edge $e$ is the constant function
$f_e'=l_e^{-1}(f(j(e))-f(i(e)))$, so that 
\begin{equation}\label{E:harmoniccase}
\mathcal{E}_e(f_e)=\int_0^{l_e}(f'_e(t))^2dt=\frac{1}{l_e}(f(j(e))-f(i(e)))^2
\end{equation}
on each $e\in E_m$. For a general function $f\in \dot{W}^{1,2}(X_{\Gamma_m})$ formula (\ref{E:harmoniccase}) becomes an inequality in which the left hand side dominates the right hand side. Given a function $g\in \ell(V_m)$ it has a unique extension $h$ to $X_{\Gamma_m}$ which is edge-wise linear, $h\in EL_m$. In particular, if $f\in H_m(K)$ is an $m$-piecewise harmonic function on the p.c.f. self-similar set $K$ then its pointwise restriction $f|_{X_{\Gamma_m}}$ to $X_{\Gamma_m}$ is a member of $EL_m$, and $\mathcal{E}_{\Gamma_m}(f|_{X_{\Gamma_m}})=\mathcal{E}(f)$. Since any such $f\in H_m(K)$ is uniquely determined by its values on $V_m\subset X_{\Gamma_m}$, this restriction map is injective, and Assumption \ref{A:convergence} (i) is seen to be satisfied. Assumption \ref{A:convergence} (ii) is verified in the following lemma. By $\nu^{(m)}_f$ we denote the energy measures associated with the form $(\mathcal{E}_{\Gamma_m}, \dot{W}^{1,2}(X_{\Gamma_m}))$.

\begin{lemma}
For any $f \in \mathcal{F}$ we have $\nu_f=\lim_{m\to\infty} \nu^{(m)}_{H_m(f)|_{X_{\Gamma_m}}}$ weakly on $K$.
\end{lemma}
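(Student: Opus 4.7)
The plan is to verify, for every $h \in C(K)$, the convergence
\[
\int_K h \,d\nu^{(m)}_{H_m(f)|_{X_{\Gamma_m}}} \;\longrightarrow\; \int_K h\,d\nu_f,
\]
first for piecewise harmonic $f$ and then by a continuity argument in energy norm.

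I would first reduce to the case $f\in H_n(K)$. For arbitrary $u,v\in\mathcal{F}^{(m)}$ we have the polarization identity $\nu^{(m)}_u - \nu^{(m)}_v = \nu^{(m)}_{u-v,u+v}$ together with the Cauchy--Schwarz inequality for (mutual) energy measures, yielding
\[
\bigl|\nu^{(m)}_u(h) - \nu^{(m)}_v(h)\bigr| \leq \|h\|_{\sup}\,\mathcal{E}^{(m)}(u-v)^{1/2}\mathcal{E}^{(m)}(u+v)^{1/2},
\]
and analogously for $\nu_u-\nu_v$ on $K$. Applying this with $u=H_m(f)|_{X_{\Gamma_m}}$ and $v=H_n(f)|_{X_{\Gamma_m}}$ for $m\geq n$, the identity $\mathcal{E}^{(m)}(w|_{X_{\Gamma_m}})=\mathcal{E}(w)$ for $w\in H_m(K)$ (Assumption~\ref{A:convergence}(i)) together with the orthogonality $\mathcal{E}(H_m f-H_n f)=\mathcal{E}(H_m f)-\mathcal{E}(H_n f)\leq \mathcal{E}(f-H_n f)$ shows that the first factor is bounded by $\mathcal{E}(f-H_n f)^{1/2}$, independently of $m$, while the second is bounded by $2\mathcal{E}(f)^{1/2}$. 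Combined with the analogous bound $|\nu_f(h)-\nu_{H_n f}(h)|\leq 2\|h\|_{\sup}\mathcal{E}(f-H_n f)^{1/2}\mathcal{E}(f)^{1/2}$ and (\ref{E:approxbyPH}), a standard three--$\varepsilon$ argument reduces the problem to showing, for each $f\in H_n(K)$,
\[
\lim_{m\to\infty}\int_K h\,d\nu^{(m)}_{f|_{X_{\Gamma_m}}} = \int_K h\,d\nu_f, \qquad h\in C(K).
\]

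For such $f$ and $m\geq n$, the function $f|_{X_{\Gamma_m}}$ lies in $EL_m$; on each edge $e\in E_m$ with $e\subset K_w$, $|w|=m$, its derivative is constant equal to $l_e^{-1}(f(j(e))-f(i(e)))$. The energy measure of the scaled metric graph form is therefore
\[
d\nu^{(m)}_{f|_{X_{\Gamma_m}}} = \sum_{w\in W_m}\sum_{e\in E_m,\,e\subset K_w} \frac{r_w^{-1}}{l_e}\,(f(j(e))-f(i(e)))^2\,\mathbf{1}_e\,ds_e,
\]
viewed as a Borel measure on $K$ supported in $X_{\Gamma_m}$. Integrating $h\in C(K)$ and using uniform continuity of $h$ together with Assumption~\ref{A:gotozero} (which, via Lemma~\ref{L:metricisgood}, forces the $R$-diameters of all edges $e\subset K_w$, $|w|=m$, to tend to zero uniformly), I approximate $\int_e h\,ds_e$ by $l_e\cdot h(i(e))$ with an error at most $\varepsilon_m l_e$, where $\varepsilon_m\to 0$. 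The resulting error in the integral is controlled by $\varepsilon_m\,\mathcal{E}_{\Gamma_m}(f|_{X_{\Gamma_m}})=\varepsilon_m\,\mathcal{E}(f)$, which vanishes in the limit. Hence
\[
\lim_{m\to\infty}\int_K h\,d\nu^{(m)}_{f|_{X_{\Gamma_m}}} = \lim_{m\to\infty}\sum_{w\in W_m}\sum_{e\subset K_w} r_w^{-1}\,h(i(e))(f(j(e))-f(i(e)))^2.
\]

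Finally, identifying the right-hand side with the discrete approximation (\ref{E:energymeasureapprox}) of $\int_K h\,d\nu_f$: re-indexing the double sum in (\ref{E:energymeasureapprox}) over unordered adjacent pairs in $V_m$ (equivalently, over $e\in E_m$), using the form of the coefficients $c(m;p,q)$ implied by (\ref{E:simp_reg_structure}), and exploiting that $|h(i(e))-h(j(e))|\to 0$ uniformly over $e\in E_m$ as $m\to\infty$ (again by Assumption~\ref{A:gotozero} and uniform continuity), the two limits agree. The main technical point will be the passage from the integral of $h$ against $l_e\,(f'_e)^2\,ds_e$ to the discrete Riemann-type sum matching (\ref{E:energymeasureapprox}); the key book-keeping is to show that the $l_e$-scaling built into $\mathcal{E}_{\Gamma_m}$ precisely compensates the metric-graph Lebesgue weight and reproduces the conductances $c(m;p,q)$. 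This yields the stated weak convergence.
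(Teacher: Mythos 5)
Your proposal is correct and follows essentially the same route as the paper: reduce to piecewise harmonic functions via continuity of energy measures in the energy seminorm, compute the metric-graph energy measure edge by edge for an edge-wise linear function, and replace the edge integral of $h$ by $l_e$ times a vertex value using uniform continuity and the vanishing cell diameters, with total error controlled by $\varepsilon_m\,\mathcal{E}(f)$. The only cosmetic differences are that the paper compares with $\nu_{H_m f}$ (the harmonic index moving with the space index) and writes $\int_K g\,d\nu_{H_m f}$ directly as the cell-wise discrete sum rather than freezing $H_n f$ and invoking (\ref{E:energymeasureapprox}), and the conductance book-keeping you defer is exactly the identity $\mathcal{E}_{\Gamma_m}(u|_{X_{\Gamma_m}})=\mathcal{E}(u)$ for $u\in H_m(K)$ localized to cells, which the paper's setup already supplies.
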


\begin{proof}
For $f\in\mathcal{F}$ and nonnegative $g\in C(K)$ we have 
\[ \left\vert \left( \int_K g d \nu_f\right)^\frac{1}{2} - \left(\int_K g d \nu_{H_m(f)}\right)^\frac{1}{2} \right\vert \leq  \Vert g \Vert_{\sup} \E(f-H_m(f)),\]
see \cite[Section 3.2]{FOT94}. This implies the relation $\int_K g\:d\nu_f=\lim_m \int_K g\:d\nu_{H_m(f)}$, which by the standard decomposition $g=g^+-g^-$ remains true for arbitrary $g\in C(K)$. For any $m$ we have 
\[\int_K g\:d\nu_{H_m(f)}=\sum_{w\in W_m}r_w^{-1}\sum_{e\in E_m, e\subset K_w} l_e^2(H_m(f)_e')^2g_e(i(e))\]
by (\ref{E:approxbydiscreteforms}), here $H_m(f)_e'\in \mathbb{R}$ denotes the slope of the restriction $H_m(f)_e$ of $H_m(f)$ to $e$. On the other hand,
\[\int_K g\:d\nu_{H_m(f)|_{X_{\Gamma_m}}}^{(m)}=\sum_{w\in W_m}r_w^{-1}\sum_{e\in E_m, e\subset K_w} l_e(H_m(f)_e')^2\int_0^{l_e}g_e(t)dt,\]
and given $\varepsilon>0$ we have $\sup_{e \in E_{m}} \sup_{s,t \in e} \vert g(s) -g(t) \vert <\varepsilon$ whenever $m$ is large enough, and in this case,
\[\left|\int_K g\:d\nu_{H_m(f)}-\int_K g\:d\nu_{H_m(f)|_{X_{\Gamma_m}}}^{(m)}\right|\leq \varepsilon\: \sum_{w\in W_m}r_w^{-1}\sum_{e\in E_m, e\subset K_w} l_e^2(H_m(f)_e')^2=\varepsilon\:\mathcal{E}_{\Gamma_m}(H_m(f)|_{X_{\Gamma_m}})\leq \varepsilon\:\mathcal{E}(f).\]
Combining, it follows that $\lim_m \int_K g\:d\nu_f=\lim_m\int_K g\:d\nu_{H_m(f)|_{X_{\Gamma_m}}}$.
\end{proof}

We verify condition \eqref{E:bumpdecay} in Assumption \ref{A:decay} in the present setup. It states that the small oscillations on the interior of individual edges in $X_{\Gamma_m}$ subside uniformly for sequences of functions with a uniform energy bound. 

\begin{lemma}\label{L:decay1}
Let $(f_m)_m$ be a sequence of functions $f_m \in \dot{W}^{1,2}(X_{\Gamma_m})$ such that $\sup_m \E_{\Gamma_m}(f_m)<+ \infty$ and   $f_m|_{V_m} = 0$ for all $m$. Then $\lim_m \Vert f_m \Vert_{\sup, X_{\Gamma_m}}=0$.
\end{lemma}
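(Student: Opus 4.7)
The plan is to reduce the supremum norm on $X_{\Gamma_m}$ to a per-edge bound using the one-dimensional resistance estimate \eqref{E:resistonedge}, and then to exploit the scaling factor $r_w^{-1}$ in the definition of $\mathcal{E}_{\Gamma_m}$ together with the fact that $r_j < 1$ for all $j \in S$ in a regular harmonic structure.

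First, I would fix an arbitrary edge $e \in E_m$. Because $i(e), j(e) \in V_m$ and $f_m$ vanishes on $V_m$ by hypothesis, applying \eqref{E:resistonedge} with $s' = i(e)$ yields
\[
|f_m(s)|^2 \leq l_e\, \mathcal{E}_e(f_{m,e}), \quad s \in e.
\]
Next, choose a word $w \in W_m$ with $e \subset K_w$. Since all summands in the formula
\[
\mathcal{E}_{\Gamma_m}(f_m) = \sum_{w' \in W_m} r_{w'}^{-1} \sum_{e' \in E_m,\, e' \subset K_{w'}} l_{e'} \mathcal{E}_{e'}(f_{m,e'})
\]
are non-negative, we can drop everything but the single term indexed by $(w,e)$ to obtain
\[
l_e\, \mathcal{E}_e(f_{m,e}) \leq r_w\, \mathcal{E}_{\Gamma_m}(f_m).
\]
Setting $r_\ast := \max_{j \in S} r_j$, which lies in $(0,1)$ by the regularity of the harmonic structure, we have $r_w \leq r_\ast^{\,m}$ for every $w$ of length $m$. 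Combining the two displays, taking the sup over $s \in e$ and then over $e \in E_m$, and recalling that $f_m$ vanishes on $V_m$, we arrive at
\[
\|f_m\|_{\sup, X_{\Gamma_m}}^2 \leq r_\ast^{\,m}\, \sup_k \mathcal{E}_{\Gamma_k}(f_k),
\]
which tends to zero as $m \to \infty$ since the supremum is finite by assumption.

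I do not expect any serious obstacle here; the lemma is essentially a dimensional scaling argument. The only point worth verifying carefully is that each edge $e \in E_m$ lies in some $K_w$ with $|w|=m$, which follows directly from the construction of $G_m$ via the maps $F_w$, and that $r_\ast < 1$, which is built into the definition of a regular harmonic structure recalled in Examples \ref{Ex:pcf}.
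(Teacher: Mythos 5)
Your proof is correct and follows essentially the same route as the paper: the per-edge estimate \eqref{E:resistonedge} combined with $f_m|_{V_m}=0$, and then the factor $r_w\leq(\max_{j\in S}r_j)^m$ extracted from the definition of $\mathcal{E}_{\Gamma_m}$. The only cosmetic difference is that you bound the supremum by a maximum over edges where the paper bounds it by a sum over edges; both yield the same decaying bound $(\max_{j\in S}r_j)^m\sup_k\mathcal{E}_{\Gamma_k}(f_k)$.
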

\begin{proof}
By \eqref{E:resistonedge} we have   
\[\sup_{t\in e}\vert \left(f_m\right)_e(t) \vert^2 \leq l_e\:\int_0^{l_e} \left(\left(f_m'\right)_e(t)\right)^2 dt\]
on each $e\in E_m$ and consequently
\begin{align*}
\Vert f_m \Vert^2_{\sup,X_{\Gamma_m}} \leq \sum_{e \in E_m} \sup_{t\in e} \vert \left(f_m\right)_e(t)\vert^2 \leq (\max_{j\in S}r_j)^m \sup_n \E_{\Gamma_n} (f_n) .
\end{align*}
\end{proof}

By $H_{\Gamma_m}$ we denote the orthogonal projection in 
$\dot{W}^{1,2}(X_{\Gamma_m})$ onto $EL_m$. Given $f_m\in \dot{W}^{1,2}(X_{\Gamma_m})$ it clearly follows that $f_m - H_{\Gamma_m}f_m \in \dot{W}^{1,2}(X_{\Gamma_m})$, we have $\left(f_m - H_{\Gamma_m}f_m\right)|_{V_m} = 0$ and $\E_{\Gamma_m} (f_m - H_{\Gamma_m}f_m)\leq  \E_{\Gamma_m}(f_m)$. We verify \eqref{E:bumpdecayproducts} in Assumption \ref{A:decay}.

\begin{lemma}\label{L:decay2}
Given $f,g \in H_n(X)$, we have 
\[\lim_{m\rightarrow \infty} \E_{\Gamma_m} \left( f|_{X_{\Gamma_m}}g|_{X_{\Gamma_m}}-H_{\Gamma_m}\left(f|_{X_{\Gamma_m}}g|_{X_{\Gamma_m}}\right) \right) = 0.\]
\end{lemma}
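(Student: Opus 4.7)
The plan is to compute the energy of $f|_{X_{\Gamma_m}}g|_{X_{\Gamma_m}}-H_{\Gamma_m}(f|_{X_{\Gamma_m}}g|_{X_{\Gamma_m}})$ explicitly edge by edge and then bound the result by a quantity that tends to zero under Assumption \ref{A:gotozero}. Since $H_n(X)\subset H_m(X)$ for any $m\geq n$, the pointwise restrictions $f|_{X_{\Gamma_m}}$ and $g|_{X_{\Gamma_m}}$ both lie in $EL_m$; that is, on every edge $e\in E_m$ parametrized by $t\in[0,l_e]$ they are affine functions of the form $f_e(t)=\alpha_e+\beta_e t/l_e$ and $g_e(t)=\gamma_e+\delta_e t/l_e$ with $\beta_e=f(j(e))-f(i(e))$ and $\delta_e=g(j(e))-g(i(e))$.

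The restriction to $e$ of the product $fg$ is therefore a quadratic polynomial in $t$, and the orthogonal projection $H_{\Gamma_m}$ onto $EL_m$ simply replaces it by its affine interpolation between the endpoint values. A direct computation gives
\[(f|_{X_{\Gamma_m}}g|_{X_{\Gamma_m}}-H_{\Gamma_m}(f|_{X_{\Gamma_m}}g|_{X_{\Gamma_m}}))_e(t)=\beta_e\delta_e\bigl((t/l_e)^2-t/l_e\bigr),\]
whose derivative has $\mathcal{E}_e$-energy equal to $(\beta_e\delta_e)^2/(3l_e)$. Multiplying by $l_e$ and summing against the weights $r_w^{-1}$ yields
\[\mathcal{E}_{\Gamma_m}(f|_{X_{\Gamma_m}}g|_{X_{\Gamma_m}}-H_{\Gamma_m}(f|_{X_{\Gamma_m}}g|_{X_{\Gamma_m}}))=\frac{1}{3}\sum_{w\in W_m}r_w^{-1}\sum_{e\in E_m,\,e\subset K_w}(\beta_e\delta_e)^2.\]

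To estimate the right-hand side, I would use the resistance inequality (\ref{E:resistanceest}) applied to $f$ on $K$: for any edge $e\subset K_w$ the endpoints $i(e),j(e)$ both lie in $V_w\subset K_w$, hence
\[\beta_e^2=(f(j(e))-f(i(e)))^2\leq R(i(e),j(e))\,\mathcal{E}(f)\leq \diam_R(K_w)\,\mathcal{E}(f).\]
Bounding $(\beta_e\delta_e)^2\leq \beta_e^2\,\delta_e^2$ and pulling out the uniform factor $\max_{\alpha\in\mathcal{A}_m}\diam_R(K_\alpha)\,\mathcal{E}(f)$, the remaining sum is
\[\sum_{w\in W_m}r_w^{-1}\sum_{e\in E_m,\,e\subset K_w}\delta_e^2=\mathcal{E}_{\Gamma_m}(g|_{X_{\Gamma_m}})=\mathcal{E}(g),\]
where the first equality uses (\ref{E:harmoniccase}) for the edge-wise linear function $g|_{X_{\Gamma_m}}$ and the second is Assumption \ref{A:convergence} (i). Thus
\[\mathcal{E}_{\Gamma_m}(f|_{X_{\Gamma_m}}g|_{X_{\Gamma_m}}-H_{\Gamma_m}(f|_{X_{\Gamma_m}}g|_{X_{\Gamma_m}}))\leq \frac{\mathcal{E}(f)\,\mathcal{E}(g)}{3}\max_{\alpha\in\mathcal{A}_m}\diam_R(K_\alpha),\]
which tends to zero by Assumption \ref{A:gotozero}. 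No step presents a real obstacle; the only place where care is needed is checking that $f|_{X_{\Gamma_m}},g|_{X_{\Gamma_m}}\in EL_m$ for $m\geq n$ and that the bookkeeping with the scaling factors $r_w^{-1}$ matches the defining formula of $\mathcal{E}_{\Gamma_m}$.
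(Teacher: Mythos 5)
Your proof is correct and follows essentially the same route as the paper: both compute the quadratic correction $\beta_e\delta_e\bigl((t/l_e)^2-t/l_e\bigr)$ on each edge, obtain the exact energy $\tfrac{1}{3}(\beta_e\delta_e)^2/l_e$, and then bound one factor uniformly while summing the other to $\mathcal{E}(g)$. The only (harmless) difference is in the uniform bound on $\beta_e^2$: you use the resistance estimate (\ref{E:resistanceest}) together with Assumption \ref{A:gotozero} to get $\beta_e^2\leq\diam_R(K_w)\,\mathcal{E}(f)$, whereas the paper bounds $\beta_e^2=l_e^2(f_e')^2\leq(\max_i r_i)^m\,\mathcal{E}(f)$ directly from the self-similar weights $r_w^{-1}$, which yields the same conclusion.
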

\begin{proof}
We first note that for any $m\geq n$ the functions $f_e$ and $g_e$ are linear on any fixed $e \in E_m$, 
\begin{align*}
f_e(t) = f_e(0) + f_e'\cdot t \qquad \text{and} \qquad g_e(t) = g_e(0) + g_e'\cdot t, \quad t \in [0,l_e],
\end{align*}
with slopes $f_e'\in \mathbbm{R}$ and $g_e' \in \mathbb{R}$, respectively. Therefore $\E_e(f_e)= l_e \left(f_e'\right)^2$ for each such $e$ and 
\begin{align}\label{E:preparation}
l_e^2\left(f_e'\right)^2 \leq \sum_{\vert w \vert = m } \sum_{e \in E_m, e \in K_w} l_e^2\left(f_e'\right)^2 \leq (\max_i r_i)^m \sup_{m\geq n}\E_{\Gamma_m}\left(f|_{X_{\Gamma_m}}\right) = (\max_i r_i)^m \E(f), 
\end{align}
similarly for the function $g$. Since \[\left(fg\right)_e(t) = f_e(t) g_e(t)= f_e(0)g_e(0) + g_e(0)f_e'\cdot t +  f_e(0)g_e'\cdot t + f_e'g_e'\cdot t^2 \]
and therefore in particular
\begin{align*}
H_{\Gamma_m} \left((fg)|_e\right)(t) &= f_e(0)g_e(0) + \frac{t}{l_e}\left( f_e(l_e)g_e(l_e) - f_e(0)g_e(0) \right) \\ 
&= f_e(0)g_e(0) + \frac{t}{l_e}\left( f_e'g_e' l_e^2 + \left(f_e(0) g_e' + g_e(0) f_e'\right)l_e \right)
\end{align*}
we obtain 
\[ \left( \left(fg\right)_e - H_{\Gamma_m} \left((fg)|_e\right) \right)(t) = f_e'g_e't^2 - f_e' g_e' l_e t,\quad t\in [0,l_e].\]
This implies that for any edge $e \in E_m$ we have 
\[\E_e \left(\left( \left(fg\right)_e - H_{\Gamma_m} \left((fg)|_e\right) \right)(t)\right) = \left(f_e'g_e'\right)^2\int_0^{l_e}\left(2t-l_e\right)^2 dt = \frac{1}{3}\left(f_e'g_e'\right)^2 l_e^3, \quad t\in [0,l_e]. \]   
Summing up over $e \in E_m$ and using \eqref{E:preparation}, we see that 
\begin{align*}
\E_{\Gamma_m}( f|_{X_{\Gamma_m}}g|_{X_{\Gamma_m}} &- H_{\Gamma_m}\left(f|_{X_{\Gamma_m}}g|_{X_{\Gamma_m}}\right) )\\ 
&= \sum_{|w|=m}r_w^{-1}\sum_{e\in E_m, e\subset K_w} l_e \mathcal{E}_e( f|_{X_{\Gamma_m}}g|_{X_{\Gamma_m}} - H_{\Gamma_m}\left(f|_{X_{\Gamma_m}}g|_{X_{\Gamma_m}}\right))\\
&\leq \frac{1}{3} \sum_{|w|=m}r_w^{-1}\sum_{e\in E_m, e\subset K_w} l_e^4 (f_e')^2(g_e')^2 \\ 
&\leq \frac{1}{3}(\max_i r_i)^m \E(f) \sum_{|w|=m}r_w^{-1}\sum_{e\in E_m, e\subset K_w} l_e^2 g_e'^2 \\
&= \frac{1}{3}(\max_i r_i)^m \E(f)\E(g).
\end{align*}
\end{proof}

In what follows let $\mu$ be a finite Borel measure on $K$ so that $V(m):=\inf_{|w|=m}\mu(K_w)>0$ for each $m$. Given an edge $e\in E_m$ we set
\begin{equation}\label{E:definepsi1}
\psi_{e,m}(x):=\frac{1}{\deg_m(i(e))}\psi_{i(e),m}(x)+\frac{1}{\deg_m(j(e))}\psi_{j(e),m}(x),\quad x\in K,
\end{equation}
to obtain a function $\psi_{e,m}$ which satisfies 
\begin{equation}\label{E:sumpsiem1}
\sum_{e\in E_m}\left\langle \psi_{e,m},\mathbf{1}\right\rangle_{L^2(K,\mu)}=\sum_{p\in V_m}\psi_{p,m}(x)= 1,\quad x\in K.
\end{equation}
We endow the space $X_{\Gamma_m}$ with the measure $\mu^{(m)}:=\mu_{\Gamma_m}$ which on each individual edge $e\in E_m$ equals 
\[\frac{1}{l_e}\left(\int_K \psi_{e,m}(x)\mu(dx)\right)\:\lambda^1|_{e},\]
here $\lambda^1$ denotes the one-dimensional Lebesgue measure. Writing $X_w^{(m)}$ for $X_{\Gamma_m}\cap K_w=V_w=\bigcup_{e\in E_m, e\subset K_w} e$, we see that
\[\mu_{\Gamma_m}(X_w^{(m)})=\sum_{e\in E_m, e\subset K_w} \int_K\psi_{e,m}(x) \mu(dx)\geq \int_{K_w}\psi_{e,m}(x) \mu(dx)=\mu(K_w)\geq V(m),\]
so part (i) of Assumption \ref{A:connection} is satisfied. The remaining conditions in Assumption \ref{A:connection} (ii)-(iv) now follow from results in \cite{HinzMeinert19+}: If for each $m$ we consider the linear operator $\Phi_m:L^2(X_{\Gamma_m}, \mu_{\Gamma_m})\rightarrow L^2(K, \mu)$ defined by 
\[\Phi_m u(t)=\sum_{e\in E_m}\:\mathbf{1}_e(t)\:\frac{\left\langle u,\psi_{e,m}\right\rangle_{L^2(K,\mu)}}{\left(\int_K \psi_{e,m} d\mu\right)},\quad u\in L^2(K,\mu),\]
then \eqref{E:uninormbound} and \eqref{E:KSconvHilbert} are satisfied by \cite[Prop. 4.1]{HinzMeinert19+} (there the operator $\Phi_m$ is denoted by $J_{0,m}^\ast$), and a proof of \eqref{E:adjoint} is provided in \cite[Lemma C.3]{HinzMeinert19+}. 
Condition \eqref{E:connection} follows from Lemma \cite[Lemma C.2]{HinzMeinert19+} (there the pointwise restriction of $m$-harmonic functions to $X_{\Gamma_m}$ is denoted by $\widetilde{J}_{1,m}$). 
For the operators $\ext_m H_{\Gamma_m}: \dot{W}^{1,2}(X_{\Gamma_m})\rightarrow H_m(K)$ (denoted by $J_{1,m}$ in Lemma \cite[Lemma C.2]{HinzMeinert19+}) we can use \cite[Lemma C.2]{HinzMeinert19+} and \cite[Prop. 4.1]{HinzMeinert19+} to see that  if $(f_m)_m$ is a sequence of functions $f_m \in \dot{W}^{1,2}(X_{\Gamma_m})$ with $\sup_m \E_{\Gamma_m}(f_m)<\infty$ then 
\[\Vert \ext_m  H_{\Gamma_m}f_m \Vert_{L^2(K, \mu)}\leq \Vert f_m \Vert_{L^2(X_{\Gamma_m}, \mu_{\Gamma_m})} +C(\max_i r_i)^{m/2}\sup_m \mathcal{E}_{\Gamma_m}(f_m)^{1/2}\]
with a positive constant $C$ depending only on $N$. Consequently also \eqref{E:L2consist} is satisfied.

\subsection{Short remarks on possible generalizations}\label{SS:generalizations}

Although not covered by the above results, we conjecture that under suitable additional conditions one can produce similar results for p.c.f. self-similar sets with non-regular harmonic structures, diamond lattice fractals, \cite{AkkermansDunneTeplyaev, Alonso, HamblyKumagai}, Laaks\o~ spaces, \cite{Steinhurst}, and compact fractafolds, \cite{Str03}. 
Well-known general results, \cite[Proposition 2.10 and Theorem 2.14]{Ki03}, motivate the question how to implement discrete or metric graph approximations for the Sierpinski carpet, endowed with its standard energy form. Another question is how to establish approximations by graph-like manifolds, \cite{PostSimmer18}, for non-symmetric forms of type (\ref{E:Q}), and a transparent discussion of drift and divergence terms should be quite interesting. A further open question is how to establish approximations in energy norm. This would most likely have to involve second order splines as for instance discussed in \cite{StrU00} for the case of the Sierpinski gasket endowed with its standard energy form and the self-similar Hausdorff measure. Several tools used in the present paper rely heavily on the use of linear and harmonic functions, and second order versions are not so straightforward to see. A question in a different direction, particularly interesting in connection with probability, \cite{CHT18}, is how to approximate equations involving nonlinear first order terms. There are results on the convergence of certain non-linear operators along varying spaces, \cite{Toelle10}, but they do not cover these cases.

\section{Restrictions of vector fields}\label{S:restriction}

As mentioned in Remark \ref{R:restriction}, a finitely ramified cell structure also permits a restriction
operation for specific vector fields. As discussed in \cite{HinzMeinert19+} the spaces $\im \partial$ and $\mathcal{F}/\sim$ are isometric as Hilbert spaces, and similarly for $\im \partial^{(m)}$ and $\mathcal{F}^{(m)}/\sim$. Recall also that for each $m$ the pointwise restriction $u\mapsto u|_{X^{(m)}}$ is an isometry from $H_m(X)/\sim$ onto $H_m(X^{(m)})/\sim$. Therefore (\ref{E:equivclass}) and (\ref{E:equivclassm}) give rise to a well defined restriction of gradients of $n$-harmonic functions: Given $f\in H_n(X)$ and $m\geq n$ we can define the restriction of $\partial f$ to $X^{(m)}$ by 
\begin{equation}\label{E:restrictgradients}
(\partial f)|_{X^{(m)}}:=\partial^{(m)}(f|_{X^{(m)}}), 
\end{equation}
and this operation is an isometry from $\partial (H_m(X))$ onto $\partial^{(m)}(H_m(X^{(m)}))$, see for instance \cite[Subsection 4.4]{HinzMeinert19+}. In the sequel we assume, in addition to the assumptions made in Section \ref{S:varying}, that for each $m$ and each $\alpha\in \mathcal{A}_m$ the form $\mathcal{E}_{\alpha}(u)=\frac12\sum_{p\in V_\alpha}\sum_{q\in V_\alpha}c(m;p,q)(u(p)-u(q))^2$, $u\in\mathcal{F}$, is irreducible on $V_\alpha$. Following \cite{IRT12} we define subspaces $\mathcal{H}_m$ of $\mathcal{H}$ by
\[\mathcal{H}_m:=\bigg\lbrace\sum_{\alpha\in\mathcal{A}_m} \mathbf{1}_{X_\alpha}\partial h_\alpha:\quad \text{$h_\alpha\in H_m(X)$ for all $\alpha\in \mathcal{A}_m$}\bigg\rbrace.\]
Then $\mathcal{H}_m\subset\mathcal{H}_{m+1}$ for all $m$, \cite[Lemma 5.3]{IRT12}, and $\bigcup_{m\geq 0}\mathcal{H}_m$ is dense in $\mathcal{H}$, \cite[Theorem 5.6]{IRT12}. To generalize (\ref{E:restrictgradients}) we now define a pointwise restriction of elements of $\mathcal{H}_m$ to $X^{(m)}$ by
\begin{equation}\label{E:restrictfields}
\bigg(\sum_{\alpha\in\mathcal{A}_m}\mathbf{1}_{X_\alpha}\partial h_\alpha\bigg)|_{X^{(m)}}:=\sum_{\alpha\in\mathcal{A}_m}\mathbf{1}_{X_\alpha^{(m)}}\partial^{(m)}(h_\alpha|_{X^{(m)}}),
\end{equation}
and clearly this restriction operation maps $\mathcal{H}_m$ into $\mathcal{H}^{(m)}$. Thanks to the finitely ramified cell structure of $X$ it is straightforward to see that this definition is correct.  The following auxiliary result is parallel to Corollary \ref{C:compatibleb}.

\begin{lemma}\label{L:restrictb}
For any $b\in \mathcal{H}_n$ and any $g\in C(X)$ we have 
\begin{equation}\label{E:limfr}
\lim_m \left\|g|_{X^{(m)}}\cdot b|_{X^{(m)}}\right\|_{\mathcal{H}^{(m)}}= \left\|g\cdot b\right\|_{\mathcal{H}}.
\end{equation}
\end{lemma}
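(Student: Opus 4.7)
The plan is to reduce to spanning vector fields of the form $\mathbf{1}_{X_\alpha}\partial h_\alpha$ and then approximate the discontinuous indicators by continuous functions so that Corollary \ref{C:compatibleb} becomes applicable.

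First I would write $b = \sum_{\alpha\in\mathcal{A}_n}\mathbf{1}_{X_\alpha}\partial h_\alpha$ with $h_\alpha \in H_n(X)$. Since $\mathcal{H}_n\subset\mathcal{H}_m$ for $m\geq n$ and the restriction $(\ref{E:restrictfields})$ is consistent across levels, we have $b|_{X^{(m)}} = \sum_{\alpha\in\mathcal{A}_n} \mathbf{1}_{X_\alpha^{(m)}} \partial^{(m)}(h_\alpha|_{X^{(m)}})$. By bilinearity and polarization the claim reduces to establishing, for each pair $\alpha,\alpha'\in\mathcal{A}_n$,
\[
\lim_m \bigl\langle (g\mathbf{1}_{X_\alpha^{(m)}})\cdot \partial^{(m)}(h_\alpha|_{X^{(m)}}),\, (g\mathbf{1}_{X_{\alpha'}^{(m)}})\cdot \partial^{(m)}(h_{\alpha'}|_{X^{(m)}})\bigr\rangle_{\mathcal{H}^{(m)}} = \bigl\langle (g\mathbf{1}_{X_\alpha})\cdot \partial h_\alpha,\, (g\mathbf{1}_{X_{\alpha'}})\cdot \partial h_{\alpha'}\bigr\rangle_{\mathcal{H}}.
\]

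Second, for each $\alpha\in\mathcal{A}_n$ I would pick a sequence $(\chi^{(k)}_\alpha)_k \subset C(X)$ of Urysohn-type functions with $0 \leq \chi^{(k)}_\alpha \leq 1$ converging to $\mathbf{1}_{X_\alpha}$ pointwise in a boundedly dominated way. Since each $(g\chi^{(k)}_\alpha)\cdot \partial h_\alpha$ is exactly a vector field of the type $(\ref{E:equivclass})$ with $h_\alpha\in H_n(X)$, Corollary \ref{C:compatibleb} applies (in its bilinear form, by polarization) and yields, for each fixed $k$, the convergence
\[
\lim_m \bigl\langle (g\chi^{(k)}_\alpha)|_{X^{(m)}}\cdot \partial^{(m)}(h_\alpha|_{X^{(m)}}),\, (g\chi^{(k)}_{\alpha'})|_{X^{(m)}}\cdot \partial^{(m)}(h_{\alpha'}|_{X^{(m)}})\bigr\rangle_{\mathcal{H}^{(m)}} = \bigl\langle (g\chi^{(k)}_\alpha)\cdot \partial h_\alpha,\, (g\chi^{(k)}_{\alpha'})\cdot \partial h_{\alpha'}\bigr\rangle_{\mathcal{H}}.
\]

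Third, I would let $k \to \infty$. By Proposition \ref{P:indep}, together with the locality of $\mathcal{E}$ (so $J=0$, via Assumption \ref{A:basic} and the discussion following), the right-hand side equals $\int_X g^2 \chi^{(k)}_\alpha\chi^{(k)}_{\alpha'}\, d\nu^c_{h_\alpha,h_{\alpha'}}$, and dominated convergence gives the intended limit $\int_X g^2\mathbf{1}_{X_\alpha\cap X_{\alpha'}}\, d\nu^c_{h_\alpha,h_{\alpha'}}$, matching the right-hand side of the target identity. On the $\mathcal{H}^{(m)}$-side, Proposition \ref{P:indep} produces analogous local contributions plus jump contributions; the latter vanish uniformly in the limit by the vague convergence $J^{(m)} \to 0$ established inside the proof of Corollary \ref{C:jumpsvanish}, and the local contributions converge by $(\ref{E:weakconvenergy})$, using the uniform energy bound $\mathcal{E}^{(m)}(h_\alpha|_{X^{(m)}}) = \mathcal{E}(h_\alpha)$ from $(\ref{E:basic})$.

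The main obstacle is the interchange of the limits $m\to\infty$ and $k\to\infty$. This requires uniform-in-$m$ control of the approximation $\chi^{(k)}_\alpha \to \mathbf{1}_{X_\alpha}$ with respect to the varying inner products, which is provided by the uniform bound $\sup_m \|\partial^{(m)}(h_\alpha|_{X^{(m)}})\|_{\mathcal{H}^{(m)}}^2 = \mathcal{E}(h_\alpha)$, combined with a Portmanteau-type passage to the limit against $(\nu^{(m),c}_{h_\alpha|_{X^{(m)}},h_{\alpha'}|_{X^{(m)}}})_m$; here one must use that the boundary $\partial X_\alpha \subset V_\alpha$ is finite and carries zero $\nu^c_{h_\alpha,h_{\alpha'}}$-mass (an upgraded form of the non-atomicity reasoning already implicit in the proof of Proposition \ref{P:indep} in the strongly local setting).
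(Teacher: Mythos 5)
Your route differs from the paper's mainly in what gets regularized: you smooth the indicators $\mathbf{1}_{X_\alpha}$ by Urysohn functions so that Corollary \ref{C:compatibleb} (or, more directly, Corollary \ref{C:jumpsvanish}) applies for each fixed $k$, and then interchange the limits in $k$ and $m$; the paper instead keeps the indicators and regularizes $g$, replacing it by a function that is constant on the level-$n_g$ cells and vanishes on $V_{n_g}$, and then estimates directly using the weak convergence (\ref{E:weakconvenergy}), the Portmanteau lemma and the nonatomicity of the limit energy measures. Both arguments rest on the same fact, namely that the measures $\nu^{(m)}_{h_\alpha|_{X^{(m)}}}$ asymptotically assign negligible mass to small closed neighbourhoods of the finite sets $V_{n_g}$, and you correctly identify this as the crux of the interchange of limits. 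What your regularization buys is that the fixed-$k$ step is an immediate citation; what it costs is that the whole difficulty is pushed into the interchange, which you only sketch.

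Two points in that sketch need repair. First, you implicitly identify the cell-wise restriction $\mathbf{1}_{X_\alpha^{(m)}}\cdot\partial^{(m)}(h_\alpha|_{X^{(m)}})$ of (\ref{E:restrictfields}) with a limit of module actions $(g\chi^{(k)}_\alpha)|_{X^{(m)}}\cdot\partial^{(m)}(h_\alpha|_{X^{(m)}})$. When $\mathcal{E}^{(m)}$ has a jump part --- the discrete graph approximations are the basic example --- these do not agree termwise: for a pair $(p,q)$ lying in a neighbouring cell with $p\in V_\alpha\cap V_{\alpha'}$ the module action of $\mathbf{1}_{X_\alpha}$ contributes the factor $\overline{\mathbf{1}_{X_\alpha}}(p,q)=1/2$ rather than $0$. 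The discrepancy is carried entirely by pairs meeting the finite set $V_\alpha$, so it is controlled by precisely the boundary-mass estimate above, but this has to be argued, not assumed. Second, the claim that the jump contributions "vanish uniformly by the vague convergence $J^{(m)}\to 0$" is insufficient as stated: vague convergence off the diagonal does not prevent jump energy from concentrating near the diagonal (indeed $\mathcal{E}^{(m)}(h_\alpha|_{X^{(m)}})=\mathcal{E}(h_\alpha)$ is constant in $m$ by (\ref{E:basic})), which is why Corollary \ref{C:jumpsvanish} needs the extra estimate on $\int\int(d_ug)^2(d_uf)^2\,J^{(m)}(dxdy)$. If you route the fixed-$k$ step entirely through Corollary \ref{C:jumpsvanish} and confine your own analysis to the error terms $\big\|(g(\chi^{(k)}_\alpha-\mathbf{1}_{X_\alpha}))|_{X^{(m)}}\cdot\partial^{(m)}(h_\alpha|_{X^{(m)}})\big\|_{\mathcal{H}^{(m)}}^2$, bounding them by $\|g\|_{\sup}^2\,\nu^{(m)}_{h_\alpha|_{X^{(m)}}}(F_k)$ with $F_k$ closed sets decreasing to a subset of $V_\alpha$ and applying Portmanteau together with nonatomicity exactly as in (\ref{E:Vngsmall}), the argument closes and is a legitimate alternative to the paper's proof.
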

%

\begin{proof}
Let $\varepsilon>0$. Choose $n_g\geq n$ sufficiently large such that 
\[\sup_{\beta\in\mathcal{A}_{n_g}}\sup_{x,y\in X_\beta}|g(x)^2-g(y)^2|<\frac{\varepsilon}{5\sum_{\alpha\in\mathcal{A}_n}\mathcal{E}(h_\alpha)}.\]
For all $\beta\in\mathcal{A}_{n_g}$ choose $x_\beta\in X_\beta\setminus V_{n_g}$ and define $\widetilde{g}(x):= g(x_\beta)$ if $x\in X_\beta\setminus V_{n_g}$ and $\widetilde{g}(x):=0$ if $x \in V_{n_g}$. Then we we have 
\[\sup_{\beta\in\mathcal{A}_{n_g}}\sup_{x\in X_\beta\setminus V_{n_g}} |g(x)^2-\widetilde{g}(x)^2|<\frac{\varepsilon}{5\sum_{\alpha\in\mathcal{A}_n}\mathcal{E}(h_\alpha)}\]
and therefore
\begin{equation}\label{E:approx1}
\bigg|\sum_{\alpha\in\mathcal{A}_n}\int_{X_\alpha\setminus V_{n_g}} g|_{X^{(m)}}^2 d\nu_{h_\alpha|_{X^{(m)}}}^{(m)}-\int_{X_\alpha\setminus V_{n_g}} \widetilde{g}|_{X^{(m)}}^2 d\nu_{h_\alpha|_{X^{(m)}}}^{(m)}\bigg|<\frac{\varepsilon}{5}
\end{equation}
for all $m$ and also
\begin{equation}\label{E:approx2}
\bigg|\sum_{\alpha\in\mathcal{A}_n}\int_{X_\alpha\setminus V_{n_g}} g^2 d\nu_{h_\alpha}-\int_{X_\alpha\setminus V_{n_g}} \widetilde{g}^2 d\nu_{h_\alpha}\bigg|<\frac{\varepsilon}{5}.
\end{equation}
The energy measures $\nu_{h_\alpha}$ are nonatomic, hence by (\ref{E:weakconvenergy}) and the Portmanteau lemma we can find a positive integer $m_\varepsilon\geq n_g$ so that for all $m\geq m_\varepsilon$ and all $\alpha\in\mathcal{A}_n$ we have 
\begin{equation}\label{E:Vngsmall}
\nu_{h_\alpha|_{X^{(m)}}}^{(m)}(V_{n_g})<\frac{\varepsilon}{2|\mathcal{A}_n|^2\left\|g\right\|_{\sup}^2}
\end{equation}
and 
\begin{equation}\label{E:measuresclose}
\bigg|\nu_{h_\alpha|_{X^{(m)}}}^{(m)}(X_\beta\setminus V_{n_g})-\nu_{h_\alpha}(X_\beta\setminus V_{n_g})\bigg|<\frac{\varepsilon}{2|\mathcal{A}_n|\left\|g\right\|_{\sup}^2}.
\end{equation}
Since (\ref{E:measuresclose}) implies 
\begin{align}
\bigg|\sum_{\alpha\in \mathcal{A}_n}\sum_{\beta\in \mathcal{A}_{n_g}}g(x_\beta)^2 & \nu_{h_\alpha|_{X^{(m)}}}^{(m)}(X_\alpha\cap X_\beta\cap V_{n_g}^c)-\sum_{\alpha\in \mathcal{A}_n}\sum_{\beta\in \mathcal{A}_{n_g}}g(x_\beta)^2 \nu_{h_\alpha}(X_\alpha\cap X_\beta\cap V_{n_g}^c)\bigg|\notag\\
&\leq \left\|g\right\|_{\sup}^2 \sum_{\beta\in \mathcal{A}_{n_g}}\bigg|\nu_{h_\alpha|_{X^{(m)}}}^{(m)}(X_\beta\setminus V_{n_g})-\nu_{h_\alpha}(X_\beta\setminus V_{n_g})\bigg|\notag\\
&\leq\varepsilon,\notag
\end{align}
we can use (\ref{E:approx1}) and (\ref{E:approx2}) to obtain
\begin{equation}\label{E:integralsclose}
\bigg|\sum_{\alpha\in\mathcal{A}_n}\int_{X_\alpha\setminus V_{n_g}} g|_{X^{(m)}}^2 d\nu_{h_\alpha|_{X^{(m)}}}^{(m)}- \sum_{\alpha\in\mathcal{A}_n}\int_{X_\alpha\setminus V_{n_g}} g^2 d\nu_{h_\alpha}\bigg|<\frac{3\varepsilon}{5}.
\end{equation}
On the other hand, we have 
\[\left\|g|_{X^{(m)}}\cdot b|_{X^{(m)}}\right\|_{\mathcal{H}^{(m)}}^2=\sum_{\alpha\in\mathcal{A}_n} \int_{X_\alpha}
g|_{X^{(m)}}^2 d\nu^{(m)}_{h_\alpha|_{X^{(m)}}}+\sum_{\alpha,\alpha'\in\mathcal{A}_n, \alpha'\neq \alpha}\int_{X_\alpha \cap X_{\alpha'}} g|_{X^{(m)}}^2 d\nu^{(m)}_{h_\alpha|_{X^{(m)}},h_{\alpha'}|_{X^{(m)}}}.\]
By (\ref{E:Vngsmall}), the Cauchy-Schwarz inequality for energy measures and Definition \ref{D:finitelyramified} (vi) we see that the second summand on the right hand side is bounded by 
\[\left(\sum_{\alpha\in\mathcal{A}_n} \nu^{(m)}_{h_\alpha|_{X^{(m)}}}(V_{n_g})^{1/2}\right)^2<\frac{\varepsilon}{5},\]
and using (\ref{E:Vngsmall}) once more, we obtain
\begin{equation}\label{E:closetonorm}
\bigg|\left\|g|_{X^{(m)}}\cdot b|_{X^{(m)}}\right\|_{\mathcal{H}^{(m)}}^2-\sum_{\alpha\in\mathcal{A}_n}\int_{X_\alpha\setminus V_{n_g}} g|_{X^{(m)}}^2 d\nu^{(m)}_{h_\alpha|_{X^{(m)}}}\bigg|<\frac{2\varepsilon}{5}.
\end{equation}
Combining (\ref{E:integralsclose}), (\ref{E:closetonorm}) and the fact that $\left\|g\cdot b\right\|_{\mathcal{H}}^2=\sum_{\alpha\in\mathcal{A}_n}\int_{X_\alpha} g^2 d\nu_{h_\alpha}$, we arrive at (\ref{E:limfr}).
\end{proof}

\appendix

\section{Generalized strong resolvent convergence}\label{S:generalized convergence}

The notation in this section is different from that in the main text. We review a special case of the notion of convergence for bilinear forms as studied in \cite{Toelle06} (and, among more general results, also in \cite{Toelle10}). It covers in particular the case of coercive closed forms, \cite{MaRoeckner92}. The results in \cite{Toelle06} are generalization of results in \cite[Section 3]{Hino98} to the framework of varying Hilbert spaces in \cite{KuwaeShioya03}. 

In \cite[Subsections 2.2 - 2.7]{KuwaeShioya03} a concept of convergence $H_m\to H$ of  Hilbert spaces $H_m$ to a Hilbert space $H$ was introduced, including a suitable notion of generalized strong resolvent convergence for self-ajoint operators, cf. \cite[Definition 2.1]{KuwaeShioya03}. A basic tool of the method in \cite{KuwaeShioya03} is a family of identification operators $\Phi_m$ defined on a dense subspace $\mathcal{C}$ of the limit space $H$, each mapping $\mathcal{C}$  into one of the spaces $H_m$.  Let $H$, $H_1$, $H_2$, ... be separable Hilbert spaces. The sequence $(H_m)_m$ is said to \emph{converge to $H$ in KS-sense}, $\lim_m H_m=H$, if there are a dense subspace $\mathcal{C}$ of $H$ and operators 
\begin{equation}\label{E:Phi}
\Phi_m:\mathcal{C}\to H_m
\end{equation}
such that 
\begin{equation}\label{E:HmtoH}
\lim_m \left\|\Phi_m w\right\|_{H_m}=\left\|w\right\|_H,\quad w\in\mathcal{C}.
\end{equation}
We recall \cite[Definitions 2.4, 2.5 and 2.6]{KuwaeShioya03}.

\begin{definition}\label{D:KS}\mbox{}
\begin{enumerate}
\item[(i)] A sequence $(u_m)_m$ with $u_m\in H_m$ is said to \emph{converge KS-strongly to $u\in H$} if there is a sequence $(\widetilde{u}_m)_m\subset \mathcal{C}$ such that 
\begin{equation}\label{E:strongconv}
\lim_{n\to\infty}\varlimsup_{m\to\infty}\left\|\Phi_m\widetilde{u}_n-u_m\right\|_{H_m}=0\quad \text{and}\quad \lim_{n\to\infty} \left\|\widetilde{u}_n-u\right\|_H=0.
\end{equation}
\item[(ii)] A sequence $(u_m)_m$ with $u_m\in H_m$ is said to \emph{converge KS-weakly to $u\in H$} if $\lim_m \left\langle u_m, v_m\right\rangle_{H_m} = \left\langle u, v\right\rangle_H$ for every sequence $(v_m)_m$ KS-strongly convergent to $v$.
\item[(iii)] A sequence $(B_m)_m$ of bounded linear operators $B_m:H_m\to H_m$ is said to \emph{converge KS-strongly} to a bounded linear operator $B:H\to H$ if for any sequence $(u_m)_m$ with $u_m\in H_m$ converging KS-strongly to $u\in H$ the sequence $(B_m u_m)_m$  converges KS-strongly to $Bu$. 
\end{enumerate}
\end{definition}

\begin{remark}\label{R:classicalKS}
In the classical case where $H_m\equiv H$ and $\Phi_m\equiv\id_H$ for all $m$ the strong convergence of bounded linear operators $B_m$ defined in (iii) differs from the classical definition of strong convergence of bounded linear operators on Hilbert spaces, as pointed out in \cite[Section 2.3]{KuwaeShioya03}. However, a sequence $(B_m)_m$ of bounded linear operators $B_m:H\to H$ admitting a uniform bound in operator norm $\sup_m \left\|B_m\right\|<+\infty$ converges KS-strongly to a bounded linear operator $B:H\to H$ if and only if it converges strongly to $B$ in the usual sense,  \cite[Lemma 2.8 (1)]{KuwaeShioya03}. 
\end{remark}

Now suppose that $(A_m)_m$ is a sequence of linear operators $A_m:H_m\to H_m$ each of which generates a $C_0$-semigroup and also
$A:H\to H$ is the generator of a $C_0$-semigroup. Suppose that there exist constants $\omega\in\mathbb{R}$ and $M>0$ such that the resolvent sets of each $A_m$ and of $A$ contain $(\omega,+\infty)$ and for any positive integer $n$ and any $\lambda>\omega$ we have $\sup_m \left\|(\lambda-A_m)^{-n}\right\|\leq M(\lambda-\omega)^{-n}$ and $\left\|(\lambda-A)^{-n}\right\|\leq M(\lambda-\omega)^{-n}$. In this situation we say that the $A_m$ \emph{converge} to $A$ \emph{in KS-generalized strong resolvent sense} if for some (hence all) $\lambda>\omega$ the $\lambda$-resolvent operators $R_{\lambda}^{A_m}=(\lambda-A_m)^{-1}$ of the $A_m$ converge KS-strongly to the $\lambda$-resolvent operator $R_\lambda^A=(\lambda-A)^{-1}$ of $A$.

\begin{remark}
For any $\lambda>\omega$ the sequence $(R_{\lambda}^{A_m})_m$ satisfies $\sup_m \big\|R_{\lambda}^{A_m}\big\|<M(\lambda-\omega)^{-1}$. In the classical case where $H_m\equiv H$  and $\Phi_m\equiv \id_H$ for all $m$ we therefore observe that the sequence of operators $(A_m)_m$ as in (iv) converges to $A$ as in (iv) in the KS-generalized strong resolvent sense if and only if it converges to $A$ in the usual strong resolvent sense, see \cite[Section 8.1]{Kato80} (or \cite[Section VIII.7]{RS80} for the self-adjoint case).
\end{remark}

One can also introduce a generalization of Mosco convergence for coercive closed forms (not necessarily symmetric). The following definition is a shorted version for coercive closed forms, \cite{MaRoeckner92}, of \cite[Definition 7.14]{Toelle10} (see also \cite[Definition 2.43]{Toelle06}) sufficient for our purposes. We use notation (\ref{E:symmetricpart}) to denote the symmetric part of a bilinear form.

\begin{definition}\label{D:generalized convergence}
A sequence $((\mathcal{Q}^{(m)}, \mathcal{D}(\mathcal{Q}^{(m)})))_m$ of coercive closed forms $(\mathcal{Q}^{(m)}, \mathcal{D}(\mathcal{Q}^{(m)}))$ on $H_m$, respectively, with uniformly bounded sector constants, $\sup_m K_m<+\infty$, is said to \emph{converge in the KS-generalized Mosco sense} to a coercive closed form $(\mathcal{Q}, \mathcal{D}(\mathcal{Q}))$ on $H$ if there exists a subset $\mathcal{C} \subset \mathcal{D}(\mathcal{Q})$, dense in $\mathcal{D}(\mathcal{Q})$, and the following two conditions hold:
\begin{itemize}
\item[(i)] If $(u_m)_m$ KS-weakly converges to $u$ in $H$ and satisfies $\varliminf_m \widetilde{\mathcal{Q}}^{(m)}_1(u_m) <\infty$, then $u \in \mathcal{D}(\mathcal{Q})$. 
\item[(ii)] For any sequence $(m_k)_k$ with $m_k \uparrow \infty$, any $w \in \mathcal{C}$, any $u \in \mathcal{D} (\mathcal{Q})$ and any sequence $(u_k)_k$, $u_k \in H_{m_k}$, converging KS-weakly to $u$ and such that $\sup_k \widetilde{\mathcal{Q}}_1^{(m_k)}(u_k)<\infty$, there exists a sequence $(w_k)_k$, $w_k \in H_{m_k}$, converging KS-strongly to $w$ and such that 
\[\varliminf_k \mathcal{Q}^{(m_k)}(w_k, u_k)\leq \mathcal{Q}(w,u).\]
\end{itemize}
\end{definition}

In \cite{Hino98, Toelle06, Toelle10} one can find further details. The next Theorem is a special case of \cite[Theorem 7.15, Corollary 7.16 and Remark 7.17]{Toelle10} (see also \cite[Theorem 2.4.1 and Corollary 2.4.1]{Toelle06}), which generalize \cite[Theorem 3.1]{Hino98}.

\begin{theorem}\label{T:convergence of forms}
For each $m$ let $(\mathcal{Q}^{(m)}, \mathcal{D}(\mathcal{Q}^{(m)}))$  be a coercive closed form on $H_m$ and assume that the corresponding sector constants are uniformly bounded, $\sup_m K_m<+\infty$. Let $\big(G^{\mathcal{Q}^{(m)}}_\alpha\big)_{\alpha>0}$, $\big(T_t^{\mathcal{Q}^{(m)}}\big)_{t>0}$ and $(\mathcal{L}^{\mathcal{Q}^{(m)}},\mathcal{D}(\mathcal{L}^{\mathcal{Q}^{(m)}}))$ be the associated resolvent, semigroup and generator on $H_m$. Suppose 
that $(\mathcal{Q}, \mathcal{D}(\mathcal{Q}))$ is a coercive closed form on $H$ with resolvent $\big(G^{\mathcal{Q}}_\alpha\big)_{\alpha>0}$, semigroup $\big(T_t^{\mathcal{Q}}\big)_{t>0}$ and generator $(\mathcal{L}^{\mathcal{Q}},\mathcal{D}(\mathcal{L}^{\mathcal{Q}}))$. Then the following are equivalent:
\begin{itemize}
\item[(1)] The sequence of forms $(\mathcal{Q}^{(m)}, \mathcal{D}(\mathcal{Q}^{(m)}))_m$ converges to $(\mathcal{Q}, \mathcal{D}(\mathcal{Q}))$ in the KS-generalized Mosco sense.
\item[(2)] The sequence of operators $\big(G^{\mathcal{Q}^{(m)}}_\alpha\big)_{m}$ converges to $G^{\mathcal{Q}}_\alpha$ KS-strongly for any $\alpha>0$.
\item[(3)] The sequence of operators $\big(T_t^{\mathcal{Q}^{(m)}}\big)_{m}$ converges to $T_t^{\mathcal{Q}}$ KS-strongly for any $t>0$.
\item[(4)] The sequence of operators $(\mathcal{L}^{\mathcal{Q}^{(m)}},\mathcal{D}(\mathcal{L}^{\mathcal{Q}^{(m)}}))$ converges to $(\mathcal{L}^{\mathcal{Q}},\mathcal{D}(\mathcal{L}^{\mathcal{Q}}))$ in the KS-generalized strong resolvent sense.
\end{itemize}
\end{theorem}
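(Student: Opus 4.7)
The plan is to handle the three equivalences $(2)\Leftrightarrow(4)$, $(2)\Leftrightarrow(3)$ and $(1)\Leftrightarrow(2)$ separately, adapting the classical Mosco--Trotter--Kato machinery to the varying Hilbert space framework of \cite{KuwaeShioya03}. The uniform sector bound $\sup_m K_m<\infty$ is what guarantees the uniform resolvent and semigroup estimates needed for every ``standard'' argument below; without it one would have neither a common lower bound on the spectrum nor uniform control of the antisymmetric parts of the forms.

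The two equivalences $(2)\Leftrightarrow(4)$ and $(2)\Leftrightarrow(3)$ are essentially book-keeping. For the first, KS-strong convergence of $G^{\mathcal{Q}^{(m)}}_\alpha$ at one value of $\alpha$ above the common lower bound propagates to every such $\alpha$ via the resolvent identity together with the uniform operator-norm bound $\|G^{\mathcal{Q}^{(m)}}_\alpha\|\leq(\alpha-\omega)^{-1}$; this is just the definition of KS-generalized strong resolvent convergence. For the second I would invoke a KS-variant of Trotter--Kato: one direction uses the integral representation $G^{\mathcal{Q}^{(m)}}_\alpha f=\int_0^\infty e^{-\alpha t}T^{\mathcal{Q}^{(m)}}_tf\,dt$ with a KS-dominated-convergence argument, the other the Yosida approximants $n\mathcal{L}^{\mathcal{Q}^{(m)}}G^{\mathcal{Q}^{(m)}}_n$ and the exponential formula, all estimates being uniform in $m$ by the uniform sector constant.

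The substantive content is the equivalence $(1)\Leftrightarrow(2)$, which I would treat as a direct generalization of \cite[Theorem 3.1]{Hino98}. For $(1)\Rightarrow(2)$, fix a sufficiently large $\alpha$ and let $f_m\in H_m$ converge KS-strongly to $f\in H$. Setting $u_m:=G^{\mathcal{Q}^{(m)}}_\alpha f_m$, the identity $\mathcal{Q}_\alpha^{(m)}(u_m,u_m)=\langle f_m,u_m\rangle_{H_m}$ combined with the sector condition yields $\sup_m\widetilde{\mathcal{Q}}_\alpha^{(m)}(u_m)<\infty$, so after extraction of a subsequence $u_m$ converges KS-weakly to some $u\in H$. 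Condition~(i) of Definition \ref{D:generalized convergence} forces $u\in\mathcal{D}(\mathcal{Q})$, and condition~(ii), applied to arbitrary $w\in\mathcal{C}$, lets me pass to the limit in $\mathcal{Q}_\alpha^{(m)}(w_m,u_m)=\langle w_m,f_m\rangle_{H_m}$ and thereby identify $u=G^{\mathcal{Q}}_\alpha f$; uniqueness of the limit removes the subsequence. KS-weak convergence is upgraded to KS-strong convergence by showing $\widetilde{\mathcal{Q}}_\alpha^{(m)}(u_m)\to\widetilde{\mathcal{Q}}_\alpha(u)$, which gives $\|u_m\|_{H_m}\to\|u\|_H$ and then the existence of the required approximating sequence from $\mathcal{C}$. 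For $(2)\Rightarrow(1)$ the natural choice is $\mathcal{C}:=G^{\mathcal{Q}}_\alpha(H)$, which is dense in $\mathcal{D}(\mathcal{Q})$ in the form norm. Condition~(i) follows by writing $u_m=G^{\mathcal{Q}^{(m)}}_1(1-\mathcal{L}^{\mathcal{Q}^{(m)}})u_m$, extracting a KS-weak subsequential limit of $(1-\mathcal{L}^{\mathcal{Q}^{(m)}})u_m$ from the uniform form bound, and using resolvent convergence to place the limit in $\mathcal{D}(\mathcal{Q})$. Condition~(ii) is obtained by defining $w_m:=G^{\mathcal{Q}^{(m)}}_\alpha\Phi_m h$ when $w=G^{\mathcal{Q}}_\alpha h\in\mathcal{C}$ and computing $\mathcal{Q}^{(m)}(w_m,u_m)=\langle\Phi_m h,u_m\rangle_{H_m}-\alpha\langle w_m,u_m\rangle_{H_m}$ directly.

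The step I expect to be the principal obstacle is the upgrade from KS-weak to KS-strong convergence inside the argument for $(1)\Rightarrow(2)$: in a single Hilbert space this is a triviality given norm convergence, but over varying Hilbert spaces one must actually exhibit the approximating sequence $\widetilde{u}_n\in\mathcal{C}$ of Definition \ref{D:KS}(i) in such a way that $\Phi_m\widetilde{u}_n$ is energy-close to $u_m$ for all large $m$, and this is where Mosco condition~(ii), the density of $\mathcal{C}$ in the form norm and the sector condition interact in a delicate way. A secondary technical point, felt throughout, is the correct treatment of the antisymmetric parts of the forms under KS-weak limits; bilinearity together with the uniform bound $K_m\leq K_\infty$ reduces the relevant estimates to those for the symmetric parts $\widetilde{\mathcal{Q}}^{(m)}$, but this reduction must be carried out carefully in each of the two implications above.
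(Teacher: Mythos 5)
The paper does not actually prove this theorem: it is quoted verbatim as a special case of \cite[Theorem 7.15, Corollary 7.16 and Remark 7.17]{Toelle10} (see also \cite[Theorem 2.4.1 and Corollary 2.4.1]{Toelle06}), which generalize \cite[Theorem 3.1]{Hino98} to the Kuwae--Shioya framework of varying Hilbert spaces. Your decomposition into $(2)\Leftrightarrow(4)$ (definitional, via the resolvent identity and the uniform bound), $(2)\Leftrightarrow(3)$ (a Trotter--Kato argument with estimates uniform in $m$ thanks to $\sup_m K_m<\infty$), and $(1)\Leftrightarrow(2)$ (the Hino argument) is exactly the route taken in those references, so there is no genuine divergence of method to report; you have reconstructed the cited proof in outline.

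Two points in the sketch need repair before it would compile into a proof. First, there is a slot mismatch in the identification step of $(1)\Rightarrow(2)$: by (\ref{E:Green}) the Green operator satisfies $\mathcal{Q}^{(m)}_\alpha(u_m,w_m)=\langle f_m,w_m\rangle_{H_m}$ with $u_m=G^{\mathcal{Q}^{(m)}}_\alpha f_m$ in the \emph{first} argument, whereas condition (ii) of Definition \ref{D:generalized convergence} controls $\varliminf_k\mathcal{Q}^{(m_k)}(w_k,u_k)$ with the KS-weakly convergent, form-bounded sequence in the \emph{second} argument. As stated, condition (ii) therefore identifies limits of the co-resolvents $\widehat{G}^{\mathcal{Q}^{(m)}}_\alpha$; recovering the resolvents requires either the companion condition for the adjoint forms or the duality argument via the uniform sector bound that \cite{Hino98} and \cite{Toelle10} carry out. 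Second, the step you single out as the principal obstacle --- upgrading KS-weak to KS-strong convergence --- is in fact supplied off the shelf by the framework: KS-weak convergence together with $\lim_m\|u_m\|_{H_m}=\|u\|_H$ already implies KS-strong convergence (\cite[Section 2]{KuwaeShioya03}), so no approximating sequence from $\mathcal{C}$ need be built by hand. The genuinely delicate point is instead the lower-semicontinuity estimate $\varliminf_m\widetilde{\mathcal{Q}}^{(m)}_\alpha(u_m)\geq\widetilde{\mathcal{Q}}_\alpha(u)$ needed to match the upper bound coming from $\mathcal{Q}^{(m)}_\alpha(u_m,u_m)=\langle f_m,u_m\rangle_{H_m}$; this has to be extracted from conditions (i) and (ii) by testing with $\pm w$ for $w\in\mathcal{C}$ approximating $u$ in form norm, and your sketch does not indicate how. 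Neither issue is a wrong turn, but both are exactly where the cited proofs spend their effort.
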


\begin{remark}\mbox{}\label{R:simplify}
Theorem \ref{T:convergence of forms} and Definition \ref{D:generalized convergence} provide a characterization of convergence in the (KS-generalized) strong resolvent sense in terms of the associated bilinear forms. In the case of symmetric forms these conditions differ from those originally used in \cite[Definition 2.1.1 and Theorem 2.4.1]{Mosco94} and \cite[Definition 2.11 and Theorem 2.4]{KuwaeShioya03}, see \cite[Remark 3.4]{Hino98}
\end{remark}



\begin{thebibliography}
\normalsize

\bibitem{AkkermansDunneTeplyaev}
E. Akkermans, G. Dunne, A. Teplyaev, \emph{Physical consequences of complex dimensions of fractals}, Europhys.
Lett. {\bf 88} (2009) 40007.

\bibitem{Allain75}
G. Allain, \emph{Sur la repr\'esentation des formes de Dirichlet},
Ann. Inst. Fourier {\bf 25} (1975), 1--10.

\bibitem{Alonso}
P. Alonso Ruiz, \emph{Explicit Formulas for Heat Kernels on Diamond Fractals}, 
Commun. Math. Phys. {\bf 364} (2018), 1305--1326.

\bibitem{AmbrosioHonda17}
L. Ambrosio, S. Honda, \emph{New stability results for sequences of metric measure spaces with uniform Ricci bounds from below}, in: \emph{Measure Theory in Non-smooth Spaces}, ed. N. Gigli, deGruyter, New York, 2017, pp. 1--51.

\bibitem{AmbrosioStraTrevisan17}
L. Ambrosio, F. Stra, D. Trevisan, \emph{Weak and strong convergence of derivations and stability of flows with respect to MGH convergence}, J. Funct. Anal. {\bf 272} (3) (2017), 1182--1229.

\bibitem{Attouch}
H. Attouch, \emph{Variational Convergence for Functions and Operators}, Pitman, London, 1984.

\bibitem{Azzam}
J. Azzam, M. Hall, R.S. Strichartz, \emph{Conformal energy, conformal Laplacian, and energy measures on the Sierpinski gasket},
Trans. Amer. Math. Soc. {\bf 360} (2008), 2089--2130.

\bibitem{Ba} 
M. T. Barlow, \emph{Diffusions on fractals}, Lectures on Probability Theory and Statistics (Saint-Flour, 1995), 1--121,
Lecture Notes in Math. \textbf{1690}, Springer, Berlin, 1998.


\bibitem{BB89}
M. Barlow, R. Bass, \emph{The construction of Brownian motion on the Sierpinski carpet}, Ann. Inst. H. Poincar\'e {\bf 25} (1989) 225--257.

\bibitem{BBK06}
M. Barlow, R. F. Bass, T. Kumagai, \emph{Stability of parabolic Harnack inequalities on metric measure spaces},
J. Math. Soc. Japan {\bf 58}(2), 485--519.

\bibitem{BBKT10}
M. Barlow, R. F. Bass, T. Kumagai, A. Teplyaev, \emph{Uniqueness of Brownian motion on Sierpi\`nski carpets},
J. Eur. Math. Soc. (JEMS) 12 (2010), no. 3, 655--701.


\bibitem{BarlowPerkins}
M.T. Barlow, E.A. Perkins, \emph{Brownian motion on the Sierpinski gasket}, 
Probab. Theory Relat. Fields {\bf 79} (1988), 543--624.

\bibitem{BK16}
F. Baudoin, D. J. Kelleher, \emph{Differential forms on Dirichlet spaces and Bakry- \'{E}mery estimates on metric graphs}, Trans. Amer. Math. Soc. {\bf 371}(5) (2019), 3145--3178.

\bibitem{BBST99}
O. Ben-Bassat, R.S. Strichartz, A. Teplyaev, \emph{What is not in the domain of the Laplacian on a Sierpinski gasket type fractal}, J. Funct. Anal. {\bf 166} (1999), 197--217.

\bibitem{BH91}
N. Bouleau, F. Hirsch, \emph{Dirichlet Forms and Analysis on Wiener Space},
deGruyter Studies in Math. 14, deGruyter, Berlin, 1991.

\bibitem{CHT18}
J.P. Chen, M. Hinz, A. Teplyaev, \emph{From non-symmetric particle systems to non-linear PDEs on fractals}, in 'Stochastic Partial Differential Equations and Related Fields - In Honor of Michael R\"ockner, SPDERF, Bielefeld, Germany, October 2016', Springer Proc. in Math. and Stat. {\bf 229}, Springer Intl. Publ., 2018, pp. 503--513.


\bibitem{CS03}
F. Cipriani, J.-L. Sauvageot, \emph{Derivations as square roots of Dirichlet forms},
J. Funct. Anal. {\bf201} (2003), 78--120.

\bibitem{CS09}
F. Cipriani, J.-L. Sauvageot, \emph{Fredholm modules on p.c.f. self-similar fractals and their conformal geometry},
Comm. Math. Phys. {\bf286} (2009), 541--558.

\bibitem{Croydon}
D. Croydon, \emph{Scaling limits of stochastic processes associated with resistance forms}, Ann. Inst. H. Poincaré Probab. Statist. {\bf 54}(4) (2018), 1939--1968.

\bibitem{Eb99}
A. Eberle, \emph{Uniqueness and non-uniqueness of semigroups generated by singular diffusion operators}, Lect. Notes Math. {\bf 1718}, Springer, New York, 1999.

\bibitem{EngelNagel}
K.-J. Engel, R. Nagel, \emph{One-Parameter Semigroups for Evolution Equations},
Graduate Texts in Math. 194, Springer, New York, 2000.

\bibitem{Evans}
L.C. Evans, \emph{Partial Differential Equations}, Graduate Studies in Mathematics, Vol. 19, Amer. Math. Soc., Providence, Rhode Island, 1998.

\bibitem{ExnerPost09}
P. Exner, O. Post, \emph{Approximation of quantum graph vertex couplings by scaled Schr\"odinger operators on thin branched manifolds}, J. Phys. A {\bf 42} (2009), 415305, 22.


\bibitem{FaHu99}
K.J. Falconer, J. Hu, \emph{Non-linear elliptical equations on the Sierpi\'nski gasket}, J. Math. Anal. Appl. {\bf 240} (2) (1999), 552--573. 

\bibitem{FaHu01}
K.J. Falconer, J. Hu, \emph{Nonlinear diffusion equations on unbounded fractal domains}, J. Math. Anal. Appl. {\bf 256} (2), 606--624.

\bibitem{FiKu04}
P.J. Fitzsimmons, K. Kuwae, \emph{Non-symmetric perturbations of symmetric
Dirichlet forms}, J. Funct. Anal. {\bf 208} (2004), 140--162.

\bibitem{FOT94}
M. Fukushima, Y. Oshima and M. Takeda, \emph{Dirichlet forms and symmetric Markov processes},
deGruyter, Berlin, New York, 1994.

\bibitem{FKW07}
S.A. Fulling, P. Kuchment, J.H. Wilson, \emph{Index theorems for quantum graphs}, J. Phys. A: Math. Theor. {\bf 40} (2007), 14165--14180.

\bibitem{Gigli15}
N. Gigli, \emph{On the differential structure of metric measure spaces and applications}, Mem. Amer. Math. Soc. {\bf 236} (1113) (2015).

\bibitem{Gigli17}
N. Gigli, \emph{Non-smooth differential geometry - an approach tailored for spaces with Ricci curvature bounded from below}, Mem. Amer. Math. Soc. {\bf 251} (11) (2017). 


\bibitem{GT10}
D. Gilbarg, N. Trudinger, \emph{Partial Differential Equations of Second Order},
Springer, New York, 1998.

\bibitem{Grigoryan}
A. Grigor'yan, \emph{Introduction to Analysis on Graphs}, University Lecture Series 71,  Amer. Math. Soc., 2018.

\bibitem{Hambly92}
B.M. Hambly, \emph{Brownian motion on a homogeneous random fractal}, Probab. Theory Rel. Fields {\bf 94} (1992), 1--38.

\bibitem{Hambly97}
B.M. Hambly, \emph{Brownian motion on a random recursive Sierpinski gasket}, Ann. Probab. {\bf 25} (3) (1997), 1059--1102.

\bibitem{HamblyKumagai}
B.M. Hambly, T. Kumagai, \emph{Diffusion on the scaling limit of the critical percolation cluster in the diamond hierarchical lattice}, Commun. Math. Phys. {\bf 295}(1) (2010), 29--69.

\bibitem{HamblyNyberg03}
B. Hambly, S. Nyberg, \emph{Finitely ramified graph-directed fractals, spectral asymptotics and the multidimensional
renewal theorem}, Proc. Edinb. Math. Soc. {\bf 46}(2) (2003), 1--34.

\bibitem{Hei01} 
J. Heinonen,  \emph{Lectures on analysis on metric spaces.}\, Universitext. Springer-Verlag, New York, 2001.

\bibitem{Hino98}
M. Hino, \emph{Convergence of non-symmetric forms}, J. Math. Kyoto Univ. {\bf 38}(2) (1998), 329--341.


\bibitem{Hino03}
M. Hino, \emph{On singularity of energy measures on self-similar sets},
Probab. Theory Relat. Fields {\bf132} (2005), 265--290.


\bibitem{Hino08}
M. Hino, \emph{Martingale dimension for fractals},
Ann. of Probab. {\bf 36}(3) (2008), 971--991

\bibitem{Hino10}
M. Hino, \emph{Energy measures and indices of Dirichlet forms, with applications to derivatives on some fractals},
Proc. London Math. Soc. {\bf 100}  (2010), 269--302.


\bibitem{HinoKumagai06}
M. Hino, T. Kumagai, \emph{A trace theorem for Dirichlet forms on fractals}, J. Funct. Anal. {\bf 238} (2006), 578--611.

\bibitem{Hino05}
M. Hino, K. Nakahara, \emph{On singularity of energy measures on self-similar sets II},
Bull. London Math. Soc. {\bf 38} (2006), 1019--1032.

\bibitem{Hinz09}
M. Hinz, \emph{Approximation of jump processes on fractals}, Osaka J. Math. {\bf 46} (2009), 141--171.

\bibitem{Hinz15}
M. Hinz, \emph{Magnetic energies and Feynman-Kac-It\^o formulas for symmetric Markov processes}, Stoch. Anal. Appl. {\bf 33}(6) (2015), 1020--1049. 

\bibitem{Hinz16}
M. Hinz, \emph{Sup-norm closable bilinear forms and Lagrangians}, 
Ann. Mat. Pura Appl. {\bf 195}(4) (2016), 1021--1054.


\bibitem{HinzKochMeinert}
M. Hinz, D. Koch, M. Meinert, \emph{Sobolev spaces and calculus of variations on fractals}, in: Analysis, Probability and Mathematical Physics on Fractals, World Scientific, 2020, pp. 419--450.

\bibitem{HinzMeinert19+}
M. Hinz, M. Meinert, \emph{On the viscous Burgers equation on metric graphs and fractals}, J. Fractal Geometry {\bf 7}(2) (2020), 137--182. 

\bibitem{HR16}
M. Hinz, L. Rogers, \emph{Magnetic fields on resistance spaces},  J. Fractal Geometry {\bf 3} (2016),  75--93.

\bibitem{HRT13}
M. Hinz, M. R\"ockner, A. Teplyaev, \emph{Vector analysis for  Dirichlet forms and quasilinear PDE and SPDE on metric measure spaces}, Stoch. Proc. Appl. {\bf 123}(12) (2013), 4373--4406.

\bibitem{HT13}
M. Hinz, A. Teplyaev, \emph{Dirac and magnetic Schr\"odinger operators on fractals},
J. Funct. Anal. {\bf 265}(11) (2013), 2830--2854.

\bibitem{HTams}
M. Hinz,   A. Teplyaev, \emph{Local Dirichlet forms, Hodge theory, and
the Navier-Stokes equations on topologically one-dimensional
fractals}, Trans. Amer. Math. Soc. {\bf 367}  (2015),
1347--1380, Corrigendum in Trans. Amer. Math. Soc. {\bf 369} (2017), 6777--6778.

\bibitem{HT-fgs5}
M. Hinz, A. Teplyaev, \emph{Finite energy coordinates and vector analysis on fractals}, In: Fractal Geometry and Stochastics V, Progress in Probability, vol. {\bf 70},  Birkh\"auser, 2015, pp. 209--227.



\bibitem{HinzTeplyaev15}
M. Hinz,  A. Teplyaev, \emph{Closability, regularity, and  approximation  by  graphs  for  separable bilinear forms}, Zap. Nauchn. Sem. S.-Peterburg. Otdel. Mat. Inst. Steklov. (POMI) {\bf 441} (2015), 299--317. Reprint: J. Math. Sci. {\bf 219}(5) (2016), 807--820.


\bibitem{IRT12}
M. Ionescu, L. Rogers, A. Teplyaev, \emph{Derivations, Dirichlet forms and spectral analysis},
J. Funct. Anal. {\bf 263}(8) (2012), 2141--2169. 


\bibitem{Ka12}
N. Kajino, \emph{Heat kernel asymptotics for the measurable Riemannian structure on the Sierpinski gasket},
Pot. Anal. {\bf 36} (2012), 67--115.


\bibitem{Kasue02}
A. Kasue, \emph{Convergence of Riemannian manifolds and Laplace operators I}, Ann. Inst. Fourier {\bf 52} (2002), 1219--1257.

\bibitem{Kasue06}
A. Kasue, \emph{Convergence of Riemannian manifolds and Laplace operators II}, Pot. Anal. {\bf 24} (2006), 137--194.

\bibitem{Kato80}
T. Kato, \emph{Perturbation Theory for Linear Operators}, Springer, New York, 1980.

\bibitem{KLWbook}
M. Keller, D. Lenz, R. K. Wojciechowski, \emph{Graphs and discrete Dirichlet spaces}, Springer, to appear.

\bibitem{Ki89}
J. Kigami, \emph{A harmonic calculus on the Sierpinski space}, Japan J. Appl.
Math. {\bf 6} (1989), 259–290.

\bibitem{Ki93}
J. Kigami, \emph{Harmonic calculus on p.c.f. self-similar sets}, Trans. Amer. Math. Soc. {\bf 335} (1993), 721--755.

\bibitem{Ki93b} J. Kigami,
\emph{Harmonic metric and Dirichlet form on the Sierpi\'nski gasket},
Asymptotic problems in probability theory: stochastic models and
diffusions on fractals (Sanda/Kyoto, 1990), 201--218,
Pitman Res. Notes Math. Ser., \textbf{283},
Longman Sci. Tech., Harlow, 1993.

\bibitem{Ki01}
J. Kigami, \emph{Analysis on Fractals}, Cambridge Univ. Press, Cambridge, 2001.

\bibitem{Ki03}
J. Kigami, \emph{Harmonic analysis for resistance forms}, J. Funct. Anal. {\bf 204} (2003), 525--544.


\bibitem{Ki08}
J. Kigami, \emph{Measurable Riemannian geometry on the Sierpinski gasket: the Kusuoka measure and the Gaussian heat kernel estimate}, Math. Ann. {\bf 340} (2008), 781--804.


\bibitem{Ki12}
J. Kigami, \emph{Resistance forms, quasisymmetric maps and heat kernel estimates},
Mem. Amer. Math. Soc. {\bf 216} (2012), no. 1015.


\bibitem{Kolesnikov05}
A.V. Kolesnikov, \emph{Convergence of Dirichlet forms with changing speed measures on $\mathbb{R}^d$}, Forum Math. {\bf 17}(2) (2005), 225--259.

\bibitem{Kolesnikov06}
A.V. Kolesnikov, \emph{Mosco convergence of Dirichlet forms in infinite dimensions with
changing reference measures}, J. Func. Anal. {\bf 230}(2) (2006), 382--418.


\bibitem{KS99}
V. Kostrykin, R. Schrader, \emph{Kirchhoff's rule for quantum wires}, 
J. Phys. A {\bf 32} (1999), 595--630.

\bibitem{KS00}
V. Kostrykin, R. Schrader, \emph{Kirchhoff's rule for quantum wires. II: The inverse problem with possible applications to quantum computers}, 
Fortschr. Phys. {\bf 48} (2000), 703--716.

\bibitem{Ku04}
P. Kuchment, \emph{Quantum graphs I. Some basic structures}, Waves in Random media {\bf 14} (2004), 107--128.

\bibitem{Ku05}
P. Kuchment, \emph{Quantum graphs II. Some spectral properties of quantum and combinatorial graphs},
J. Phys. A: Math. Theory {\bf 38} (2005), 4887--4900.


\bibitem{KumagaiSturm05}
T. Kumagai, K.T. Sturm: \emph{Construction of diffusion processes on fractals, $d$-sets and general metric measure spaces}, J. Math. Kyoto Univ. {\bf 45} (2) (2005), 307--327.


\bibitem{Ku89}
S. Kusuoka, \emph{Dirichlet forms on fractals and products of random matrices}, Publ. Res. Inst. Math. Sci. {\bf 25} (1989), 659--680.

\bibitem{Ku93} S. Kusuoka,
\emph{Lecture on diffusion process on nested fractals.}\
Lecture Notes in Math. \textbf{1567} 39--98,
Springer-Verlag, Berlin, 1993.



\bibitem{KuwaeShioya03}
K. Kuwae and T. Shioya: \textit{Convergence of spectral structures},
Comm. Anal. Geom. \bf11 \normalfont (4) (2003), 599--673. 

\bibitem{LiuQian17}
X. Liu, Zh. Qian, \emph{Parabolic type equations associated with the Dirichlet form on the Sierpinski gasket}, Probab. Theory Relat. Fields {\bf 175} (2019), 1063--1098. 

\bibitem{MaRoeckner92}
Z.-M. Ma and M. R{\"o}ckner, \textit{Introduction to the Theory of (Nonsymmetric) Dirichlet Forms}, Universitext.
Springer, Berlin (1992).

\bibitem{Meyers}
R. Meyers, R.S. Strichartz, A. Teplyaev, \emph{Dirichlet forms on the Sierpinski gasket}, Pacific
J. Math. {\bf 217}(2004), 149--174.


\bibitem{Mosco94}
U. Mosco, \emph{Composite media and asymptotic Dirichlet forms},
J. Funct. Analysis, {\bf 123} (1994), 368--421.


\bibitem{Mugnolo14}
D. Mugnolo, \emph{Semigroup Methods for Evolution Equations on Networks}, Understanding Complex Systems, 
Springer International Publishing Switzerland, 2014.


\bibitem{MugnoloNittkaPost13}
D. Mugnolo, R. Nittka, O. Post, \emph{Norm convergence of sectorial operators on varying Hilbert spaces},
Operators and Matr. {\bf 7}(4) (2013), 955--995.


\bibitem{P83}
A. Pazy, \emph{Semigroups of Linear Operators and Applications to Partial Differential Equations}, Springer, New York, 1983.

\bibitem{Post06}
O. Post, \emph{Spectral convergence of quasi-one-dimensional spaces}, Ann. Henri Poincar\'e {\bf 7}(2006), 933--973.

\bibitem{Post12}
O. Post, \emph{Spectral Analysis on Graph-like Spaces}, Lect. Notes Math. {\bf 2039}, Springer, Berlin, 2012.

\bibitem{PostSimmer17}
O. Post, J. Simmer, \emph{Approximation of fractals by discrete graphs: norm resolvent and spectral convergence},
J. Integr. Equ. Oper. Theory (2018) 90:68.  

\bibitem{PostSimmer18}
O. Post, J. Simmer, \emph{Approximation of fractals by manifolds and other graph-like spaces},
preprint (2018), arXiv:1802.02998.

\bibitem{RS80}
M. Reed, B. Simon, \emph{Methods of Modern Mathematical Physics I: Functional Analysis}, Academic Press, San Diego, 1980.

\bibitem{RogersTeplyaev10}
L. Rogers, A. Teplyaev, \emph{Laplacians on the basilica Julia set}, Commun. Pure Appl. Anal. {\bf 9}(2010), 211--231. 

\bibitem{Steinhurst}
B. Steinhurst, \emph{Uniqueness of locally symmetric Brownian motion on Laakso spaces}, Pot. Anal. {\bf 38}(1) (2013), 281--298.

\bibitem{Str03}
R.S. Strichartz, \emph{Fractafolds based on the Sierpinski gasket and their spectra}, Trans. Amer. Math. Soc. {\bf 355} (2003),
4019--4043.


\bibitem{Str06}
R.S. Strichartz, \emph{Differential Equations on Fractals: A Tutorial}, Princeton Univ. Press, Princeton 2006.

\bibitem{Str16}
R.S. Strichartz, \emph{'Graph paper' trace characterizations of functions of finite energy}, J. d'Anal. Math. {\bf 128} (2016), 239--260.

\bibitem{StrU00}
R.S. Strichartz, M. Usher, \emph{Splines on fractals}, Math. Proc. Camb. Phil. Soc. {\bf 129} (2000), 331--360.


\bibitem{Suzuki18}
K. Suzuki, \emph{Convergence of non-symmetric diffusion processes on RCD spaces},
Calc. Var. (2018) 57:120.


\bibitem{T08}
A. Teplyaev, \emph{Harmonic coordinates on fractals with finitely ramified cell structure}, Canad. J. Math. {\bf 60} (2008), 457--480.


\bibitem{Toelle06}
J. T\"olle, \emph{Convergence of non-symmetric forms with changing reference measures}, Diploma thesis, Bielefeld University, 2006.

\bibitem{Toelle10}
J. T\"olle, \emph{Variational convergence of nonlinear partial differential operators on varying Banach spaces},
Ph.D. thesis, Bielefeld University, 2010.



\bibitem{W00}
N. Weaver, \emph{Lipschitz algebras and derivations II. Exterior differentiation}, J. Funct. Anal. {\bf 178} (2000), 64-112.

\end{thebibliography}
\end{document}